             \renewcommand{\phi}{\varphi}
             \newcommand{\eps}{\ensuremath{\epsilon}}
             \newcommand{\bB}{\ensuremath{\mathcal{B}}}
             \newcommand{\fF}{\ensuremath{\mathscr{F}}}
             \newcommand{\tT}{\ensuremath{\mathcal{T}}}
             \newcommand{\PP}{\ensuremath{\mathbb{P}}}
             \newcommand{\EE}{\ensuremath{\mathbb{E}}}
             \newcommand{\RR}{\ensuremath{\mathbb{R}}}
             \newcommand{\NN}{\ensuremath{\mathbb{N}}}
             \newcommand{\ltn}{\ensuremath{\left| \! \left| \! \left|}}
             \newcommand{\rtn}{\ensuremath{\right| \! \right| \! \right|}}
\crefname{hypothesis}{Hypothesis}{Hypotheses}
\title{Almost Sure Averaging for Evolution Equations driven by fractional Brownian motions\thanks{Submitted to the editors 02.20.2023
		\funding{B. Pei was partially supported by National Natural Science Foundation of China (NSFC) under Grant No.12172285, Guangdong Basic and Applied Basic Research Foundation under Grant No.2214050001158, and Fundamental
Research Funds for the Central Universities. Y. Xu was partially supported by Key International (Regional) Joint Research Program of NSFC under Grant No.12120101002. B. Pei thanks the Alexander von Humboldt Foundation (Germany) for support.
	}}}
\author{B. Pei\thanks{School of Mathematics and Statistics, Northwestern Polytechnical University, 127 West Youyi Road, 710072, Xi'an, China and Institut f\"{u}r  Stochastik,
		Friedrich Schiller Universit{\"a}t, Jena, Ernst Abbe Platz 2, D-77043, Jena, Germany
		(\email{binpei@nwpu.edu.cn}).}
	\and B. Schmalfuss \thanks{Institut f\"{u}r   Stochastik,
		Friedrich Schiller Universit{\"a}t, Jena, Ernst Abbe Platz 2, D-77043,Jena, Germany
		(\email{bjoern.schmalfuss@uni-jena.de}).}
	\and Y. Xu\thanks{School of Mathematics and Statistics, Northwestern Polytechnical University, 127 West Youyi Road, 710072, Xi'an, China
		(\email{hsux3@nwpu.edu.cn}).}}
\begin{document}

             \maketitle

             \begin{abstract}
                  We apply the averaging method to a coupled system
consisting of two
             evolution equations which has a slow component driven by
fractional
             Brownian motion (FBM) with the Hurst parameter $H_1>
\frac12$ and a fast
             component driven by additive FBM with the Hurst parameter
$ H_2\in(1-H_1,1)$.
             The main purpose is to show
             that the slow component of such a couple system can be
described by a
             stochastic evolution equation with averaged coefficients.
Our first
             result provides a pathwise mild solution for the system of
mixed
             stochastic evolution equations. Our main result deals with
an averaging
             procedure which proves that the slow component converges
almost surely
             to the solution of the corresponding averaged equation
using the
             approach of time discretization. To do this we generate a
stationary
             solution by a exponentially attracting random fixed point
of the random
             dynamical system generated by the fast component.
             \end{abstract}

             \begin{keywords}
             Almost sure averaging, Random fixed points, Fractional Brownian motion, Slow-fast systems
             \end{keywords}

             \begin{MSCcodes}
             60G22, 60H05, 60H15, 34C29.
             \end{MSCcodes}
             \section{Introduction}\label{s1}
             The aim of this article is to address the almost sure
averaging for the
             problem of slow-fast stochastic evolution equation in
a separable
             Hilbert space $V$
             \begin{align}
                  \label{eq-org-x}
                  dX^\eps(t)=&AX^\eps(t)\,dt+
             f(X^\eps(t),Y^\eps(t))\,dt+h(X^\eps(t))\,dB^{H_1}(t),
X^\eps(0)=X_0 \in
             V,  \\
                  \label{eq-org-y}
                  dY^\eps(t)= & \frac{1}{\eps}BY^\eps(t)
             \,dt+\frac{1}{\eps}g(X^\eps(t),Y^\eps(t))\,dt
+dB^{H_2}(t/\eps),
             Y^\eps(0)=Y_0
             \in V
             \end{align}
             where $0<\eps\ll 1$ is a small parameter, $X^\eps(t) \in V$ and
             $Y^\eps(t) \in V$ are the state variables,
$B^{H_1},B^{H_2}$ are the
             trace class
             $V$-valued fractional Brownian motions (FBMs) with $ H_1
             \in(1/2, 1),$
             $ H_2\in(1-H_1,1)$, $f,h,g$ are sufficiently
             regular.

Very often a complex system can be viewed as a combination
of slow and
fast motions \cite{freidlin2012random,kuehn2015multiple}, see (\ref{eq-org-x})-(\ref{eq-org-y}) for
instance, which
leads to two widely separated time scales equations and is
extremely
difficult to analyze directly. It is highly desirable to
obtain a
simplified equation capturing the dynamics of the system at
the slow
time scale. Averaging plays an important role to extract
effective
macroscopic dynamics which can describe approximately the
slow motion
(see \cite{bogolyubov1955asymptotic,volosov1962averaging} for the
deterministic case and
\cite{duan2014effective,engel2021homogenization,freidlin2012random,khasminskii1968on} for the
stochastic case). It is worth mentioning that this idea was
exploited in
             atmospheric science where, for instance, in the study of
the climate
             variability by Hasselmann \cite{hasselmann1976stochastic} who got the
Nobel Prize
             in Physics 2021 applying the stochastic averaging framework
considering
             climate and weather as slow and fast motions, respectively.
Later,
             Arnold \cite{arnold2001hasselmann} has recast Hasselmann's program of
reducing
             complex deterministic multiscale climate models to simpler
stochastic
             models for the slow variables. Kifer \cite{kifer2001averaging}
contained a short
             survey of stochastic averaging methods for random dynamical
system (RDS)
             with application to climate models. Eichinger et al. \cite{eichinger2020sample} studied the sample paths estimates for stochastic fast-slow systems
driven by FBM and also illustrated their results in an example arising in
climate modeling.

             The theory of stochastic averaging has a popular history
which can be traced back to the work of Khasminskii \cite{khasminskii1968on}.
We here mention only a few relevant references. Freidlin and Wentzell
\cite{freidlin2012random} provided a mathematically rigorous overview of
fundamental stochastic averaging procedures. Pavilotis and Stuart
\cite{pavliotis2008multiscale} covered stochastic averaging and homogenization
results obtained by perturbation analysis, see, e.g.
\cite{givon2007strong,liu2020averaging,xu2011averaging,xu2015stochastic,xu2017stochastic,pei2017stochastic}
             (and the references therein) for further generalizations.
Most of the known results in the literature mainly considered the case of
             perturbation by Brownian motion (BM). While slow-fast
systems with FBM have seen a tremendous spike of interest in the last
two years
\cite{hairer2019averaging,li2022slow,li2022mild,pei2020averaging,pei2020pathwise,alexandra2021rough}.
             We mention only the most relevant result in our case here.
Hairer and Li
             \cite{hairer2019averaging} considered slow-fast systems
where the slow
             system is driven by FBM and proved the convergence to the
averaged
             solution took place in probability.  Pei, Inahama and Xu
             \cite{pei2020averaging,pei2021averaging} answered
affirmatively
             that an averaging principle still holds for fast-slow mixed
stochastic
             differential equations (SDEs) if disturbances involve both
BM and FBM
             $H\in(1/3,1)$ in the mean square sense. The result
             \cite{pei2020averaging} was extended in
\cite{pei2020pathwise}
             to establish an averaging principle in the mean square
sense for
             stochastic partial differential equation (SPDEs) driven by FBM
             with an additional fast-varying diffusion process.

One naturally wonders what happens to the
averaging principle of this type when the driving noise of fast motion
does not have (semi) martingale property.  A first attempt to study the
long time behaviour of SDEs driven by FBM was made by Hairer
\cite{hairer2005ergodicity}. The approach is closer to the usual
Markovian semigroup approach to invariant measures than the theory of
RDS. Note that FBM does not define the Markov process as in the case of
usual BM. Therefore, it is not possible to apply standard methods to
show existence of stationary solutions. This research direction is
fairly new and there are not many papers at the moment. Li and Sieber
\cite{li2022slow} established a quantitative quenched ergodic theorem on
             the conditional evolution of the process of the fast
dynamics with frozen slow
             input and proved a fractional averaging principle for
interacting slow-fast
             systems including a non-Markovian fast dynamics in
probability. However, we will show that the fast motion is shown to
define RDS which has the unique, exponentially attracting random fixed point
and our intention
             in this article is to study the almost sure averaging
replacing the
             invariant measures by the random fixed points of fast
motion which are
             pathwise exponentially attracting.  Note that the almost
sure averaging for evolution equations driven by two FBMs
             is still an open problem in both finite dimensional and
infinite
             dimensional state spaces. The key idea to solve the problem is to replace the stationary solution of a Markov semigroup by the attracting random fixed point of a RDS, which can exist in the case of non-white noise.

             To investigate the almost sure averaging of system
             (\ref{eq-org-x})-(\ref{eq-org-y}), it is essential to
obtain the unique
             solution. Another simple understanding on
             (\ref{eq-org-x})-(\ref{eq-org-y}) is to rewrite this system
in the
             following form
             \begin{align*}
                  \bigg(\begin{array}{c}{dX^\eps(t)} \\
             {dY^\eps(t)}\end{array}\bigg)=&\bigg(
                  \begin{matrix}
                      A & O \\
                      O & \frac{1}{\eps}B
                  \end{matrix}
                  \bigg)\bigg(\begin{array}{c}{X^\eps(t)} \\
{Y^\eps(t)}\end{array}\bigg)dt+\bigg(\begin{array}{c}{f(X^\eps(t),Y^\eps(t))}
             \\
{\frac{1}{\eps}g(X^\eps(t),Y^\eps(t))}\end{array}\bigg)dt\\&+\bigg(
                  \begin{matrix}
                      h(X^\eps(t)) & O \\
                      O & {\rm id}
                  \end{matrix}
                  \bigg)\bigg(\begin{array}{c}{dB^{H_1}(t)} \\
             {dB^{H_2}(t/\eps)}\end{array}\bigg).
             \end{align*}
             Existence and uniqueness of solution to this kind of
equation have been
             established, for instance, Maslowski and Nualart
             \cite{maslowski2003evolution}, Garrido-Atienza, et.al.
             \cite{garrido2010random}, Chen,  et.al.
\cite{chen2013pathwise} and
             Pei, et.al. \cite{pei2020pathwise}. In this last paper,
the authors
             were able to overcome the lack of the regularity relying on
a pathwise
             approach, a stopping time technique and an approximation
for the
             fractional noise. However, our technique differs
qualitatively from the method
             mentioned above.

             The paper is organized as follows. In Section 2 we
formulate the basic
             properties of RDE and of stochastic integrals with respect
to the FBM
             that are used in the paper. Section 3 contains the
existence and
             uniqueness of a pathwise mild solution to the nonlinear
             infinite-dimensional evolution equations.  In Section
4, we prove
             that almost sure averaging for evolution equations
             (\ref{eq-org-x})-(\ref{eq-org-y}). The random fixed points
for the RDS
             generated by (\ref{eq-org-y}) are concluded in Section 4.2.

             \section{Preliminaries on the random perturbations and pathwise
             stochastic integrals}\label{s2}
             In this section we review some basic concepts of pathwise
stochastic
             integrals that will be used later.
             \subsection{Random perturbations}\label{ss2.1}
             Let $(V, ( \cdot,\cdot))$ be a separable Hilbert space and
its norm is
             denoted by $ \|\cdot\|$. Let $C([T_1,T_2];V)$ be the space
of continuous functions on
             $[T_1,T_2]$ with values in $V$ equipped with the usual norm
             \[
\|u\|_{\infty}=\|u\|_{\infty,T_1,T_2}=\sup_{s\in[T_1,T_2]}\|u(s)\|.
             \]
             We consider a $V$-valued continuous trace class FBM on
some interval $[0,T]$
             denoted $B^{H_1}$ with Hurst parameter $H_1\in (1/2,1)$.
             The distribution $\PP_{H_1}$ of this process is determined
by  the covariance
             \begin{equation}\label{eq15}
             \EE[B^{H_1}(t)\otimes
B^{H_1}(s)]=Q_1(|t|^{2H_1}+|s|^{2H_1}-|t-s|^{2H_1}),\quad \EE [B^{H_1}(t)]=0
             \end{equation}
             where $\PP_{H_1}$ is defined on $\bB(C_0([0,T];V))$ the
             Borel-$\sigma$-algebra of the space of continuous functions
$C_0([0,T];V)$ which are zero at zero.
             $Q_1$ be a symmetric positive operator of finite trace.
             In addition, for $H_2$ in $(1-H_1,1)$ let $B^{H_2}$ be a
continuous trace
             class FBM on $\RR$ with
             distribution $\PP_{H_2}$  defined on $\bB(C_0(\RR;V))$
where  $C_0(\RR;V)$ is the
             set of continuous paths on $\RR$ with values in $V$ which
are zero at
             zero equipped with the compact open topology. The
covariance of this stochastic process can be defined by
             (\ref{eq15})
             with a trace class operator $Q_2$ and replace $H_1$ by $H_2$.
             We assume that $B^{H_1}$ and $B^{H_2}$ are independent.

             \smallskip

             We define $C^\beta([T_1,T_2];V)$ to be the Banach space of
             H{\"o}lder continuous functions on $[T_1,T_2]$ with
exponent $0<\beta<1$
             having values in $V$. A norm  of this space is given by
             \begin{eqnarray*}
\|u\|_{\beta}=\|u\|_{\beta,T_1,T_2}=\|u\|_{\infty,T_1,T_2}+\ltn u
             \rtn_{\beta,T_1,T_2}
             \end{eqnarray*}
             with
             $$
             \ltn u \rtn_{\beta}=\ltn u
\rtn_{\beta,T_1,T_2}=\sup_{T_1\le s<t\le
             T_2}\frac{\|u(t)-u(s)\|}{|t-s|^\beta}.$$
             By Kolmogorov's theorem we know that  $B^{H_1}$ has a version
             $C^{\beta}([0,T];V)$ where we assume that $\beta \in
             (1/2,H_1)$. $B^{H_2}$ has   a version so that on
             any interval $[T_1,T_2],\,T_1<T_2$ we have
$B^{H_2}|_{[T_1,T_2]}\in
             C^{\gamma^\prime} ([T_1,T_2];V)$ where $\gamma^\prime<H_2$ and
             \begin{equation}\label{eq3}
             H_1>1/2,\quad H_2\in(1-H_1,1).
             \end{equation}

             Let us consider canonical versions of $B^{H_1}$ and $B^{H_2}$:
             \[
             B^{H_1}(\omega_1)=\omega_1,\quad \omega_1\in C_0([0,T];V),\quad
             B^{H_2}(\omega_2)=\omega_2,\quad \,\omega_2\in C_0(\RR;V).
             \]

             Taking into account that we have H{\"o}lder continuous
versions for
             $B^{H_1}$ and $B^{H_2}$ we describe the canonical versions
as follows:
             Let
             \[
             \Omega_1=C_0([0,T];V)\cap C^{\beta}([0,T];V),\quad
             \Omega_2=C_0(\RR;V)\cap C^{\gamma^\prime}(\RR;V)
             \]
             Then the canonical processes are given by
             \[
             (\Omega_1,\bB(C_0([0,T];V))\cap
\Omega_1,\PP_{H_1}^\prime),\quad
             (\Omega_2,\bB(C_0(\RR;V))\cap \Omega_2,\PP_{H_2}^\prime)
             \]
             where $\PP_{H_1}^\prime$ now stands for
$\PP_{H_1}(\cdot\cap \Omega_1)$
             and similar for $\PP_{H_2}^\prime$.

             We consider now the metric dynamical system (MDS)
         $$(C(\RR,V),\bB(C(\RR,V)),\PP_{H_2},\theta)$$ with the
measurable flow
             $\theta=(\theta_t)_{t\in\RR}$ given by the shift operators
$\theta_t\omega_2(\cdot)=\omega_2(\cdot+t)-\omega_2(t)$, see Arnold
            \cite[p. 546]{arnold1998random}.  The measure
             $\PP_{H_2} $ is ergodic with respect to
$(\theta_t)_{t\in\RR}$. For
             details we refer for instance to \cite{garrido2022setvalued}. Since $
             \Omega_2$ is
             $(\theta_t)_{t\in\RR}$-invariant and has full measure that
we can conclude
             $(\Omega_2,\bB(C_0(\RR,V))\cap
\Omega_2,\PP_{H_2}^\prime,\theta)$  is
             also ergodic.
             \smallskip

             Introduce the product measure
             $\mathbb{P}:=\mathbb{P}_{H_1}\times\mathbb{P}_{H_2}$
             on
             \begin{align*}
                  (C_0([0,T];V)&\times
C_0(\mathbb{R};V),\mathscr{B}(C_0([0,T];V))\otimes\mathscr{B}(C_0(\mathbb{R};V)),\mathbb{P})\\
             &=
                  (C_0([0,T];V)\times
             C_0(\mathbb{R};V),\mathscr{B}(C_0([0,T];V)\times
             C_0(\mathbb{R};V)),\mathbb{P}).
             \end{align*}

             We  set $\Omega:=\Omega_1 \times \Omega_2$ and
             \[
             \fF=\mathscr{B}(C_0([0,T];V)\times
C_0(\mathbb{R};V))\cap \Omega
             \]
             being the trace $\sigma$-algebra w.r.t. $\Omega$ equipped
with the
             probability measure $\PP^\prime
             (\cdot)=\PP(\cdot\cap\Omega)=\,\PP_{H_1}^\prime \times
\PP_{H_2}^\prime$.
             Let us denote for the following the measures
             $\PP^\prime,\,\PP_{H_1}^\prime,\,\PP_{H_2}^\prime$  by
             $\PP,\,\PP_{H_1},\,\PP_{H_2}$. Then we can describe the
H{\"o}lder
             continuous and canonical  version of $(B^{H_1},B^{H_2})$
by the
             probability space
             $(\Omega,\fF,\PP)$ with paths
$\omega=(\omega_1,\omega_2)\in \Omega$.

             \subsection{Pathwise stochastic integrals}
             Although this construction has already been done in the
recent paper
             \cite{chen2013pathwise} (see also Maslowski et al.
             \cite{maslowski2003evolution}), we present it here for the
sake of
             completeness.
             We begin this subsection by introducing some function spaces.
             Let
             $C^{\beta,\sim}([T_1,T_2];V) \subset C([T_1,T_2];V)$ be the
set of
             functions with the finite norm
             \begin{equation*}
\|u\|_{\beta,\sim}=\|u\|_{\beta,\sim,T_1,T_2}=\|u\|_{\infty,T_1,T_2}+\sup_{T_1<
             s<t\le T_2}(s-T_1)^\beta\frac{\|u(t)-u(s)\|}{|t-s|^\beta}.
             \end{equation*}
             For  $\rho>0$ we can consider the equivalent norm
             \begin{eqnarray*}
\|u\|_{\beta,\rho,\sim}=\|u\|_{\beta,\rho,\sim,T_1,T_2}&=&\sup_{s\in[T_1,T_2]}e^{-\rho(s-T_1)}\|u(s)\|\cr
                  &&+\sup_{T_1< s<t\le T_2}(s-T_1)^\beta
             e^{-\rho(t-T_1)}\frac{\|u(t)-u(s)\|}{|t-s|^\beta}.
             \end{eqnarray*}
             It is known that  $C^{\beta,\sim}([T_1,T_2];V)$ is a Banach
space, see
             \cite{chen2013pathwise}, Lunardi
\cite[p.123]{lunardi2012analytic}.

      \smallskip
             \quad Now we want to define the stochastic integral with
             $\omega_1\in\Omega_1$ as integrator. The definition that we use
             throughout this article is given by Z\"ahle \cite{zahle1998integration}
generalized
             to infinite dimensional case in Chen et al.
\cite{chen2013pathwise}.
             Consider the separable Hilbert space $L_2(V)$ of Hilbert
Schmidt
             operators from $V$ into $V$ with the usual
norm$\|{\cdot}\|_{L_2(V)}$
             and inner product $(\cdot,\cdot)_{L_2(V)}.$ A base
             $(E_{ji})_{j,i\in\NN}$ in this space is given by
             \begin{eqnarray*}
                  E_{ji}e_k=\bigg\{\begin{aligned} 0:i &\ne k,\\
                      e_j , i&=k
                  \end{aligned}
             \end{eqnarray*}
             where $(e_k)_{k\in\NN}$ is a complete orthonormal system in
$V$.
             Consider the mapping $\Psi:[0,T]\to L_2(V)$ and suppose that
             $\psi_{ji}:=(\Psi(\cdot),E_{ji})_{L_2(V)}\in
             I_{T_1+}^\alpha(L^p((T_1,T_2);\mathbb{R}))$ and
$\psi_{ji}(T_1+)$ (the
             right-side limit of $\psi_{ji}$ at $T_1$)  exists and
$\alpha p<1$.
             Moreover, assume that
$\zeta_{iT_2-}:=(\omega_{1,T_2-}(t),e_i)_V\in
             I_{T_2-}^{1-\alpha}(L^{p'}((T_1,T_2);\mathbb{R}))$ such that
             $\frac{1}{p}+\frac{1}{p'}\le 1$, and the mapping
             \begin{eqnarray*}
                  [T_1,T_2]\ni r\mapsto\| D_{T_1+}^\alpha
\Psi[r]\|_{L_2(V)}\|D_{T_2-}^{1-\alpha}\omega_{1,T_2-}[r]\|\in
             L^1((T_1,T_2);\mathbb{R}))
             \end{eqnarray*}
             where
             \begin{eqnarray}\label{deriva}
                  \begin{aligned}
                      \quad \,    D_{T_{1}+}^{\alpha} \Psi[r]
&=\frac{1}{\Gamma(1-\alpha)}\left(\frac{\Psi(r)}{\left(r-T_{1}\right)^{\alpha}}+\alpha
             \int_{T_{1}}^{r} \frac{\Psi(r)-\Psi(q)}{(r-q)^{1+\alpha}} d
q\right), \\
                      \quad \,     D_{T_{2}-}^{1-\alpha}
\omega_{1,T_{2}-}[r]
&=\frac{(-1)^{1-\alpha}}{\Gamma(\alpha)}\left(\frac{\omega_1(r)-\omega_1\left(T_{2}-\right)}{\left(T_{2}-r\right)^{1-\alpha}}+(1-\alpha)
             \int_{r}^{T_{2}}
\frac{\omega_1(r)-\omega_1(q)}{(q-r)^{2-\alpha}} d
             q\right) \end{aligned}
             \end{eqnarray}
             are Weyl fractional derivatives, being
             $\omega_{1,T_2-}(r)=\omega_1(r)-\omega_1(T_2-)$, with
$\omega_1(T_2-)$
             the left side limit of $\omega_1$~at~$T_2$.
             For the definition of the space
             $I_{T_1+}^\alpha(L^p((T_1,T_2);\mathbb{R}))$ and
             $I_{T_2-}^\alpha(L^{p'}((T_1,T_2);\mathbb{R}))$
             we refer to Samko et al. \cite{samko1993fractional} and Z\"{a}hle
\cite[p.
             337]{zahle1998integration}.

             We then introduce
             \begin{eqnarray}\label{z-integral}
                  \int_{T_1}^{T_2}\Psi
             d\omega_1:=(-1)^\alpha\int_{T_1}^{T_2}D_{T_1+}^\alpha
             \Psi[r]D_{T_2-}^{1-\alpha}\omega_{1,T_2-}[r]dr.
             \end{eqnarray}
             Due to Pettis' theorem and the separability of $V$, the
integrand is
             weakly measurable and, hence, measurable, and
             \begin{eqnarray*}
\bigg\|\int_{T_1}^{T_2}\Psi(r)\,d\omega_1(r)\bigg\|&=&\bigg(\sum_{j=1}^\infty\bigg\|\sum_{i=1}^\infty\int_{T_1}^{T_2}D_{T_1+}^\alpha
\psi_{ji}[r]D_{T_2-}^{1-\alpha}\zeta_{iT_2-}[r]dr\bigg\|^2\bigg)^\frac{1}{2}\cr
                  &\le&\int_{T_1}^{T_2}\|D_{T_1+}^\alpha
\Psi[r]\|_{L_2(V)}\|D_{T_2-}^{1-\alpha}\omega_{1,T_2-}[r]\|dr.
             \end{eqnarray*}

             \begin{lemma}
                  Suppose that $\Psi\in
             C^\gamma([T_1,T_2];L_2(V))~and~\omega_1\in\Omega_1$ such that
             $1-\beta<\alpha<\gamma, 1/2<\beta.$
             Then {\rm  (\ref{z-integral})} is well defined and there
exists a
             positive constant $c$ such that
                  \begin{eqnarray*}
\bigg\|\int_{T_1}^{T_2}\Psi(r)\,d\omega_1(r)\bigg\| \le c
             \|\Psi\|_{\gamma}\ltn \omega_1 \rtn_{\beta}(T_2-T_1)^{\beta}.
                  \end{eqnarray*}
             \end{lemma}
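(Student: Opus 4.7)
The plan is to estimate pointwise the two Weyl fractional derivatives entering the integrand of \eqref{z-integral}, then use the explicit bound already recorded right before the lemma together with a Beta-type integral to pull out the factor $(T_2-T_1)^\beta$.

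\textbf{Step 1: bound the first derivative.} Starting from the definition \eqref{deriva} and using $\|\Psi(r)\|_{L_2(V)}\le\|\Psi\|_\gamma$ together with
\[
\|\Psi(r)-\Psi(q)\|_{L_2(V)}\le \ltn\Psi\rtn_\gamma\,(r-q)^\gamma\le\|\Psi\|_\gamma(r-q)^\gamma,
\]
I estimate
\[
\|D_{T_1+}^\alpha\Psi[r]\|_{L_2(V)}\le\frac{\|\Psi\|_\gamma}{\Gamma(1-\alpha)}\Bigl((r-T_1)^{-\alpha}+\alpha\int_{T_1}^r(r-q)^{\gamma-1-\alpha}\,dq\Bigr).
\]
The inner integral is finite precisely because $\alpha<\gamma$, and it equals $(r-T_1)^{\gamma-\alpha}/(\gamma-\alpha)$. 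Hence, on the fixed interval $[0,T]$ (so that $(r-T_1)^{\gamma-\alpha}\le T^{\gamma-\alpha}$ can be absorbed into the constant),
\[
\|D_{T_1+}^\alpha\Psi[r]\|_{L_2(V)}\le c_1\,\|\Psi\|_\gamma\,(r-T_1)^{-\alpha}.
\]

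\textbf{Step 2: bound the second derivative.} Using $\|\omega_1(r)-\omega_1(q)\|\le\ltn\omega_1\rtn_\beta|r-q|^\beta$ in both terms of \eqref{deriva},
\[
\|D_{T_2-}^{1-\alpha}\omega_{1,T_2-}[r]\|\le\frac{\ltn\omega_1\rtn_\beta}{\Gamma(\alpha)}\Bigl((T_2-r)^{\beta-(1-\alpha)}+(1-\alpha)\int_r^{T_2}(q-r)^{\beta-2+\alpha}\,dq\Bigr).
\]
The inner integral converges because $\beta+\alpha>1$ (which is exactly the hypothesis $1-\beta<\alpha$), and it equals $(T_2-r)^{\beta-1+\alpha}/(\beta+\alpha-1)$. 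Therefore
\[
\|D_{T_2-}^{1-\alpha}\omega_{1,T_2-}[r]\|\le c_2\,\ltn\omega_1\rtn_\beta\,(T_2-r)^{\beta-1+\alpha}.
\]

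\textbf{Step 3: combine and integrate.} Inserting both estimates in the inequality displayed just above the lemma statement gives
\[
\bigg\|\int_{T_1}^{T_2}\Psi(r)\,d\omega_1(r)\bigg\|\le c_1c_2\|\Psi\|_\gamma\ltn\omega_1\rtn_\beta\int_{T_1}^{T_2}(r-T_1)^{-\alpha}(T_2-r)^{\beta-1+\alpha}\,dr.
\]
The change of variable $r=T_1+(T_2-T_1)u$ turns the remaining integral into
\[
(T_2-T_1)^{\beta}\int_0^1 u^{-\alpha}(1-u)^{\beta+\alpha-1}\,du=(T_2-T_1)^\beta\,B(1-\alpha,\beta+\alpha),
\]
the Beta function being finite since $1-\alpha>0$ and $\beta+\alpha>1>0$. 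This yields the claimed bound with $c=c_1c_2B(1-\alpha,\beta+\alpha)/\Gamma(1-\alpha)\Gamma(\alpha)$ (up to the dependence on $T$), and simultaneously shows that the integrand of \eqref{z-integral} lies in $L^1((T_1,T_2);\mathbb{R})$, so that the pathwise integral is well defined.

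The one subtle point is the bookkeeping of exponents: the three conditions $\alpha<\gamma$, $\alpha+\beta>1$ and $\alpha<1$ (the last being implicit in $\alpha<\gamma<1$) are each used at a different stage and together force the requirement $1-\beta<\alpha<\gamma$ with $\beta>1/2$. I do not expect any genuine obstacle beyond verifying these inequalities.
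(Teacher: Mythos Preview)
Your argument is correct and is exactly the standard way to obtain this estimate. Note, however, that the paper itself does not supply a proof of this lemma: it is stated without proof at the end of Section~2 as a known consequence of the Z\"ahle integral construction (the references to \cite{zahle1998integration} and \cite{chen2013pathwise} cover it), so there is nothing to compare against beyond observing that your computation reproduces the classical bound. One cosmetic remark: in Step~1 the sentence ``$(r-T_1)^{\gamma-\alpha}\le T^{\gamma-\alpha}$ can be absorbed into the constant'' is slightly imprecise as written; what you actually use is $(r-T_1)^{\gamma-\alpha}=(r-T_1)^{\gamma}(r-T_1)^{-\alpha}\le T^{\gamma}(r-T_1)^{-\alpha}$, so that both terms are dominated by a multiple of $(r-T_1)^{-\alpha}$. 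With that tweak the chain of inequalities is clean.
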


             \section{Pathwise mild solution of  stochastic evolution
             equations driven by two FBMs}\label{s3}
             \subsection{Ornstein-Uhlenbeck processes driven by the
FBM}\label{ss31}
             Let $V$ be a separable Hilbert space and let $-B$ be a
symmetric
             positive operator with compact inverse. Then $B$ generates
the strongly
             continuous
             analytic semigroup $S_B$ and $-B$ is closed.  The
eigenelements of $-B$
             generate a complete orthonormal system
$(e_{B,i})_{i\in\NN}$ with spectrum
         $0<\lambda_{B,1}\le\lambda_{B,2}\le \cdots$
where these
             eigenvalues have finite
             multiplicity and tend to infinity.

             Consider $\omega_2\in \Omega_2$. For simplicity we assume
that this
             random process can be presented by the orthonormal system
             $( e_{B,i} )_{i\in\NN}$ generated by the linear
operator $B$:
             \[
             \omega_2(t)=\sum_{i=1}^\infty (q_{ii}^2)^\frac12
 e_{B,i}
             \omega_2^i(t),
             \quad\sum_{i=1}^\infty q_{ii}^2<\infty,\quad
q_{ij}^2=0\quad \text{for
             }i\not=j.
             \]
             Then $\omega_2^i$ are twosided one dimensional standard FBM
which are iid where
             $q_{ij}^2$ are the representations of $Q_2$ w.r.t. the base
             $( e_{B,i} )_{i\in\NN}$.

             \medskip
             Let $\Omega_2^\ast$ be the set of $\omega_2\in\Omega_2$
which are
             subexponentially growing:
             \[
             \Omega_2^\ast =\bigcap_{m\in\NN}\Omega_{2,m}^\ast,\quad
\omega_2\in
             \Omega_{2,m}^\ast\;\text{iff
}\lim_{t\to\pm\infty}\|\omega_2(t)\|e^{-\frac{1}{m}|t|}=0.
             \]
             This set is straightforwardly $(\theta_t)_{t\in\RR}$-invariant.
             \begin{lemma}\label{l4}
             $\Omega_2^\ast\in\fF_2$ has measure one.
             \end{lemma}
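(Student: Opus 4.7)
The plan is to verify measurability first, then show each set $\Omega_{2,m}^\ast$ has full measure, and finally take the countable intersection. For measurability, using the continuity of $\omega_2$ I rewrite
\[
\Omega_{2,m}^\ast = \bigcap_{k\in\NN}\bigcup_{N\in\NN}\bigcap_{t\in\mathbb{Q},\,|t|\ge N}\{\omega_2:\|\omega_2(t)\|\,e^{-|t|/m}<1/k\},
\]
which is a countable combination of Borel-measurable cylinder sets and hence lies in $\fF_2$; so does $\Omega_2^\ast=\bigcap_{m}\Omega_{2,m}^\ast$.

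For full measure, by the time-reversal symmetry of two-sided FBM ($\{\omega_2(-t)\}_t$ has the same law as $\{\omega_2(t)\}_t$) it suffices to prove $\PP_{H_2}$-a.s.\ that $\lim_{t\to+\infty}\|\omega_2(t)\|\,e^{-t/m}=0$. I would argue via Borel--Cantelli along the integers. Set $M_n:=\sup_{t\in[n,n+1]}\|\omega_2(t)\|$. By the triangle inequality and stationarity of FBM increments,
\[
M_n\le\|\omega_2(n)\|+\sup_{s\in[0,1]}\|\omega_2(n+s)-\omega_2(n)\|,
\]
where the rightmost term is equal in law to $\sup_{s\in[0,1]}\|\omega_2(s)\|$. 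Since $\omega_2$ is a $V$-valued Gaussian process with continuous paths, Fernique's theorem supplies $\EE\sup_{s\in[0,1]}\|\omega_2(s)\|^p<\infty$ for every $p\ge 1$, while the self-similarity $\omega_2(n)\stackrel{d}{=}n^{H_2}\omega_2(1)$ together with Gaussian moment bounds gives $\EE\|\omega_2(n)\|^p\le Cn^{pH_2}$. Altogether, $\EE M_n^p\le C(n^{pH_2}+1)$.

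Markov's inequality then produces
\[
\PP\bigl(M_n>e^{n/m}/k\bigr)\le C\,k^p(n^{pH_2}+1)\,e^{-np/m},
\]
which is summable in $n$ for any $p\ge 1$. Borel--Cantelli then gives, for each fixed $k$, that almost surely $M_n\le e^{n/m}/k$ eventually; for $t\in[n,n+1]$ with $n$ large one has $\|\omega_2(t)\|\,e^{-t/m}\le M_n\,e^{-n/m}\le 1/k$ since $e^{-t/m}\le e^{-n/m}$. Intersecting over $k\in\NN$ forces $\lim_{t\to\infty}\|\omega_2(t)\|\,e^{-t/m}=0$ almost surely; intersecting over $m\in\NN$ yields $\PP_{H_2}(\Omega_2^\ast)=1$.

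The step I expect to be most delicate is arranging the moment bound on $M_n$ so that it grows at most polynomially in $n$: one has to combine the self-similar scaling of the Gaussian random variable $\omega_2(n)$ with a Fernique-type estimate on the oscillation of $\omega_2$ over a unit interval, using the trace class property of $Q_2$ to keep the variance in $V$ finite. Once this polynomial envelope is in hand, the exponential factor $e^{-np/m}$ swamps it for any $p\ge 1$, and the Borel--Cantelli argument closes routinely.
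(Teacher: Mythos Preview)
Your argument is correct, but the paper's proof proceeds along a different line. For measurability the paper does essentially what you do (reducing to suprema over unit intervals), but for full measure it avoids moment bounds and Borel--Cantelli altogether: it telescopes
\[
\|\omega_2(t)\|\le \sum_{i=0}^{\lfloor t\rfloor}\sup_{s\in[0,1]}\|\theta_i\omega_2(s)\|
\]
and then invokes Birkhoff's ergodic theorem for the ergodic MDS $(\Omega_2,\PP_{H_2},\theta)$ to conclude that the right-hand side grows at most linearly in $t$ almost surely, which is already stronger than subexponential growth. The only moment input needed is $\EE\sup_{s\in[0,1]}\|\omega_2(s)\|<\infty$. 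Your route, by contrast, exploits the Gaussian structure directly (self-similarity for $\omega_2(n)$, stationarity of increments, Fernique for the oscillation term) to obtain a polynomial envelope on $\EE M_n^p$ and then kills it with the exponential weight via Borel--Cantelli. Your approach is self-contained and does not rely on the ergodicity of $\theta$, while the paper's approach is shorter, yields the sharper linear bound, and fits naturally with the RDS framework already in place.
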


             \begin{proof}
             Note that $\|\omega_2(t)\|e^{-\frac{1}{m}|t|}=0$ if and only if
             \begin{equation}\label{eq10}
\lim_{n\to\infty}\sup_{s\in[n,n+1]}\|\omega_2(s)\|e^{-\frac{1}{m}|s|}=0
             \end{equation}
             for $t\to+\infty$ and similar for $t\to-\infty$. Indeed, if
(\ref{eq10})
             does not hold, then  there exists a subsequence
$(n_i)_{i\in\NN}$ and an
             $\eps>0$ so that
             \[
\sup_{s\in[n_i,n_i+1]}\|\omega_2(s)\|e^{-\frac{1}{m}|s|}>\eps\quad\text{for
             all }i\in\NN.
             \]
               Hence there exists a sequence $(n_i)_{i\in \mathbb{N}}:[n_i,n_i+1] \ni
t_i\to\infty$ so that
             \[
\limsup_{i\to\infty}\|\omega_2(t_i)\|e^{-\frac{1}{m}|t_i|}\ge\eps.
             \]
             The mappings
             \[
             \Omega_2\ni\omega_2\mapsto
\sup_{s\in[n,n+1]}\|\omega_2(s)\|,\sup_{s\in[-n-1,-n]}\|\omega_2(s)\|
             \]
             are $(\fF,\bB(\RR^+))$-measurable. Hence
$\Omega_{2,m}^\ast$ and thus
             $\Omega_{2}^\ast$  is measurable.

             \medskip

             For $t\to+\infty$ we have an asymptotically linearly
bounded growths:
             \[
             \|\omega_2(t)\|\le \sum_{i=0}^{\lfloor t\rfloor}\sup_{s\in
             [0,1]}\|\theta_i\omega_2(s)\|\sim \EE \sup_{s\in
             [0,1]}\|\theta_i\omega_2(s)\|\lfloor t\rfloor
             \]
             with probability one by the ergodic theorem where the right
hand side is
             finite, see Kunita \cite[Theorem 1.4.1]{kunita1990stochastic}.
             Similarly we can argue for $t\to-\infty$.  For $\omega_2$
from this set
             we have
             \[
             \limsup_{t\to\pm\infty}\frac{\|\omega_2(t)\|}{|t|}< \infty.
             \]
             Hence $\Omega_2^\ast$ contains a subset of measure one  so that
             $\PP_{H_2}(\Omega_2^\ast)=1$.
             \end{proof}

             We consider the equation
             \begin{equation}\label{eq12}
                  dZ(t) = BZ(t) \,dt+d\omega_{2}(t),\quad Z(0)=Z_0\in
             V,\quad t\ge 0
             \end{equation}
             interpreted in mild form.

             \medskip

             Let $\pi_p$ be the orthonormal projection with respect to
             $( e_{B,i} )_{i=1,\cdots,p}$. Then by Cheridito
et al.
             \cite[Proposition
             A.1]{cheridito2003fractional}
             for some $Z_0\in V$ we have a $p$-dimensional Ornstein
Uhlenbeck process
             (O-U process)
             generated by the finite dimensional FBM $\pi_p\omega_2$:
             \begin{align*}
                Z_p(t,\omega) & =S_B(t)\pi_pZ_0+\pi_p\omega_2(t)+B\int_0^t
             S_B(t-r)\pi_p\omega_2(r)dr
             \end{align*}
             Note that $\pi_p$ commutes with  $B$ and $S_B(t)$.
             $\pi_p\omega(r)$ converges pointwise to $\omega_2(r)$ on
any interval
             $[0,t]$ and since $\pi_p\omega_2$ has a uniform bounded
$\gamma^\prime$
             H{\"o}lder-norm on $[0,t]$ this convergence is uniform. On
the other
             hand for $\gamma<\gamma^\prime$ applying Maslowski and Nualart
             \cite[(4.27)]{maslowski2003evolution} we have the
convergence of
             $(\pi_p\omega_2)_{p\in\NN}$ to $\omega_2$ with respect to the
             $\gamma$-H{\"o}lder norm.
             Then by
             the proof of Pazy \cite[Lemma 4.3.4, Theorem  4.3.5
(iii)]{pazy2012semigroups}  or
             Lunardi  \cite[Theorem 4.3.1 (III)]{lunardi2012analytic} we obtain that
             \begin{align*}
                 & S_B(t)\pi_pZ_0+\pi_p\omega_2(t)+B\int_0^t
S_B(t-r)\omega_2(r)dr \\
                 &\underset{p\to\infty} {\longrightarrow}
S_B(t)Z_0+\omega_2(t)+B\int_0^{t}
             S_B(t-r)\omega_2(r)dr
             \end{align*}
             which we consider to be the solution of (\ref{eq12})
formally written as
             \[
             S(t)Z_0+\int_0^tS(t-r)d\omega_2(r).
             \]
             This holds for every $t>0$.

             \medskip

             We replace now $\omega_2$ by $\theta_{-t}\omega_2$:
             \begin{align}\label{eq13}
             \begin{split}
                 \theta_{-t}\omega_2(t)+&B\int_0^{t}
             S_B(t-r)\theta_{-t}\omega_2(r)dr\\
&=B\int_0^{t}
             S_B(t-r)\omega_2(r-t)dr-S_B(t)\omega_2(-t) \\
                 &=B\int^0_{-t} S_B(-r)\omega_2(r)dr-S_B(t)\omega_2(-t)
                 \end{split}
             \end{align}
             by
             \[
B\int_0^tS_B(t-r)\omega_2(-t)dr=S_B(t)\omega_2(-t)-\omega_2(-t).
             \]
             We show that the right hand side of (\ref{eq13}) is
uniformly bounded
             for $t>1$. We have
       \begin{align}\label{eq14}
     \begin{split}
              \bigg\|B\int_{-t}^0
S_B(-r)\omega_2(r)dr\bigg\|\le&\bigg\|B\int_{-1}^0
             S_B(-r)\omega_2(r)dr\bigg\|
\\&+\bigg\|B\int_{-t}^{-1}
             S_B(-r)\omega_2(r)dr\bigg\|.
            \end{split}
             \end{align}
             In particular we can estimate the first term
             \begin{align*}
\bigg\|B\int_{-1}^0S_B(-r)\omega_2(r)dr\bigg\|\le&\bigg\|B\int_0^1S_B(1-r)\theta_{-1}\omega_2(r)dr\bigg\|\\
&+\|S_B(1)\omega_2(-1)\|+\|\omega_2(-1)\|<\infty.
             \end{align*}
             The right hand side is finite by Pazy \cite[Theorem
4.3.5]{pazy2012semigroups}.
             For the second norm we have that for
$\omega_2\in\Omega_2^\ast$ for any
             $\lambda_{B,1}>\lambda_B>2\zeta>0$ there exists a
$C(\zeta,\omega_2)$ so that
             \[
             \|\omega_2(t)\|\le
C(\zeta,\omega_2)e^{\zeta|t|}\quad\text{for }t\in\RR.
             \]
             Note that there is a constant $C_\zeta$ so that
             \[
             \|BS_B(t)\|\le C_\zeta\frac{1}{t}e^{-(\lambda_B-\zeta)t},\quad
             \text{for } t>0.
             \]
             Thus the last norm in (\ref{eq14})  is  bounded for any
$t>1$ by
             \[
             \frac{C_\zeta C(\zeta,\omega_2)}{\lambda_{B}-2\zeta}.
             \]
             Hence  the random variable
             \begin{align*}
             Z(\omega_2)=&B\int_{-\infty}^0
             S_B(-q)\omega_2(q)\, dq\\
             &=\lim_{t\to\infty}\Big(B\int_{-t}^0
             S_B(-q)\omega_2(q)\, dq-S_B(t)\omega_2(-t)+S_B(t)Z_0\Big)
             \end{align*}
             is well defined.

             \medskip

             Consider the mild form of (\ref{eq12})  with initial time
$r\in \RR$ and
             $t>r$:
             \begin{align}\label{eq11}
             \begin{split}
Z(t,\omega_2)=&S_B(t-r)Z_0+\int_r^tS_B(t-q)\,d\omega_2(q)\\
=&S_B(t-r)Z_0+\int_0^{t-r}S_B(t-r-q)\,d\theta_r\omega_2(q)\\
=&S_B(t-r)Z_0+B\int_0^{t-r}S_B(t-r-q)\theta_r\omega_2(q)\,dq+\theta_r\omega_2(t-r).
             \end{split}
             \end{align}
             In particular for $Z_0=Z(\theta_r\omega_2)$ we have
             \[
             Z(\theta_t\omega_2)=Z(t,\omega_2)
             \]
             is a stationary solution to (\ref{eq12}).
             In additional by Pazy \cite[Theorem 4.3.5 (iii)]{pazy2012semigroups}
the second
             and the third term on the right hand side of (\ref{eq11})
have a finite
             $\gamma$-H{\"o}lder norm with respect to $t\in[r,T]$. Then
the right hand
             side has a finite
             $\gamma$-H{\"o}lder norm with respect to
             $t\in[r+\delta,T]$,\,$0<\delta<T-r$ so that
$Z(\theta_t\omega_2)$ has
             a finite $\gamma$-H{\"o}lder norm on any compact interval.

             Let $\omega_{2,\eps}(\cdot)$ be the scaled function
             $\omega_2(\frac{1}{\eps}\cdot)$. Over the probability space
             $(\Omega_2,\fF_2,\PP_{H_2})$ this is an FBM which has the same
             distribution of $\frac{1}{\eps^{H_2}}\omega_2(\cdot)$. We
consider the
             stationary mild solution of
             \begin{equation}\label{eqa-z1}
                  dZ^{\eps}(t) = \frac{1}{\eps}BZ^{\eps}(t)
\,dt+d\omega_{2,\eps}(t).
             \end{equation}
             Similar to above this solution process is given by
             \[
Z^\eps(\theta_r\omega_2)=\frac{1}{\eps}B\int_{-\infty}^0S_{\frac{B}{\eps}}(-q)\theta_r\omega_{2,\eps}(q)\,dq,\quad
             r\in\RR
             \]
             is a continuous random process which solves (\ref{eqa-z1}).
We note that
             for any $\eps\in (0,1]$ $\omega_{2,\eps}\in \Omega_2^\ast$
             if and only if $\omega_2\in\Omega_2^\ast$.

             \medskip

             The following lemma describes the relation between $Z$ and
$Z^\eps$.
             \begin{lemma}\label{z1-equal}
                  Let
$\omega_{2,\eps}(\cdot)=\omega_2(\frac{1}{\eps}\cdot)$. Then we
             have
                  \[
Z(\theta_{\frac{r}{\eps}}\omega_2)=Z^\eps(\theta_r\omega_{2})\quad
             r\in\RR.
                  \]
             \end{lemma}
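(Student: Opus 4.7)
The plan is to prove the identity by direct unfolding of the definitions and a single change of variables, exploiting the semigroup identity $S_{B/\eps}(t)=S_B(t/\eps)$ and the compatibility between the shift $\theta$ and the time rescaling.

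First I would write out the two sides. By definition,
\[
Z^\eps(\theta_r\omega_2)=\frac{1}{\eps}B\int_{-\infty}^0 S_{B/\eps}(-q)\,\theta_r\omega_{2,\eps}(q)\,dq,
\]
while using $\omega_{2,\eps}(\cdot)=\omega_2(\cdot/\eps)$ and the definition of the shift,
\[
\theta_r\omega_{2,\eps}(q)=\omega_2\!\bigl((q+r)/\eps\bigr)-\omega_2(r/\eps)=\theta_{r/\eps}\omega_2(q/\eps).
\]
Next I would perform the change of variables $q=\eps s$ (so $dq=\eps\,ds$ and the range $q\in(-\infty,0]$ becomes $s\in(-\infty,0]$), combined with the semigroup identity $S_{B/\eps}(-\eps s)=S_B(-s)$. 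This gives
\[
Z^\eps(\theta_r\omega_2)=\frac{1}{\eps}B\int_{-\infty}^0 S_B(-s)\,\theta_{r/\eps}\omega_2(s)\cdot \eps\,ds=B\int_{-\infty}^0 S_B(-s)\,\theta_{r/\eps}\omega_2(s)\,ds,
\]
which is precisely the definition of $Z(\theta_{r/\eps}\omega_2)$.

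The only thing worth checking carefully is that the manipulations above are legitimate in the improper-integral limit, i.e.\ that $Z(\theta_{r/\eps}\omega_2)$ and $Z^\eps(\theta_r\omega_2)$ are both well-defined for the same $\omega_2$. This follows because $\Omega_2^\ast$ is invariant under both the shift $(\theta_t)_{t\in\RR}$ (already observed in the excerpt) and under the rescaling $\omega_2\mapsto\omega_{2,\eps}$ (noted right after \eqref{eqa-z1}), so the integrals defining $Z$ and $Z^\eps$ converge under the same subexponential growth bound used in \eqref{eq14}. One may therefore justify the change of variables on the truncated integrals over $[-t,0]$ first and then pass to the limit $t\to\infty$, using the convergence arguments already established for $Z$ and $Z^\eps$.

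The main obstacle is essentially bookkeeping: the three different rescalings ($q\mapsto q/\eps$ inside $\omega_{2,\eps}$, the semigroup parameter rescaling $B\mapsto B/\eps$, and the prefactor $1/\eps$) must cancel exactly, and one must be careful not to confuse the two shifts $\theta_r$ and $\theta_{r/\eps}$, which act on different processes. Once the identity $\theta_r\omega_{2,\eps}(q)=\theta_{r/\eps}\omega_2(q/\eps)$ is in hand, however, the rest is a one-line substitution.
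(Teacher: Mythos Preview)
Your proof is correct and follows essentially the same route as the paper: both arguments reduce to the shift identity $\theta_r\omega_{2,\eps}(q)=\theta_{r/\eps}\omega_2(q/\eps)$, the semigroup relation $S_{B/\eps}(t)=S_B(t/\eps)$, and the change of variables $q=\eps s$, the only difference being that the paper starts from $Z(\theta_{r/\eps}\omega_2)$ and computes forward to $Z^\eps(\theta_r\omega_2)$ whereas you go in the reverse direction. Your added remark justifying the improper-integral manipulation via the $(\theta_t)$- and rescaling-invariance of $\Omega_2^\ast$ is a nice touch that the paper leaves implicit.
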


             \begin{proof}
                  We have
                  \begin{align*}
Z(\theta_{\frac{r}{\eps}}\omega_2)&\,=B\int_{-\infty}^0
             S_B(-q)\theta_{\frac{r}{\eps}}\omega_2(q)dq \\
                      &\,=B\int_{-\infty}^0 S_B(-q)(\omega_{2,\eps}(r+\eps
             q)-\omega_{2,\eps}(r))dq \\
                      &\,=\frac{1}{\eps}B\int_{-\infty}^0
             S_{\frac{B}{\eps}}(-q)(\omega_{2,\eps}(r+
q)-\omega_{2,\eps}(r))dq\\
                      &\,= \frac{1}{\eps}B\int_{-\infty}^0
             S_{\frac{B}{\eps}}(-q)\theta_r\omega_{2,\eps}(q)dq=
             Z^\eps(\theta_r\omega_{2}).
                  \end{align*}
             We note that $S_{\frac{B}{\eps}}(t)=S_B(\frac{t}{\eps})$
             for  $t\ge 0$.
             \end{proof}

             \begin{lemma}\label{l2}
             We have:
             \begin{enumerate}
             \item[{\rm (1)}]  $\EE[\sup_{s\in
[0,T]}\|Z(\theta_s\omega_2)\|]<\infty$.
             \item[{\rm (2)}]    Let $T>0$.  We have for $\eps\to 0$ on
a $(\theta_t)_{t\in\RR}$-invariant set of full measure.
             \begin{equation*}
             \sup_{s\in[0,T]} \|Z^\eps(\theta_s\omega_2)\|=o(|\eps|^{-1}).
             \end{equation*}
             \end{enumerate}
             \end{lemma}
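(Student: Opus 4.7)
For the first assertion, I would begin from the cocycle identity
$Z(\theta_s\omega_2)=B\int_{-\infty}^{0}S_B(-q)(\omega_2(s+q)-\omega_2(s))\,dq$
and split the outer integral at $q=-1$. On $[-1,0]$, combine the analytic semigroup estimate $\|BS_B(-q)\|\lesssim |q|^{-1}$ with the H\"older bound $\|\omega_2(s+q)-\omega_2(s)\|\le \ltn\omega_2\rtn_{\gamma',-1,T+1}|q|^{\gamma'}$ to produce an integrable singularity of order $|q|^{\gamma'-1}$; its random prefactor is in $L^p$ for every $p$ by Fernique's theorem, so this piece contributes finitely to the expectation. On $(-\infty,-1]$, use the exponential bound $\|BS_B(-q)\|\le C_\zeta e^{-(\lambda_B-\zeta)|q|}$ with $2\zeta<\lambda_B$; after Fubini, the relevant inputs are the stationarity of increments of FBM and the $L^2$-scaling $\EE\|\omega_2(q)\|^2\lesssim|q|^{2H_2}$, which together give
$\EE\sup_{s\in[0,T]}\|\omega_2(s+q)\|\le \EE\sup_{u\in[0,T]}\|\omega_2(u)\|+\EE\|\omega_2(q)\|=O(1)+O(|q|^{H_2}).$
Integrating this against the exponentially decaying weight yields a finite bound, which proves (1).

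For (2), I would first invoke Lemma \ref{z1-equal} to rewrite
$\sup_{s\in[0,T]}\|Z^\eps(\theta_s\omega_2)\|=\sup_{u\in[0,T/\eps]}\|Z(\theta_u\omega_2)\|,$
reducing the claim to a sublinear long-time bound for the stationary process $u\mapsto Z(\theta_u\omega_2)$. Introduce the stationary sequence $M_n(\omega_2):=\sup_{u\in[n,n+1]}\|Z(\theta_u\omega_2)\|=M_0(\theta_n\omega_2)$; part (1) applied with $T=1$ gives $\EE M_0<\infty$. By stationarity, for every $\delta>0$,
$\sum_{n\ge 1}\PP(M_n>\delta n)=\sum_{n\ge 1}\PP(M_0>\delta n)\le \EE M_0/\delta<\infty,$
so the first Borel--Cantelli lemma yields $M_n/n\to 0$ almost surely. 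Covering $[0,T/\eps]$ by at most $\lceil T/\eps\rceil+1$ consecutive unit intervals then gives $\sup_{u\in[0,T/\eps]}\|Z(\theta_u\omega_2)\|\le \max_{0\le n\le \lceil T/\eps\rceil}M_n=o(T/\eps)=o(|\eps|^{-1})$ as $\eps\to 0$.

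For the $(\theta_t)_{t\in\RR}$-invariant full measure set, I would take
$\Omega_2^{**}:=\{\omega_2:\|Z(\theta_t\omega_2)\|/|t|\to 0\text{ as }|t|\to\infty\}.$
Invariance under each $\theta_s$ is immediate from the identity $\|Z(\theta_{u+s}\omega_2)\|/u=(\|Z(\theta_{u+s}\omega_2)\|/(u+s))\cdot(u+s)/u$ together with $(u+s)/u\to 1$; full measure follows from the Borel--Cantelli argument above applied in both time directions. The main obstacle is really the decomposition in (1): the splitting at $|q|=1$ must reconcile the singular kernel $|q|^{-1}$ of $BS_B$ near zero with the H\"older modulus of $\omega_2$, while at infinity the exponential decay of $S_B$ has to dominate the at-most-polynomial growth of $\EE\|\omega_2(q)\|$, with all the random prefactors that appear remaining integrable. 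Once (1) is secured, (2) is a short ergodic-theoretic corollary requiring no further analytic input.
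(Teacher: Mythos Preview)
Your argument is correct, but it diverges from the paper's in both parts.

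For (1), the paper does not decompose the defining integral of $Z(\theta_s\omega_2)$ at all. Instead it writes
\[
\sup_{t\in[0,T]}\|Z(\theta_t\omega_2)\|\le \sup_{s<t\in[0,T]}\|Z(\theta_t\omega_2)-Z(\theta_s\omega_2)\|+\|Z(\omega_2)\|,
\]
bounds the increment term by $C\ltn\omega_2\rtn_\gamma T^\gamma$ via the H\"older regularity result of Lunardi \cite[Theorem~4.3.1\,(III)]{lunardi2012analytic}, and controls $\EE\ltn\omega_2\rtn_\gamma$ through Kunita \cite[Theorem~1.4.1]{kunita1990stochastic}; the term $\EE\|Z(\omega_2)\|$ is handled by taking expectations in the already-established identity~\eqref{eq13} using $\EE\|\omega_2(t)\|^2\le(\sum_i q_{ii}^2)|t|^{2H_2}$. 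Your direct splitting of the integral at $|q|=1$ is more hands-on and self-contained (no external regularity theorems), at the price of having to push the supremum over $s$ inside the integral and justify pulling $B$ through via closedness---both routine, but steps the paper avoids entirely.

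For (2), the two arguments coincide in spirit: both rest on the sublinear growth $\|Z(\theta_q\omega_2)\|/|q|\to 0$ and the identity from Lemma~\ref{z1-equal}. The paper simply quotes Arnold \cite[Proposition~4.1.3]{arnold1998random} for sublinear growth (which also delivers the $(\theta_t)$-invariant full-measure set) and then runs a short contradiction argument with a maximizing time $s_{\eps_j}\in[0,T/\eps_j]$. You instead reprove the sublinear growth via Borel--Cantelli on the stationary sequence $M_n=\sup_{[n,n+1]}\|Z(\theta_u\omega_2)\|$ and finish with a covering/maximum argument. Your version is more elementary and explicit; the paper's is shorter because it outsources the key step to a standard reference.
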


             \begin{proof}

             (1) To see that $\EE [
\sup_{[0,T]}\|Z(\theta_t\omega_2)\|]<\infty$  we have
             \begin{align*}
             \sup_{[0,T]}\|Z(\theta_t\omega_2)\|\le&\sup_{t\in
[0,T]}\|Z(\theta_t\omega_2)-Z(\omega_2)\|+\|Z(\omega_2)\|\\
             \le & \sup_{s<t\in
[0,T]}\|Z(\theta_t\omega_2)-Z(\theta_s\omega_2)\|+\|Z(\omega_2)\|
             \end{align*}
             Then we have by Lunardi  \cite[Theorem 4.3.1
(III)]{lunardi2012analytic}
             \[
             \EE [\sup_{s<t\in
             [0,T]}\|Z(\theta_t\omega_2)-Z(\theta_s\omega_2)\|]\le C\EE
[\ltn
             \omega_2\rtn_{\gamma}] T^\gamma.
             \]
             By Kunita \cite[Theorem 1.4.1]{kunita1990stochastic} we
have a random
             variable $K(\omega_2)$ such that
             \[
             \EE [\ltn \omega_2\rtn_{\gamma}]\le \EE [K(\omega_2)]\le (\EE
             [K(\omega_2)^{2n}])^{1/(2n)}<\infty
             \]
             when $2n H_{2}>1$.  The finiteness of
$\EE[\|Z(\omega)\|]$
             follows by taking
             the expectation of the right hand side of \eqref{eq13}
having in mind
             $\EE [\|\omega_2(t)\|^2] \le (\sum_i q_{ii}^2
|t|^{2H_{2}})$.

                      (2)
             By (1) we can apply Arnold \cite[Proposition
4.1.3]{arnold1998random} and know
             that $\|Z(\theta_{q}\omega_2)\|$ is sublinear
growing i.e.
                      \begin{eqnarray*}
             \lim\limits_{q\rightarrow \pm
             \infty}\|Z(\theta_{q}\omega_2)\|\cdot|q|^{-1}=0\quad
                      \end{eqnarray*}
             on a $(\theta_t)_{t\in\RR}$ invariant set  of
             full measure.
             Suppose the assertion does not hold  for $\omega_2$ from
the invariant set mentioned above. Then there exists a $\delta>0$ and a sequence
$(\eps_j)_{j\in\NN},\,\eps_j>0$
             tending to zero so that by Lemma \ref{z1-equal}
             \begin{equation}\label{star}
             \sup_{s\in[0,T]}
\|Z^{\eps_j}(\theta_s\omega_2)\|\eps_j=\sup_{s\in[0,T]}
             \|Z(\theta_\frac{s}{\eps_j}\omega_2)\|\eps_j>\delta
             \end{equation}
             for every $j$. Let $s_{\eps_j}$ be the largest element in $[0,T
             /\eps_j]$ so that
             \[
             \sup_{q\in
[0,T/\eps_j]}\|Z(\theta_{q}\omega_2)\|=\|Z(\theta_{s_{\eps_j}}\omega_2)\|.
             \]
             For $j\to\infty$ the sequence $(s_{\eps_j})$ tends to
$\infty$. Hence
                      \begin{eqnarray}
             0=\lim\limits_{j \rightarrow
\infty}\|Z(\theta_{s_{\eps_j}}\omega_2)\|\cdot\frac{1}{s_{\eps_j}}
             \geq \lim\limits_{j \rightarrow
\infty}\|Z(\theta_{s_{\eps_j}}\omega_2)\|\cdot\frac{\eps_j}{T}=0
               \end{eqnarray}
             which is a contradiction to (\ref{star}).
             \end{proof}

             \subsection{Description of the problem of stochastic
evolution
             equations}\label{sec3.2}
             Let $u(t)=(u_1(t),u_2(t))\in V\times V$ be the solution of
             \begin{equation}\label{eq-mix-path}
                  du(t)=Ju(t)\,dt+F(u(t))\,dt+
             G(u(t))\,(d\omega_1(t),d\omega_2(t))
             \end{equation}
             where $u(0)=u_0=(u_{01},u_{02})\in
             V\times V, $ $\omega_1$ is a path of the canonical FBM with
Hurst
             exponent $H_1$, and $\omega_2$  is a path of the canonical
FBM with Hurst
             exponent $H_2$ so that (\ref{eq3}) holds. In contrast to
the equation
             considered in Maslowski et al.
\cite{maslowski2003evolution} and Chen et
             al. \cite{chen2013pathwise}, (\ref{eq-mix-path}) contains
the term
             $\omega_2$ which does not have the H\"older regularity of
$\omega_1$.

             \smallskip

             We describe the assumptions regarding the operator $J$ and
nonlinear
             terms $F$ and $G$ as follows
             \begin{itemize}
                  \item[(H1)]  Let $-J$ be a closed positive symmetric
operator
             with compact inverse. Then, $J$ generates an exponential
analytic
             semigroup $S_J$ on $V\times V$,  such that $\|S_J(t)\|\leq
e^{-\lambda_J
             t},\lambda_J>0$, for $t\geq 0$.

                  \item[(H2)]  The operator $F:V\times V\to V\times V$
is Lipschitz
             continuous with Lipschitz constant
             $c_{DF}$.
                  \item[(H3)] The operator is given by
                  \[
                  G=\left(
                      \begin{array}{cc}
                        h(u_1) & 0 \\
                        0 & {\rm id} \\
                      \end{array}
                    \right)
                  \]
                  where $h:V\to L_2(V)$.
                  The latter space is the space
             of Hilbert Schmidt operators on $V$. $h$ has bounded first
             and second
             derivatives with bounds $c_{Dh}$ and $c_{D^2h}$.
             \end{itemize}

             \smallskip

             Let $-J$ from (H1). We can assume that $-J$ has a positive
spectrum of
             finite multiplicity $(\lambda_{J,i})_{i\in\mathbb{N}}$ so that
             $
             \lim_{i\to\infty}\lambda_{J,i}=\infty.$
             The associated eigenelements
$( e_{J,i} )_{i\in\mathbb{N}}$
             are chosen so that
             they form a complete orthonormal system.

             \smallskip

             For $\alpha\ge 0$ define the Banach spaces $\tilde
             V_\alpha=D((-J)^\alpha)$ where
             the norm of this space is given in the following definition
             \[
             \tilde V_\alpha=\bigg\{u\in \tilde V:
             \|u\|_\alpha^2=\sum_{i=1}^\infty\tilde
\lambda_{i}^{2\alpha} |\hat
             u_i|^2\}<\infty,\quad u=\sum_{i=1}^\infty \hat u_i e_{J,i}\bigg\}\quad
             \text{with }\tilde V=\tilde V_0.
             \]
              Here $\tilde V$ stands for $V\times V$ which has the
orthonormal
             basis  $( e_{J,i})_{i\in\NN}\in \tilde V$
coming from the
             eigenvalues
             of $-J$  related to the eigenvectors $\tilde
\lambda_i=\lambda_{J,i}$.
             Then $\tilde V_1=D(-J)$ is the domain of $J$. However,
later considering
             the operators $-A,\,-B$  we set $\tilde V=V$ and the
eigenvalues and
             eigenvectors $\lambda_{A,i}, e_{A,i}$ and
$
             \lambda_{B,i}, e_{B,i}$ are  given by the eigenvalues,
             eigenvectors of $-A,\,-B$. The properties of $B$ has been
used at the beginning of Section \ref{ss31}.
             The operators $A,\,B$ generate an exponential semigroup
$S_A,\,S_B$ like
             with similar properties as $J$ but defined on $\tilde V=V$.
             Let $L(\tilde V_\upsilon, \tilde V_\zeta)$ denote the space
of continuous
             linear operators from $\tilde V_\upsilon$ into $ \tilde
V_\zeta$. There
             exists a constant $c>0$,
             such that for $0\le s <t \le T$, we have
             \begin{align}\label{semi1}
                  \|S_J(t)\|_{L(\tilde V, \tilde V_{\sigma})} &\le c
             t^{-\sigma}e^{-\lambda_J t},\, \,
             \,{\rm for}\, \, \, \sigma>0\\
                  \label{semi2}
                  \|S_J(t-s)-\mathrm{id}\|_{L(\tilde V_{\sigma}, \tilde V)}
             &\le c (t-s)^{\sigma},\, \, \,{\rm for}\, \, \,\sigma\in[0,1].
             \end{align}
             In (\ref{semi1}) notice that $\lambda_J$ is a
positive
             constant. We also
             note that, for $0<\sigma\le 1$,
             there exists $c>0$ such that
             for $0\le q \le r \le s \le t$, we derive
             \begin{align}\label{semi3}
                  \|S_J(t-r)-S_J(t-q)\|_{L(\tilde V)} \le c
             (r-q)^{\sigma}(t-r)^{-\sigma}
             \end{align}
             and for $\varrho,\,\nu\in (0,1]$
             \begin{align}\label{semi4}
             \begin{split}
\|S_J(t-r)-S_J(s-r)-&S_J(t-q)+S_J(s-q)\|_{L(\tilde V)} \\
             & \le c (t-s)^{\varrho}(r-q)^{\nu}(s-r)^{-(\varrho+\nu)}.
             \end{split}
             \end{align}

             \begin{remark}\label{cond-G}{\rm
                      By (H3), it is easy to obtain the estimate (see
for instance
              Maslowski et al. \cite{maslowski2003evolution})
                      \begin{eqnarray*}
                          \|h(u)\|_{L_2(V)}&\le &
             c_h+c_{Dh}\|u\|, \cr
             \|h(u)-h(v)\|_{L_2(V)}&\le&
             c_{Dh}\|u-v\|,\cr
             \|h(u_1)-h(u_2)-h(v_1)+h(v_2)\|_{L_2(V)}&\le&
             c_{Dh}\|u_1-v_1-(u_2-v_2)\|\cr
&&+c_{D^2h}\|u_1-u_2\|+(\|u_1-v_1\|+\|u_2-v_2\|)
                      \end{eqnarray*}
                      for all $u,\,v,\,u_i,\,v_i\in V, i=1,2.$}
             \end{remark}

             \subsection{Pathwise mild solution}
             Let $t\to Z(\theta_t\omega_2)$ be the stationary O-U
process defined in Section \ref{ss31}.

             In the different formulas, $c$ will denote a generic constant
that may
             differ from line to line. Sometimes we will write $c_T$
when we want to
             stress the dependence on $T$.

             Let now
             \[
             J=\left(
                 \begin{array}{cc}
                   A & 0 \\
                   0 & B \\
                 \end{array}
               \right).
             \]
             We interpret the equation \eqref{eq-mix-path} in mild form:
             \begin{eqnarray}\label{eq4}
                  u(t) &= &S_J(t)u_0+\int_0^t S_J(t-r) F(u(r))\,dr+\int_0^t
             S_J(t-r)G(u(r))\,d\omega(r) \cr
                  &=
             &\left(
                \begin{array}{c}
                  S_A(t)u_{01} \\
                  S_B(t)(u_{02}-Z(\omega_2)) \\
                \end{array}
              \right)
             +\left(
                \begin{array}{c}
                  0 \\
                  Z(\theta_t\omega_2) \\
                \end{array}
              \right)
             +\int_0^t S_J(t-r)F(u(r))dr+ \\
             &&+\left(
                \begin{array}{c}
                  \int_0^tS_A(t-r)h(u_1(r))d\omega_1(r) \\
                  0 \\
                \end{array}
              \right).
             \end{eqnarray}

             To estimate the stochastic integral of the last expression in
             (\ref{eq4}), we refer to
\cite{chen2013pathwise,maslowski2003evolution,zahle1998integration} where a similar
             estimate is derived.
             $u\in C^{\gamma,\sim}([0,T],V\times V)$. In addition, from
(\ref{eq3}) it follows that  we find $\alpha,\,\beta,\gamma$ so that we
assume that $\beta>1/2$,
             $0<\alpha<\gamma<\beta,\,
             \beta+\alpha >1$. The condition $\alpha<\gamma$ allows to
define
             $D_{0+}^\alpha h(u_1(\cdot))[r]$ and
             $\alpha+\beta>1$ allows to define
             $D_{T-}^{1-\alpha}\omega_{1,T}(\cdot)[r]$, see (\ref{deriva}).
             Then we have
             \begin{eqnarray*}
                  \int_0^tS_A(t-r) h(u_1(r))\,d\omega_1(r)
             =(-1)^\alpha\int_0^t
             D_{0+}^\alpha
S_A(t-\cdot)h(u_1(\cdot))[r]D_{T-}^{1-\alpha}\omega_{1,T}(\cdot)[r]\,dr.
             \end{eqnarray*}

               We firstly introduce the  operator
$u\mapsto\tT(u,\omega_1,\omega_2,u_0)$ for fixed $u_0$
             with domain $C^{\gamma,\sim}([0,T];V\times V)$ and
             $\omega_1\in C^{\beta,\sim}([0,T];V),\,\omega_2 \in
C^{\gamma^\prime,\sim}([0,T]; V)$. This
             operator is defined by
                  \begin{eqnarray*}
             \tT(u,\omega_1,\omega_2,u_{0})[t]&:=&\left(
                \begin{array}{c}
                  S_A(t)u_{01} \\
                  S_B(t)(u_{02}-Z(\omega_2)) \\
                \end{array}
              \right)
             +\left(
                \begin{array}{c}
                  0 \\
                  Z(\theta_t\omega_2) \\
                \end{array}
              \right)\\
             &&+\int_0^t S_J(t-r)F(u(r))dr
             +\left(
                \begin{array}{c}
                  \int_0^tS_A(t-r)h(u_1(r))d\omega_1(r) \\
                  0 \\
                \end{array}
              \right).
                  \end{eqnarray*}
             \begin{lemma}\label{tt}
               Let {\rm (H1)-(H3)} hold and for any $T>0$ there exists a
$c_T>0$
             such that under (\ref{eq3}) we
             can ensure the above condition on
$\alpha,\,\beta,\,\gamma<\gamma^\prime<H_2$. For
             $\rho>0, \omega_1\in C^{\beta}([0,T],V)$, $\omega_2\in
             C^{\gamma^\prime}([0,T],V)$ and $u\in
             C^{\gamma,\sim}([0,T],V\times V)$, we have
             \begin{align}\label{tteq}
             \begin{split}
\|\tT(u,\omega_1,&\omega_2,u_0)\|_{\gamma,\rho,\sim} \\
             &\le
c_T(\|u_{01}\|+\|u_{02}\|+\|Z(\theta_\cdot
\omega_2)\|_\gamma)+C(\rho,\omega_1,T)(1+\|u\|_{\gamma,\rho,\sim})
             \end{split}
             \end{align}
             where $C(\rho,\omega_1,T)>0$ such that
             $\lim_{\rho\to\infty}C(\rho,\omega_1,T)=0.$
             \end{lemma}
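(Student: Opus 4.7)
The plan is to split $\tT(u,\omega_1,\omega_2,u_0)$ into the four natural summands
\[
\tT=\tT_1+\tT_2+\tT_3+\tT_4,
\]
where $\tT_1$ is the linear semigroup term $(S_A(t)u_{01},S_B(t)(u_{02}-Z(\omega_2)))$, $\tT_2=(0,Z(\theta_t\omega_2))$, $\tT_3(t)=\int_0^t S_J(t-r)F(u(r))\,dr$, and $\tT_4(t)=(\int_0^tS_A(t-r)h(u_1(r))\,d\omega_1(r),0)$, and estimate each piece separately in the norm $\|\cdot\|_{\gamma,\rho,\sim}$. The general strategy is: the first two terms contribute to the $c_T(\|u_{01}\|+\|u_{02}\|+\|Z(\theta_\cdot\omega_2)\|_\gamma)$ part of \eqref{tteq}; the last two terms contribute to $C(\rho,\omega_1,T)(1+\|u\|_{\gamma,\rho,\sim})$, where the decay in $\rho$ comes from bounding Volterra-type integrals against the weight $e^{-\rho(t-r)}$.

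For $\tT_1$, I would combine the exponential decay $\|S_J(t)\|\le e^{-\lambda_J t}$ with the Hölder estimate \eqref{semi3} applied to $S_A$ and $S_B$, picking up negative powers of $s$ that are absorbed by the factor $s^\gamma$ in the $\|\cdot\|_{\gamma,\sim}$ norm; the weight $e^{-\rho(\cdot)}\le 1$ just makes the bound smaller. This yields an $O(\|u_{01}\|+\|u_{02}\|+\|Z(\omega_2)\|)$ bound, and $\|Z(\omega_2)\|$ is controlled by the already established finiteness of $\|Z(\theta_\cdot\omega_2)\|_\gamma$ on $[0,T]$. For $\tT_2$, the estimate is immediate: its $\gamma$-Hölder norm on $[0,T]$ is exactly $\|Z(\theta_\cdot\omega_2)\|_\gamma$ (and again the weight only helps), which was shown finite at the end of Section~\ref{ss31}.

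For $\tT_3$, I would use Lipschitz continuity of $F$ (H2) together with \eqref{semi1} and \eqref{semi3}. Writing
\[
e^{-\rho t}\|\tT_3(t)\|\le \int_0^t e^{-\rho(t-r)}\|S_J(t-r)\|\,\bigl(c_{DF}\,e^{-\rho r}\|u(r)\|+\|F(0)\|e^{-\rho r}\bigr)\,dr
\]
and bounding analogously the Hölder seminorm by splitting the difference $\tT_3(t)-\tT_3(s)$ as $\int_s^t\!S_J(t-r)F(u(r))\,dr+\int_0^s(S_J(t-r)-S_J(s-r))F(u(r))\,dr$, one obtains a prefactor of the form $\int_0^\infty e^{-\rho \tau}\tau^{-\sigma}\,d\tau$ for appropriate $\sigma<1$, which tends to $0$ as $\rho\to\infty$; this gives the desired $C(\rho,\omega_1,T)(1+\|u\|_{\gamma,\rho,\sim})$-type contribution.

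The main obstacle will be $\tT_4$. Here one uses the fractional-integral representation \eqref{z-integral} and the bound
\[
\Bigl\|\int_s^t S_A(t-r)h(u_1(r))\,d\omega_1(r)\Bigr\|\le c\,\ltn\omega_1\rtn_\beta\!\!\int_s^t\!\! (r-s)^{-\alpha}\bigl(\|S_A(t-\cdot)h(u_1(\cdot))\|_\infty+\cdots\bigr)\,dr,
\]
following Chen et al.\ \cite{chen2013pathwise}, after expanding $D_{0+}^\alpha\!\!\!\left[S_A(t-\cdot)h(u_1(\cdot))\right]$. Each contribution must then be estimated using (H3), the Lipschitz property of $h$ (Remark~\ref{cond-G}), and the smoothing estimates \eqref{semi1}--\eqref{semi4}. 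The technical heart of the argument is showing that after multiplying the whole expression by $e^{-\rho t}$ (and by $s^\gamma e^{-\rho t}$ for the Hölder part) and trading $\|h(u_1(\cdot))\|$-type terms for $e^{\rho(\cdot)}$ times the weighted norm of $u$, the remaining kernels are integrable convolutions of the form $\int_0^\infty e^{-\rho\tau}\tau^{-\mu}\,d\tau$ with $\mu=\alpha,\alpha+\gamma,\alpha+\gamma-\beta,\ldots<1$, which all go to $0$ as $\rho\to\infty$. This produces a factor $C(\rho,\omega_1,T)$ that is proportional to $\ltn\omega_1\rtn_\beta$ and vanishes as $\rho\to\infty$, completing the bound \eqref{tteq}. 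The careful bookkeeping of the singularities at $r=s$ and $r=t$ in the fractional derivatives, together with the weight $s^\gamma$ in the norm, is the place where one must be most attentive.
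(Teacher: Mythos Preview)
Your proof sketch is correct and follows essentially the same approach as the paper: the same four-piece decomposition (the paper formally lists five summands $\mathbf{I}_1,\ldots,\mathbf{I}_5$, but $\mathbf{I}_1+\mathbf{I}_2$ is your $\tT_1$), the same use of \eqref{semi1}--\eqref{semi4} and Remark~\ref{cond-G} for each block, and the same mechanism of pulling out $e^{-\rho(t-r)}$ to force the constants to vanish as $\rho\to\infty$. The only point where the paper is slightly more precise is that the vanishing kernels for the stochastic integral $\tT_4$ are not pure convolutions $\int_0^\infty e^{-\rho\tau}\tau^{-\mu}\,d\tau$ but two-sided Beta-type integrals $\sup_{\tau\in[0,T]}\tau^{d}\int_0^1 e^{-\rho\tau(1-v)}v^{a}(1-v)^{b}\,dv$ (this is Lemma~\ref{inq-krho}), reflecting the simultaneous singularities at $r=s$ and $r=t$ coming from the fractional derivative and the semigroup; your description captures the spirit but you should be aware that this is the exact form you will need when you fill in the details.
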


             \begin{proof}
             Despite the fact that a quite similar result was proved in
\cite[Lemma
             10]{chen2013pathwise} (but in that paper there is not any
drift and
             $\omega_{2}$-term), for the sake of completeness we give
the proof in
             Appendix \ref{sec:proof}.
             \end{proof}

             \begin{lemma}\label{tt'}
                  Let {\rm (H1)-(H3)} and {\rm (\ref{eq3})} hold and for any
$T>0$ there exists a $c_T>0$
             such that for $\rho>0,
             \omega_1\in C^{\beta}([0,T],V)$,  $\omega_2\in
             C^{\gamma^\prime}([0,T],V)$ and $u^1,u^2\in
             C^{\gamma,\sim}([0,T];V\times V)$,
                  \begin{align*}
\|\tT(u^1,\omega_1,\omega_2,u_0)&-\tT(u^2,\omega_1,\omega_2,u_0)\|_{\gamma,\rho,\sim}
             \\
             & \, \le c_T
(1+\|u^1\|_{\gamma,\rho,\sim}+\|u^2\|_{\gamma,\rho,\sim})\tilde{K}(\rho)\|u^1-u^2\|_{\gamma,\rho,\sim}
                  \end{align*}
             where $\lim_{\rho \rightarrow \infty} \tilde{K}(\rho) =0$.
             \end{lemma}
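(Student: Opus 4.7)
The plan is to subtract $\tT(u^1,\omega_1,\omega_2,u_0)-\tT(u^2,\omega_1,\omega_2,u_0)$ and observe that the deterministic initial data pieces $S_J(t)(u_{01},u_{02}-Z(\omega_2))^\top$ and the stationary O-U contribution $(0,Z(\theta_t\omega_2))^\top$ cancel identically. Consequently only two terms survive, namely
\[
I_F(t)=\int_0^tS_J(t-r)\bigl(F(u^1(r))-F(u^2(r))\bigr)\,dr
\]
and the pair $(I_h(t),0)^\top$ where
\[
I_h(t)=\int_0^tS_A(t-r)\bigl(h(u_1^1(r))-h(u_1^2(r))\bigr)\,d\omega_1(r).
\]
I will estimate each of these in the $\|\cdot\|_{\gamma,\rho,\sim}$-norm, mimicking the structure of the proof of Lemma \ref{tt} but with the extra care that second-order differences of $h$ have to be invoked.

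For $I_F$ the Lipschitz bound in (H2) immediately gives $\|F(u^1(r))-F(u^2(r))\|\le c_{DF}\|u^1(r)-u^2(r)\|$. Combining this with the semigroup bounds \eqref{semi1}--\eqref{semi3} applied to $S_J$, and inserting the weight $e^{-\rho(t-r)}$, I expect to obtain a bound of the form $c_T\,K_F(\rho)\|u^1-u^2\|_{\gamma,\rho,\sim}$, where $K_F(\rho)\to 0$ as $\rho\to\infty$; this is exactly the mechanism used for the analogous drift estimate in Lemma \ref{tt}, with the $1$ replaced by the Lipschitz factor. No prefactor $(1+\|u^1\|_{\gamma,\rho,\sim}+\|u^2\|_{\gamma,\rho,\sim})$ is needed here.

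The genuinely delicate step is $I_h$. Using the pathwise integral representation via Weyl fractional derivatives (\ref{deriva})--(\ref{z-integral}) one reduces the problem to controlling, pointwise in $r\in(0,t)$,
\[
\bigl\|D_{0+}^\alpha S_A(t-\cdot)\bigl[h(u_1^1(\cdot))-h(u_1^2(\cdot))\bigr][r]\bigr\|_{L_2(V)}\bigl\|D_{T-}^{1-\alpha}\omega_{1,T}[r]\bigr\|.
\]
To estimate the sup-part and the H\"older-seminorm-part of $I_h$ in $C^{\gamma,\sim}$ one has to bound both $\|h(u_1^1(r))-h(u_1^2(r))\|_{L_2(V)}$ and the cross-difference
\[
h(u_1^1(t))-h(u_1^1(s))-h(u_1^2(t))+h(u_1^2(s)).
\]
The first is controlled by $c_{Dh}\|u_1^1(r)-u_1^2(r)\|$ (second estimate of Remark \ref{cond-G}), contributing only $c_{Dh}\|u^1-u^2\|_{\gamma,\rho,\sim}$; the second, by the third estimate of Remark \ref{cond-G}, produces a term where $c_{D^2h}$ multiplies $\|u_1^1-u_1^2\|$ times $\|u_1^1(t)-u_1^1(s)\|+\|u_1^2(t)-u_1^2(s)\|$, and this is exactly where the factor $(1+\|u^1\|_{\gamma,\rho,\sim}+\|u^2\|_{\gamma,\rho,\sim})$ is generated once the Hölder seminorms are extracted. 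The main obstacle is organising these four types of increment integrals (involving $(t-s)^\gamma$, $(s-r)^{-\alpha-\gamma}$ and the singular factor $r^{-\beta}$ coming from $C^{\beta,\sim}$) so that, after pulling out $e^{-\rho(t-r)}$ and integrating, every contribution is dominated by $c_T\,\tilde K(\rho)$ with $\tilde K(\rho)\to 0$ as $\rho\to\infty$. This is a routine but tedious adaptation of the estimates in the proof of Lemma \ref{tt}, and I would relegate the book-keeping to the appendix, as was done there.

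Summing the bounds on $I_F$ and $I_h$ yields
\[
\|\tT(u^1,\omega_1,\omega_2,u_0)-\tT(u^2,\omega_1,\omega_2,u_0)\|_{\gamma,\rho,\sim}\le c_T\bigl(1+\|u^1\|_{\gamma,\rho,\sim}+\|u^2\|_{\gamma,\rho,\sim}\bigr)\tilde K(\rho)\|u^1-u^2\|_{\gamma,\rho,\sim},
\]
with $\tilde K(\rho)\to 0$ as $\rho\to\infty$, which is the claimed estimate; contractivity on balls in $C^{\gamma,\sim}$ is then obtained by choosing $\rho$ large, and this is exactly the form in which the lemma will be used to deduce existence and uniqueness via Banach's fixed point theorem.
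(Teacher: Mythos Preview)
Your proposal is correct and follows exactly the approach the paper intends: the paper's own proof consists of a single sentence observing that the $Z$-term cancels and referring to \cite[Lemma 11]{chen2013pathwise} for the remaining estimates, and you have spelled out precisely this argument (cancellation of the initial-data and O-U terms, Lipschitz estimate for the drift, second-order difference of $h$ via Remark \ref{cond-G} for the stochastic integral). One small slip: the singular weight coming from the $C^{\gamma,\sim}$-norm is $q^{-\gamma}$, not $r^{-\beta}$, but this does not affect the argument.
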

             \begin{proof}
                  The proof follows by using the same techniques as in
\cite[Lemma
             11]{chen2013pathwise}, because the $Z$-term is cancelled.
             \end{proof}
             \begin{lemma}\label{mildso}
              Let {\rm (H1)-(H3)} and  {\rm (\ref{eq3})}  hold and $u_0\in
V\times V $. Then, for
             every $T>0$, {\rm (\ref{eq-mix-path})} has a unique
solution $u$ in
             $C^{\gamma,\sim}([0,T];V\times V).$
             \end{lemma}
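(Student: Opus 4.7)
The plan is to obtain $u$ as the unique fixed point of the operator $\tT(\cdot,\omega_1,\omega_2,u_0)$ on a suitable closed ball of $C^{\gamma,\sim}([0,T];V\times V)$, equipped with the equivalent weighted norm $\|\cdot\|_{\gamma,\rho,\sim}$ for a sufficiently large parameter $\rho$. The entire design of this norm, together with the estimates of Lemmas~\ref{tt} and~\ref{tt'}, is tailored so that a Banach fixed point argument becomes available once the growth constant $C(\rho,\omega_1,T)$ and the Lipschitz constant $\tilde K(\rho)$ are driven to zero by choosing $\rho$ large.

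Concretely, I would first set
$$
M:=c_T\bigl(\|u_{01}\|+\|u_{02}\|+\|Z(\theta_\cdot\omega_2)\|_\gamma\bigr),
$$
which is finite because by Section~\ref{ss31} the process $t\mapsto Z(\theta_t\omega_2)$ has finite $\gamma$-H\"older norm on any compact interval. Fix the radius $R:=2M+2$ and let $B_R\subset C^{\gamma,\sim}([0,T];V\times V)$ denote the closed ball of radius $R$ in the $\|\cdot\|_{\gamma,\rho,\sim}$-norm; this is complete for any $\rho>0$. By Lemma~\ref{tt}, for $u\in B_R$,
$$
\|\tT(u,\omega_1,\omega_2,u_0)\|_{\gamma,\rho,\sim}\le M+C(\rho,\omega_1,T)(1+R),
$$
so choosing $\rho=\rho_0$ large enough that $C(\rho_0,\omega_1,T)(1+R)\le M+2$ yields $\tT(B_R)\subset B_R$. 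By Lemma~\ref{tt'}, for $u^1,u^2\in B_R$,
$$
\|\tT(u^1,\omega_1,\omega_2,u_0)-\tT(u^2,\omega_1,\omega_2,u_0)\|_{\gamma,\rho,\sim}\le c_T(1+2R)\tilde K(\rho)\|u^1-u^2\|_{\gamma,\rho,\sim},
$$
and enlarging $\rho_0$ once more to force $c_T(1+2R)\tilde K(\rho_0)<1/2$ turns $\tT$ into a strict contraction on $B_R$. Banach's fixed point theorem then produces a unique $u\in B_R$ satisfying $u=\tT(u,\omega_1,\omega_2,u_0)$, which is the desired mild solution of~\eqref{eq-mix-path}.

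For uniqueness on the whole space $C^{\gamma,\sim}([0,T];V\times V)$, suppose $u$ and $v$ are two solutions. Choose $\rho$ so large that $c_T(1+\|u\|_{\gamma,\rho,\sim}+\|v\|_{\gamma,\rho,\sim})\tilde K(\rho)<1$; applying Lemma~\ref{tt'} to the identities $u=\tT(u,\omega_1,\omega_2,u_0)$ and $v=\tT(v,\omega_1,\omega_2,u_0)$ then forces $\|u-v\|_{\gamma,\rho,\sim}=0$, hence $u=v$. The main obstacle is precisely that the Lipschitz constant in Lemma~\ref{tt'} depends on the sum $\|u^1\|_{\gamma,\rho,\sim}+\|u^2\|_{\gamma,\rho,\sim}$ of the inputs, so the contraction cannot be installed on the whole space in a single step: one must fix the radius $R$ first, and only afterwards pick $\rho=\rho(R)$ sufficiently large. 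The equivalence of $\|\cdot\|_{\gamma,\rho,\sim}$ and $\|\cdot\|_{\gamma,\sim}$ for each fixed $\rho$ guarantees that the fixed point indeed lies in $C^{\gamma,\sim}([0,T];V\times V)$, independently of the auxiliary parameter $\rho$ used in the construction.
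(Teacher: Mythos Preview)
Your proposal is correct and follows essentially the same approach as the paper: both establish existence and uniqueness via a Banach fixed point argument for $\tT(\cdot,\omega_1,\omega_2,u_0)$ on a closed ball in $(C^{\gamma,\sim}([0,T];V\times V),\|\cdot\|_{\gamma,\rho,\sim})$ with $\rho$ chosen large, using Lemma~\ref{tt} for self-mapping and Lemma~\ref{tt'} for the contraction. The paper's proof is merely a two-line sketch that defers the remaining details to \cite[Theorem~12]{chen2013pathwise}, whereas you spell out the choice of radius, the two-stage enlargement of $\rho$, and the global uniqueness argument explicitly.
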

             \begin{proof}
             According to (\ref{tteq}), for sufficiently large $\rho$
the centered
             and closed ball in
             $C^{\gamma,\sim} ([0,T]; V\times
             V),\|\cdot\|_{\gamma,\rho,\sim}$ which is mapped by
             $\tT(\cdot,\omega_1,\omega_2,u_0)$ into itself. Then,
             by Lemma \ref{tt'} by Theorem 12 in
             \cite{chen2013pathwise},  (\ref{eq-mix-path}) has a unique
solution $u$
             in $C^{\gamma,\sim}([0,T];V\times V)$.
             \end{proof}

             \section{Almost Sure Averaging for  Fast-Slow Evolution
Equations}
             \subsection{Description of the averaging problem} In this
subsection,
             our intention is to convert the original system
             (\ref{eq-org-x})-(\ref{eq-org-y}) into reduced systems
without fast
             component.
             Thus, we are interested in solving the following stochastic
system
             \begin{align}\label{slowpath}
                  dX^\eps(t)&=AX^\eps(t)\,dt+
             f(X^\eps(t),Y^\eps(t))\,dt+h(X^\eps(t))\,d\omega_1(t)
,X^\eps(0)=X_0\in
             V, \\
                  \label{fastpath}
                  dY^\eps(t)&=\frac{1}{\eps}BY^\eps(t)
             \,dt+\frac{1}{\eps}g(X^\eps(t),Y^\eps(t))\,dt
             +d\omega_{2,\eps}(t),Y^\eps(0)=Y_0 \in V
             \end{align}
             where $\omega_1$ is a path of the canonical FBM with Hurst
             exponent $H_1$, $\omega_2$  is a path of the canonical FBM
with Hurst
             exponent $H_2$, and $ H_1 \in(\frac12, 1),
H_2\in(1-H_1,1)$, which have
             been given in Section \ref{s2}. Then
$\omega_{2,\eps}(\cdot)=\omega_2(\frac{1}{\eps}\cdot)$. By the solution
             of (\ref{slowpath})-(\ref{fastpath}) on $[0,T]$, we mean a
process
             $(X^\eps,Y^\eps)$ which satisfies
             \begin{eqnarray}\label{slowpathint}
                  X^\eps(t)&=&S_{A}(t) X_0+\int_{0}^{t}S_{A}(t-r)f(
             X^\eps(r),Y^\eps(r))\,dr\cr
&&+\int_{0}^{t}S_{A}(t-r)h(X^\eps(r))\,d\omega_1(r)
             \end{eqnarray}
             and
             \begin{eqnarray}    \label{fastpathint}
                  Y^\eps(t)&=&S_{\frac{B}{\eps}}(t)
Y_0+\frac{1}{\eps}\int_{0}^{t}S_{\frac{B}{\eps}}(t-r)g(
             X^\eps(r),Y^\eps(r))\,dr\cr
&&+\int_{0}^{t}S_{\frac{B}{\eps}}(t-r)\,d\omega_{2,\eps}(r).
             \end{eqnarray}

             We assume that the following conditions for the
coefficients of the
             system are fulfilled.
                  \begin{enumerate}
             \item[(A1)] We assume for the operators $A, B$ the
conditions of (H1).
             \item [(A2)]
             The coefficient $F(x,y)$ from (H2) is now specialized by
$(f(x,y),1/\eps
             g(x,y))$.
             The coefficients $f(x,y): V\times V \rightarrow V$ of
             (\ref{eq-org-x}) and $g(x,y): V \times V \rightarrow V$ of
             (\ref{eq-org-y}) are globally Lipschitz continuous in $x,y$
i.e., there
             exists a positive constant $C_{1}$ and let $C_2=\|g(0,0)\|$
             such that
                      \begin{eqnarray*}
\|f(x_{1},y_{1})-f(x_{2},y_{2})\|+\|g(x_{1},y_{1})-g(x_{2},y_{2})\|
                      &\le& C_{1}(\|x_{1}-x_{2}\|+\|y_{1}-y_{2}\|)\cr
             \|g(x,y)\|&\le&C_{1}(\|x\|+\|y\|)+C_2
                      \end{eqnarray*}
                      for all $x_1,y_1,x_2,y_2,x,y\in V$.
                      \item[(A3)] $h$ satisfies the conditions of (H3).
                      \item [(A4)] $f$ is bounded.
                  \end{enumerate}

             \begin{lemma}
                  Let {\rm (A1)-(A3)}  and {\rm (\ref{eq3})} hold. For any
             $X_0\in V, Y_0\in V$ and $T>0$,
             there is a unique solution $(X^\eps,Y^\eps)$ to {\rm
             (\ref{slowpathint})-(\ref{fastpathint})} in
$C^{\gamma,\sim}([0,T];V\times V)$.
             \end{lemma}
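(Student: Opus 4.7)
The plan is to reduce the statement directly to Lemma \ref{mildso}. For each fixed $\eps>0$, I would recast the slow-fast pair (\ref{slowpathint})-(\ref{fastpathint}) as an instance of the mixed evolution equation (\ref{eq-mix-path}) on $V\times V$ by choosing
\[
J^\eps=\begin{pmatrix} A & 0 \\ 0 & \tfrac{1}{\eps}B \end{pmatrix},\quad
F^\eps(x,y)=\bigl(f(x,y),\tfrac{1}{\eps}g(x,y)\bigr),\quad
G(u_1)=\begin{pmatrix} h(u_1) & 0 \\ 0 & \mathrm{id}\end{pmatrix},
\]
and taking the driving pair to be $(\omega_1,\omega_{2,\eps})$ with $\omega_{2,\eps}(\cdot)=\omega_2(\cdot/\eps)$. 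With these identifications the mild representation (\ref{eq4}) becomes exactly (\ref{slowpathint})-(\ref{fastpathint}), where the stationary Ornstein-Uhlenbeck term attached to $B/\eps$ is the $Z^\eps$ constructed in Section \ref{ss31} (and related to $Z$ via Lemma \ref{z1-equal}).

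Next I would verify (H1)-(H3) for this triple. Assumption (A1) guarantees that $-J^\eps$ is closed, positive and symmetric with compact inverse, so $J^\eps$ generates the block-diagonal exponential analytic semigroup $S_{J^\eps}(t)=\mathrm{diag}(S_A(t),S_B(t/\eps))$ on $V\times V$ with decay rate $\lambda_{J^\eps}=\min(\lambda_A,\lambda_B/\eps)>0$, establishing (H1). By (A2), multiplying $g$ by $1/\eps$ only rescales the Lipschitz constant, so $F^\eps$ is Lipschitz on $V\times V$ with constant at most $C_1(1+1/\eps)$, giving (H2). Condition (H3) is literally (A3). Moreover $\omega_{2,\eps}$ is obtained from $\omega_2$ by time rescaling and still lies in $C^{\gamma^\prime}([0,T];V)$ with $\ltn\omega_{2,\eps}\rtn_{\gamma^\prime,0,T}\le \eps^{-\gamma^\prime}\ltn\omega_2\rtn_{\gamma^\prime,0,T/\eps}<\infty$. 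The compatibility condition (\ref{eq3}) between $H_1$ and $H_2$ is intrinsic to the Hurst parameters and is unaffected by the rescaling, so the admissible choice of $\alpha,\beta,\gamma,\gamma^\prime$ required in Lemma \ref{mildso} is unchanged.

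With all structural hypotheses of Section \ref{sec3.2} in force for the $\eps$-dependent data, Lemma \ref{mildso} applied with initial datum $u_0=(X_0,Y_0)$ yields a unique solution $u=(X^\eps,Y^\eps)\in C^{\gamma,\sim}([0,T];V\times V)$ to the mild form, which is precisely (\ref{slowpathint})-(\ref{fastpathint}). The only real obstacle is one of book-keeping: every constant entering the contraction estimates of Lemmas \ref{tt} and \ref{tt'} depends badly on $\eps$ (through $\lambda_B/\eps$, $C_1/\eps$ and $\eps^{-\gamma^\prime}$), so the radius of the invariant ball and the contraction threshold deteriorate as $\eps\to 0$. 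This is harmless here because existence and uniqueness for a given $\eps>0$ on the fixed interval $[0,T]$ follow once $\rho$ is chosen large enough depending on $\eps$, which Lemma \ref{tt} permits. Uniformity in $\eps$ is not claimed at this stage and will be the separate concern of the averaging analysis in the subsequent subsections.
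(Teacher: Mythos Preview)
Your proposal is correct and follows exactly the paper's approach: the paper's own proof consists of the single sentence ``This is just the special case of \cref{mildso}, we omit the proof here.'' You have simply spelled out in detail the identifications $J^\eps,\,F^\eps,\,G,\,(\omega_1,\omega_{2,\eps})$ and the verification of (H1)--(H3) that the paper leaves implicit, together with the harmless observation that the $\eps$-dependence of the constants does not obstruct existence for fixed $\eps$.
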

             \begin{proof}
             This is just the special case of \cref{mildso}, we omit the
proof here.
             \end{proof}

             We present now the main result of this article.
             \begin{theorem}\label{mainthm}
                  Let {\rm (A1)-(A4)} and {\rm (\ref{eq3})} hold and assume
             further that $\lambda_B>C_1$.
             For any $X_0\in V$, as $\epsilon\rightarrow0$ the solution
of {\rm
             (\ref{slowpath})} converges to $\bar X$ which solves
following {\rm
             (\ref{x-ave})}. That is, we have almost
surely
             $$\lim\limits_{\epsilon\rightarrow0}\| X^\eps-\bar
             X\|_{\gamma,\sim} =0$$
                  where $\bar X$ is the mild solution to the averaged
equation
                  \begin{eqnarray}\label{x-ave}
                      \bar X(t)=S_{A}(t) X_0+\int_{0}^{t}S_{A}(t-r)\bar
f(\bar
             X(r))\,dr+\int_{0}^{t}S_{A}(t-r)h(\bar X(r))\,d\omega_1(r)
                  \end{eqnarray}
             where the Lipschitz continuous function $\bar f$ will be
given in {\rm
             (\ref{conave1})} later.
             \end{theorem}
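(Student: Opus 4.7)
The plan is to follow the classical Khasminskii time-discretization scheme but rewritten pathwise using the random fixed point generated by the fast dynamics with frozen slow input. The analysis will ultimately reduce the almost sure convergence of $X^\eps$ to a Gronwall estimate in the $\|\cdot\|_{\gamma,\sim}$ norm driven by two residual quantities that both vanish $\PP$-a.s.

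First I would introduce, for each frozen $x\in V$, the auxiliary equation
\begin{equation*}
dY^{x}(t)=\frac{1}{\eps}BY^{x}(t)\,dt+\frac{1}{\eps}g(x,Y^{x}(t))\,dt+d\omega_{2,\eps}(t),
\end{equation*}
and realise it as a random dynamical system over $(\Omega_2,\fF_2,\PP_{H_2},\theta)$. The hypothesis $\lambda_B>C_1$, combined with the $O$-$U$ construction of Section \ref{ss31}, should give a unique exponentially attracting random fixed point $\eta(x,\omega_2)$: any two solutions $Y^{x}_1,Y^{x}_2$ satisfy $\|Y^{x}_1(t)-Y^{x}_2(t)\|\le e^{-(\lambda_B-C_1)t/\eps}\|Y^{x}_1(0)-Y^{x}_2(0)\|$, which after passing to $-\infty$ in pulled-back form produces $\eta$; Lipschitz dependence of $\eta$ on $x$ follows from the same contraction. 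This is promised as Section 4.2 in the paper, so I would use it as a black box. Define then the averaged drift
\begin{equation*}
\bar f(x)\,:=\,\EE_{\PP_{H_2}}\bigl[f(x,\eta(x,\cdot))\bigr],
\end{equation*}
whose Lipschitz property in $x$ follows from (A2)--(A4) and the Lipschitz continuity of $\eta(\cdot,\omega_2)$. Existence and uniqueness of $\bar X$ solving \eqref{x-ave} then follow from Lemma \ref{mildso}.

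Next I would partition $[0,T]$ with step $\Delta=\Delta(\eps)$ chosen so that $\Delta\to 0$ but $\Delta/\eps\to\infty$, and write $t_k=k\Delta$. For each $k$ I would introduce the auxiliary fast process $\hat Y^\eps$ on $[t_k,t_{k+1}]$ frozen at $X^\eps(t_k)$ and starting from $Y^\eps(t_k)$; by the exponential contraction this satisfies
\begin{equation*}
\|Y^\eps(t)-\eta(X^\eps(t_k),\theta_{t/\eps}\omega_2)\|\le e^{-(\lambda_B-C_1)(t-t_k)/\eps}\bigl(\|Y^\eps(t_k)\|+\|\eta(X^\eps(t_k),\theta_{t_k/\eps}\omega_2)\|\bigr)+r_k(\eps),
\end{equation*}
where $r_k(\eps)$ absorbs the drift error $\int_{t_k}^{t}\frac{1}{\eps}(g(X^\eps(r),Y^\eps(r))-g(X^\eps(t_k),Y^\eps(r)))dr$, controlled by the H\"older regularity of $X^\eps$ on $[t_k,t_{k+1}]$. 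Subtracting \eqref{slowpathint} from \eqref{x-ave} and inserting and removing $\int S_A(t-r)\bar f(X^\eps(t_k))dr$ and $\int S_A(t-r)f(X^\eps(t_k),\eta(X^\eps(t_k),\theta_{r/\eps}\omega_2))dr$ on each slab, I obtain the representation
\begin{equation*}
X^\eps(t)-\bar X(t)=\int_0^t S_A(t-r)\bigl(\bar f(X^\eps(r))-\bar f(\bar X(r))\bigr)dr+\int_0^t S_A(t-r)\bigl(h(X^\eps(r))-h(\bar X(r))\bigr)d\omega_1(r)+R^\eps(t),
\end{equation*}
with $R^\eps$ collecting (i) the discretization error of $\bar f\circ X^\eps$, (ii) the exponential-attraction error from $Y^\eps$ versus $\eta$, and (iii) the ``frozen ergodic'' error $\frac{1}{\Delta}\int_{t_k}^{t_{k+1}}f(x,\eta(x,\theta_{r/\eps}\omega_2))dr-\bar f(x)$.

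The three pieces of $R^\eps$ must each be shown to tend to $0$ almost surely in $\|\cdot\|_{\gamma,\sim}$. Piece (i) is controlled by the H\"older regularity of $X^\eps$ proven via Lemma \ref{tt} together with Lipschitzness of $\bar f$. Piece (ii) is handled by the contraction estimate above combined with Lemma \ref{l2}(2), which bounds the auxiliary stationary process by $o(\eps^{-1})$ a.s. Piece (iii) is where the \emph{a.s.} (as opposed to in-probability) averaging really enters: it requires a pointwise ergodic theorem for the stationary process $s\mapsto f(x,\eta(x,\theta_s\omega_2))$ applied on the rescaled interval $[t_k/\eps,t_{k+1}/\eps]$ of length $\Delta/\eps\to\infty$, uniformly (in a suitable sense) in $x$ ranging over a precompact set along the trajectory of $X^\eps$. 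Once all three are sent to $0$, applying Lemma \ref{tt'}-type estimates to the stochastic integral and a Gronwall argument in $\|\cdot\|_{\gamma,\rho,\sim}$ with $\rho$ large closes the proof. The main obstacle will be piece (iii): establishing the ergodic convergence uniformly in $x$ along the (random) path $X^\eps$ with merely a.s. control, which forces careful use of $\theta$-invariance of the underlying measure, the Lipschitz continuity of $x\mapsto\eta(x,\omega_2)$, and the sublinear growth of $\|\eta(x,\theta_\cdot\omega_2)\|$ inherited from Lemma \ref{l2}(1).
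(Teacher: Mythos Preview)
Your overall strategy---Khasminskii discretization, exponential attraction to the random fixed point of the frozen fast dynamics, pathwise ergodic theorem, and a Gronwall argument in the weighted norm $\|\cdot\|_{\gamma,\rho,\sim}$---is exactly the paper's route, and you correctly single out piece (iii) as the heart of the matter. Two technical choices in the paper, however, differ from yours and materially simplify that step.

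First, the paper does \emph{not} take $\Delta=\Delta(\eps)$ with a joint scaling. It fixes $\delta$ depending only on the target accuracy $\mu$, so that all ``small-$\delta$'' errors (your pieces (i) and (ii)) are $\le C\delta^{\tilde\sigma}<\lambda\mu$; only afterwards is $\eps\to 0$ taken. With $\delta$ fixed the number of slabs $\lfloor T/\delta\rfloor$ is finite, so the ergodic convergence need only be checked at finitely many evaluation points, and the maximum over $k$ causes no trouble. Your scaling $\Delta\to 0$ forces you to control an unbounded number of ergodic averages simultaneously, which is precisely the uniformity headache you flag.

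Second, and more importantly, the paper arranges its decomposition so that the ergodic term is evaluated at $\bar X(k\delta)$, not at $X^\eps(t_k)$. Since $\bar X$ is $\eps$-independent, this decouples the ergodic limit from the slow process you are trying to control. The bridge back to $X^\eps$ is done through separate Lipschitz terms ($I_2$--$I_5$ in the paper) that are bounded by $C\delta^{\tilde\sigma}$. Combined with Lemma~\ref{l3}, which upgrades the Birkhoff convergence to hold for \emph{all} $x\in V$ on a single $\theta$-invariant full-measure set (via a countable dense set and the uniform Lipschitz constant $C_1/(\lambda_B-C_1)$ of $x\mapsto Y_F^1(\omega_2,x)$), this sidesteps the precompact-trajectory argument you propose. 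One further detail you omit: in infinite dimensions the paper applies the ergodic theorem to $(-A)^{-\sigma}(f(x,Y_F^1(\theta_r\omega_2,x))-\bar f(x))$ rather than to the raw integrand, the compact smoothing being what makes the Hilbert-space Birkhoff theorem of \cite{chueshov2005averaging} applicable; the missing powers of $(-A)^{\sigma}$ are then absorbed by the analytic semigroup via the estimate $\sum_k\|(-A)^\sigma S_A(t-k\delta)\|\le C\delta^{-1}$.
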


             To prove Theorem \ref{mainthm}, we first obtain the random
fixed point
             for the RDS generated by (\ref{fastpath}) in Section 4.2.
Then, we give
             the proof of Theorem \ref{mainthm} in Section 4.3.

             \subsection{Random fixed points}
             To describe the behavior of the fast variable we have to
introduce a  RDS.
             Let $B$ be a separable Banach space, and
$(\hat\Omega,\hat\fF,\hat\PP,\theta)$ be an
             ergodic  MDS. A measurable mapping
             \[
             \phi:\RR^+\times\hat\Omega\times B\to B
             \]
             is called RDS if the cocycle property holds
             \[
             \phi(t+\tau,\omega,
b)=\phi(t,\theta_\tau\omega,\cdot)\circ\phi(\tau,\omega,b)\quad
             \text{for }t,\,\tau\in \RR^+,\,\omega\in\hat\Omega,\, b\in B.
             \]
             and $\phi(0,\omega,\cdot)={\rm id}_B$.
             For details we refer to Arnold \cite{arnold1998random}.

             Consider the parameterized equation
             \begin{equation}\label{eq2i}
                  dY^{\eps,x}(t)= \frac{1}{\eps}BY^{\eps,x}(t)
             \,dt+\frac{1}{\eps}g(x,Y^{\eps,x}(t))\,dt +d\omega_{2,\eps}(t)
             \end{equation}
             where $x\in V$ is a fixed but arbitrary element.
Straightforwardly this
             equation interpreted in mild sense generates an RDS for
every $x\in V$.
             We would like to show that under particular conditions on
$g$ the RDS
             generated by this equation has a random fixed point.

             \begin{definition}
                  Let $\phi$ be an RDS over an ergodic metric dynamical
system
             $(\hat\Omega,\hat\fF,\hat\PP,\theta)$ with values in the
separable Banach space $B$.
             A random variable $Y:\omega\to B$ is called random fixed
point for $\phi$ if
                  \[
                  \phi(t,\omega,Y(\omega))=Y(\theta_t\omega)
                  \]
                  for all $t\ge 0$  on a
$(\theta_t)_{t\in\mathbb{R}}$-invariant set
             of full measure.
             \end{definition}

             In particular when $Y$ is a random fixed point for an RDS
generated by a
             differential equation  then  the function $t\to
Y(\theta_t\omega)$  is a
             stationary solution of the equation (\ref{eq2i}).

             We formulate conditions for the existence of a random fixed
point.
             Recall that a random variable $X(\omega)\ge 0$ is called
             tempered on a $(\theta_t)_{t\in\mathbb{R}}$-invariant set
of full measure if
             \[
             \lim_{t\to\pm\infty}\frac{\log^+ X(\theta_t\omega)}{|t|}=0.
             \]
                A family of sets $(C(\omega))_{\omega\in\hat\Omega},\,
             C(\omega)\not=\emptyset$ and closed is called tempered
random set if
             ${\rm distance}_B(y,C(\omega))$ for all $y\in B$ is
measurable, it is
             called tempered if
             \[
             X(\omega)=\sup_{x\in C(\omega)}\|x\|_B
             \]
             is tempered. We note that for every random set there exists
a sequence
             of random variables $(x_n)_{n\in\mathbb{N}}$ so that
             \[
C(\omega)=\overline{\bigcup_{n\in\mathbb{N}}\{x_n(\omega)\}}.
             \]

             Hence the above supremum defines a random variable.
Random
             variables $y(\omega)\in C(\omega)$ for $\omega\in
\hat\Omega$ are called
             selectors of $C$.

             Let us present here an existence theorem random fixed points.

             \begin{lemma}
                  Suppose that the RDS $\phi$ has a random forward
invariant set
             closed $C$ which is tempered:
                  \[
                  \phi(t,\omega,C(\omega))\subset C(\theta_t\omega)\quad
\text{for
             }t\ge 0,\,\omega\in\hat\Omega.
                  \]
                  Let
                  \[
                  k(\omega)=\sup_{x\not=y\in
C(\omega)}\log\bigg(\frac{\|\phi(1,\omega,x)-\phi(1,\omega,y)\|}{\|x-y\|}\bigg)
                  \]
                  so that $\mathbb{E} k<0$. The random variable
                  \begin{eqnarray}\label{phi}
                  \omega\mapsto\sup_{t\in
[0,1]}\|\phi(t,\theta_{-t}\omega,y(\theta_{-t}\omega))\|
                  \end{eqnarray}
                  is assumed to be tempered for any selector $y$ from
$C$. Then the
             RDS $\phi$ has a random fixed point $Y(\omega)\in
C(\omega)$ which is
             unique. In addition $\|Y(\omega)\|$ is tempered.
                  This random fixed point is pullback and forward
attracting:
                  \begin{equation}\label{eq6}
\lim_{t\to\infty}\|\phi(t,\theta_{-t}\omega,y(\theta_{-t}\omega))-Y(\omega)\|=0,
             \,
\lim_{t\to\infty}\|\phi(t,\omega,y(\omega))-Y(\theta_t\omega)\|=0
                  \end{equation}
                  with exponential speed for every measurable selector
$y(\omega)\in
             C(\omega)$.
             \end{lemma}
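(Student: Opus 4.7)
The plan is to construct $Y(\omega)$ as the pullback limit of trajectories starting from $C$. Pick a measurable selector $y$ of $C$ (which exists thanks to the representation $C(\omega)=\overline{\bigcup_n\{x_n(\omega)\}}$ noted before the statement) and set
\begin{equation*}
a_n(\omega):=\phi(n,\theta_{-n}\omega,y(\theta_{-n}\omega)),\qquad n\in\NN.
\end{equation*}
Forward invariance gives $a_n(\omega)\in C(\omega)$. Using the cocycle identity, $a_{n+1}(\omega)=\phi(n,\theta_{-n}\omega,z_n(\omega))$ where $z_n(\omega):=\phi(1,\theta_{-(n+1)}\omega,y(\theta_{-(n+1)}\omega))\in C(\theta_{-n}\omega)$. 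Since both $z_n(\omega)$ and $y(\theta_{-n}\omega)$ lie in $C(\theta_{-n}\omega)$, iterating the one-step contraction hypothesis $n$ times, which is legitimate because forward invariance keeps every intermediate point inside $C$, yields
\begin{equation*}
\|a_{n+1}(\omega)-a_n(\omega)\|\le\exp\Bigl(\sum_{i=1}^{n}k(\theta_{-i}\omega)\Bigr)\,\|z_n(\omega)-y(\theta_{-n}\omega)\|.
\end{equation*}

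By Birkhoff's ergodic theorem applied to $k$, $\tfrac1n\sum_{i=1}^n k(\theta_{-i}\omega)\to\EE k<0$ a.s., so the exponential prefactor decays geometrically. The factor $\|z_n(\omega)-y(\theta_{-n}\omega)\|$ is controlled by the tempered random variable (\ref{phi}) evaluated at $\omega'=\theta_{-n}\omega$ plus $\|y(\theta_{-n}\omega)\|$, both of which grow at most subexponentially in $n$ by the temperedness of $C$ and of (\ref{phi}). Hence $\sum_n\|a_{n+1}-a_n\|<\infty$ a.s. and $a_n(\omega)$ converges geometrically to some $Y(\omega)\in C(\omega)$ by closedness; measurability of $Y$ follows from that of each $a_n$.

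To verify $\phi(t,\omega,Y(\omega))=Y(\theta_t\omega)$ for arbitrary $t\ge 0$, apply the cocycle twice:
\begin{equation*}
\phi(t,\omega,a_n(\omega))=\phi(t+n,\theta_{-n}\omega,y(\theta_{-n}\omega))=\phi(n,\theta_{t-n}\omega,\phi(t,\theta_{-n}\omega,y(\theta_{-n}\omega))),
\end{equation*}
and compare with $a_n(\theta_t\omega)=\phi(n,\theta_{t-n}\omega,y(\theta_{t-n}\omega))$. Both inputs to $\phi(n,\theta_{t-n}\omega,\cdot)$ lie in $C(\theta_{t-n}\omega)$ by forward invariance, so the iterated contraction combined with Birkhoff along the shifted sequence $(k(\theta_{t-i}\omega))_i$ and the tempered growth of the arguments drive the difference to zero. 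Together with continuity of $\phi(t,\omega,\cdot)$ on $C(\omega)$ (a consequence of $k(\omega)<\infty$), this yields the fixed-point identity. Uniqueness is obtained by the same estimate applied to two candidate fixed points $Y_1,Y_2\in C$: the contraction factor goes to zero while $\|Y_i(\theta_{-n}\omega)\|$ remains subexponential by temperedness of $C$, forcing $Y_1=Y_2$. Temperedness of $\|Y(\omega)\|$ is inherited from $C$, and both limits in (\ref{eq6}) follow from the identical telescope-and-contract computation, with exponential speed given by the Birkhoff rate $\EE k<0$.

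The principal technical obstacle is the bookkeeping of almost-sure events: the Birkhoff convergence, the subexponential bounds from temperedness of $C$ and of (\ref{phi}), and the forward-invariance selections must be realized on a single $(\theta_t)_{t\in\RR}$-invariant set of full measure, and this set must be independent of the base selector $y$ in order for the limit $Y$ to be canonical. A minor secondary point is checking that $k\in L^1$ so Birkhoff applies, which is implicit in writing $\EE k<0$; if only $k^-\in L^1$ is available one must replace Birkhoff by Kingman's subadditive theorem applied to the cocycle $\log\|\phi(n,\omega,\cdot)\|_{\mathrm{Lip}(C(\omega))}$.
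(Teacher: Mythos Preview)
The paper does not prove this lemma; immediately after the statement it simply writes ``For the definition and proof we refer to Chueshov and Schmalfuss [Definition 3.1.21, Theorem 4.2.1].'' Your proposal is exactly the standard argument one finds in that reference: form the pullback iterates $a_n(\omega)=\phi(n,\theta_{-n}\omega,y(\theta_{-n}\omega))$, show they are Cauchy by combining Birkhoff's theorem for $k$ with the subexponential growth supplied by the temperedness assumptions, and then pass to continuous time via the cocycle. So the two approaches coincide.

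One point worth tightening: your claim that ``continuity of $\phi(t,\omega,\cdot)$ on $C(\omega)$'' follows from $k(\omega)<\infty$ is only justified for $t=1$; the definition of $k$ says nothing about $\phi(t,\omega,\cdot)$ for $t\in(0,1)$. In the cited reference this is covered by working in the category of \emph{continuous} RDS from the outset (the paper's definition here only says ``measurable'', but the application in Theorem~\ref{t1i} is to a mild-solution flow which is Lipschitz in the state, so no harm is done). Your final paragraph identifying the bookkeeping of $\theta$-invariant full-measure sets and the integrability of $k$ as the main technicalities is accurate and matches what the cited proof has to handle.
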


             For the definition and proof we refer to Chueshov and Schmalfuss \cite[Definition
3.1.21, Theorem 4.2.1]{chueshov2020synchronization}. Let us check if the assumptions for our
             problem are satisfied.

             \medskip

             \begin{theorem}\label{t1i}
Consider {\rm (\ref{eq2i})}interpreted in the mild sense. Then for every $x\in
V$ and $\eps>0$ the RDS generated by {\rm (\ref{eq2i})} has a
tempered pullback
             and forward exponentially attracting random fixed point
             $Y^{\eps}_F(\omega_2,x)$. The exponential rate of forward and pullback convergence to
the random
             fixed point is given by $\frac{\lambda_B-C_1}{\eps}$.
             \end{theorem}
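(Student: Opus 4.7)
The plan is to verify the three hypotheses of the random fixed point existence lemma stated just above, applied to the RDS $\phi$ generated by the mild form of (\ref{eq2i}): a tempered forward-invariant closed random set $C$, strict negativity of $\EE k$, and temperedness of (\ref{phi}) along selectors of $C$.

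First I would decouple the additive noise by the substitution $\tilde Y(t) = Y^{\eps,x}(t) - Z^\eps(\theta_t \omega_2)$, where $Z^\eps$ is the stationary Ornstein--Uhlenbeck process from Section \ref{ss31}. Subtracting the mild equation (\ref{eqa-z1}) for $Z^\eps$, the new process $\tilde Y$ satisfies the pathwise mild equation
\begin{equation*}
\tilde Y(t) = S_{B/\eps}(t)\bigl(y - Z^\eps(\omega_2)\bigr) + \tfrac{1}{\eps}\int_0^t S_{B/\eps}(t-r)\, g\bigl(x, \tilde Y(r) + Z^\eps(\theta_r \omega_2)\bigr)\,dr,
\end{equation*}
in which the only randomness enters through the continuous, tempered driver $Z^\eps(\theta_\cdot \omega_2)$. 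Global Lipschitz continuity of $g$ in its second argument gives a unique mild solution and hence a well-defined RDS $\phi$ over the MDS $(\Omega_2, \fF, \PP_{H_2}, \theta)$.

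For the contraction, apply the semigroup bound $\|S_{B/\eps}(t)\| \le e^{-\lambda_B t/\eps}$ and the Lipschitz estimate for $g$ to the difference of two mild solutions, then Gronwall's inequality, yielding
\begin{equation*}
\|\phi(t, \omega_2, y_1) - \phi(t, \omega_2, y_2)\| \le e^{-(\lambda_B - C_1)t/\eps}\|y_1 - y_2\|.
\end{equation*}
Under the standing hypothesis $\lambda_B > C_1$ this gives the deterministic negative value $k(\omega_2) \le -(\lambda_B - C_1)/\eps$, so $\EE k < 0$ trivially. The same Gronwall argument applied against the origin, combined with $\|g(x,y)\| \le C_1(\|x\| + \|y\|) + C_2$, produces an a priori bound on $\|\tilde Y(t)\|$ whose driving term is $\int_0^t e^{-(\lambda_B - C_1)(t-r)/\eps}\|Z^\eps(\theta_r \omega_2)\|\,dr$. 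Translating back to $Y^{\eps, x}$ and letting $t \to \infty$ along $\theta_{-t}$ suggests the pullback radius
\begin{equation*}
R(\omega_2, x) = \|Z^\eps(\omega_2)\| + \tfrac{C_1 \|x\| + C_2}{\lambda_B - C_1} + \tfrac{C_1}{\eps}\int_{-\infty}^0 e^{(\lambda_B - C_1)q/\eps}\|Z^\eps(\theta_q \omega_2)\|\,dq,
\end{equation*}
and the closed random ball $C(\omega_2) = \{y \in V : \|y\| \le R(\omega_2, x)\}$ is forward-invariant by construction.

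The main obstacle is proving temperedness of $R$ and of the selector quantity (\ref{phi}). Convergence of the integral in $R$ relies on the dissipative gap $\lambda_B - C_1 > 0$, and its temperedness follows from Lemma \ref{l2}(2), which on a full-measure $\theta$-invariant subset of $\Omega_2^\ast$ delivers the subexponential growth of $q \mapsto \|Z^\eps(\theta_q \omega_2)\|$ needed to keep the weighted integral tempered. The same subexponential bound, combined with the contraction estimate on $[0,1]$, yields temperedness of (\ref{phi}). Once the three hypotheses are verified, the existence lemma produces the unique random fixed point $Y^\eps_F(\omega_2, x) \in C(\omega_2)$ with tempered norm, and the claimed forward and pullback exponential attraction at rate $(\lambda_B - C_1)/\eps$ is precisely the contraction estimate of Step 2 applied with $y_2$ replaced by $Y^\eps_F(\omega_2, x)$ or its shifted version $Y^\eps_F(\theta_{-t}\omega_2, x)$.
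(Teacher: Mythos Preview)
Your proposal is essentially the paper's argument: subtract the stationary OU process $Z^\eps$, derive the contraction $e^{-(\lambda_B-C_1)t/\eps}$ via Gr\"onwall, build a tempered random ball from the weighted integral of $\|Z^\eps(\theta_\cdot\omega_2)\|$, check temperedness through Lemma~\ref{l2}, and invoke the fixed-point lemma. The paper differs only in bookkeeping: it obtains the absorbing radius via a one-dimensional comparison equation (citing Chueshov--Schmalfuss for the scalar random fixed point) rather than a direct Gr\"onwall bound, and---more importantly---it applies the fixed-point lemma to the \emph{conjugate} RDS $\tilde\phi^{\eps,x}$ acting on $\tilde Y$, for which the ball centred at $0$ with radius $\tilde R^{\eps,x}$ is the natural invariant set, and only afterwards translates back via $Y_F^\eps=\tilde Y_F^\eps+Z^\eps$.

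The one place your write-up slips is the claim that the ball $\{\|y\|\le R(\omega_2,x)\}$ is ``forward-invariant by construction'' for the \emph{original} RDS $\phi$. From $\|y\|\le R(\omega_2,x)=\|Z^\eps(\omega_2)\|+\tilde R(\omega_2)$ you only get $\|y-Z^\eps(\omega_2)\|\le 2\|Z^\eps(\omega_2)\|+\tilde R(\omega_2)$, which overshoots the radius for which the $\tilde\phi$-invariance computation closes; a ball centred at zero for $\phi$ is not in general forward-invariant. The clean fix is precisely the paper's choice: verify the hypotheses of the lemma for $\tilde\phi^{\eps,x}$ with the ball $B(0,\tilde R^{\eps,x}(\omega_2))$ (equivalently, for $\phi$ with balls centred at $Z^\eps(\omega_2)$), then conjugate. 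Everything else in your outline is correct and matches the paper.
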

             \begin{proof}

               We set
             \begin{equation}\label{gtilde}
             \tilde
g_\eps(x,y,\omega_2)=g(x,y+Z^\eps(\omega_2))=g\Big(x,y+\frac{1}{\eps}\int_{-\infty}^0BS_{\frac{B}{\eps}}(-q)\omega_{2,\eps}(q)dq\Big).
             \end{equation}

             Then the equation
             \[
             d\tilde Y^{\eps,x}(t) = \frac{1}{\eps}B\tilde Y^{\eps,x}(t)
             \,dt+\frac{1}{\eps}\tilde g_\eps(x,\tilde
             Y^{\eps,x}(t),\theta_t\omega_2)\,dt
             \]
             has a unique mild solution forming the RDS $\tilde
\phi^{\eps,x}$. This
             RDS generates by conjugation an RDS for the mild version of
             \eqref{eq2i}. In particular we have
             \[
             Y^{\eps,x}(t,\omega_2)-Z^\eps(\theta_t\omega_2)=\tilde
             Y^{\eps,x}(t,\omega_2).
             \]
             We obtain  by (A2)
             \[
             \|\tilde g_\eps(x,y,\omega_2)\|\le C_1\|y\|+\|\tilde
g_\eps(x,0,\omega_2)\|
             \]
             where
             \[
             \|\tilde g_\eps(x,0,\omega_2)\|\le
C_1(\|Z^\eps(\omega_2)\|+\|x\|)+C_2.
             \]

             Let $\tilde y^{\eps,x}$ be the solution of the  one
dimensional equation
             \[
             \tilde y^{\eps,x}(t)=e^{-\frac{\lambda_B}{\eps}
t}y_0+\int_0^te^{-\frac{\lambda_B}{\eps}(t-r)}\frac{1}{\eps}\bigg(C_1\tilde
y^{\eps,x}(r)+C_1(\|Z^\eps(\omega_2)\|+\|x\|)+C_2)\bigg)dr
             \]
             which generates an one dimensional RDS with tempered
forward and random
             fixed point $\tilde y^{\eps,x}_F(\theta_t\omega_2)$, see Chueshov and
Schmalfuss
             \cite[Theorem 3.1.23]{chueshov2020synchronization}. In addition, by a
comparison
             argument
             \[
             \|\tilde Y^{\eps,x}(t)\|\le \tilde y^{\eps,x}(t)
\quad\text{if }t\ge
             0,\,y_0\ge\|\tilde Y(0)\|.
             \]
             Hence the ball with radius
             \begin{equation}\label{eq7}
                  \tilde
R^{\eps,x}(\omega_2)=2\int_{-\infty}^0e^{\frac{(\lambda_B-C_1)r}{\eps}}\frac{1}{\eps}(C_1(\|Z^\eps(\theta_r\omega_2)\|+\|x\|)+C_2)dr
             \end{equation}
             and center 0 defines a tempered forward and pullback
absorbing set
             $C^{\eps,x}(\omega_2)$. The temperedness follows by \cref{l2}.

             Consider
             \begin{align*}
                      \|\tilde \phi^{\eps,x}&(t,\omega_2,\tilde Y_{01})
-\tilde
             \phi^{\eps,x}(t,\omega_2,\tilde Y_{02})\| \\
                  & \le \, e^{-\frac{\lambda_B}{\eps} t} \|\tilde
Y_{0,1}-\tilde
             Y_{0,2}\|+
\int_{0}^te^{-\frac{\lambda_B(t-r)}{\eps}}\frac{C_1}{\eps}\|\tilde
             \phi^{\eps,x}(r,\omega_2,\tilde  Y_{01}) -\tilde
             \phi^{\eps,x}(r,\omega_2,\tilde  Y_{02})\|dr.
             \end{align*}
             Then a Gr\"onwall Lemma argument gives
             \[
             \|\tilde \phi^{\eps,x}(1,\omega_2,\tilde Y_{01}) -\tilde
             \phi^{\eps,x}(1,\omega_2,\tilde Y_{02})\|\le
             e^{-\frac{\lambda_B-C_1}{\eps}}\|\tilde  Y_{01}-\tilde Y_{02}\|
             \]
             so that the logarithm of the contraction constant is given by
             $k=\frac{1}{\eps}(-\lambda_B+C_1)$. Note that this constant
$k$ is
             nonrandom, less than $0$ and the estimate is true for any
pair $\tilde
             Y_{0i}\in V,\,i=1,\,2$.
             We have for some measurable selector $y$ with
$y(\omega_2)\in B(0,\tilde
             R^{\eps,x}(\omega_2))$
             \begin{align*}
                  \|\tilde
\phi^{\eps,x}(q,\theta_{-t}\omega_2,y(\theta_{-t}\omega_2))\|\le\, &
             e^{-\frac{\lambda_B}{\eps} q}\|y(\theta_{-t}\omega_2)\|\\
                  &+\int_0^qe^{-\frac{\lambda_B}{\eps} (q-r)}\bigg
             (\frac{C_1}{\eps}\|\tilde
\phi^{\eps,x}(r,\theta_{-t}\omega_2,y(\theta_{-t}\omega_2))\|\\
&+\frac{1}{\eps}(C_1(\|Z^\eps(\theta_{r-t}\omega_2)\|+\|x\|)+C_2)\bigg)dr.
             \end{align*}
             Then by a Gr\"onwall Lemma argument for $t\in [0,1]$
             \begin{align*}
\|\tilde{\phi}^{\eps,x}&(t,\theta_{-t}\omega_2,y(\theta_{-t}\omega_2))\|\\
                  &\,\le \|y(\theta_{-t}\omega_2\|+\int_0^t
e^{-\frac{(\lambda_B-C_1)(t-r)}{\eps}}\frac{1}{\eps}(C_1(\|Z^\eps(\theta_{r-t}\omega_2)\|+\|x\|)+C_2)dr\\
                  &\,\le  \|y(\theta_{-t}\omega_2\|+\int_{-t}^0
             e^{\frac{(\lambda_B-C_1)r}{\eps}
}\frac{1}{\eps}(C_1(\|Z^\eps(\theta_r\omega_2)\|+\|x\|)+C_2)dr\\
                  &\,\le  \|y(\theta_{-t}\omega_2\|+\int_{-1}^0
             e^{\frac{(\lambda_B-C_1)r}{\eps}
}\frac{1}{\eps}(C_1(\|Z^\eps(\theta_r\omega_2)\|+\|x\|)+C_2)dr\\
                  &\,\le  \tilde R^{\eps,x}(\theta_{-t}\omega_2)+\int_{-1}^0
\frac{1}{\eps}(C_1(\|Z^\eps(\theta_r\omega_2)\|+\|x\|)+C_2)dr.
             \end{align*}
             However integrals and suprema w.r.t compact time intervals
of tempered
             random variables are tempered, see  Chueshov and Schmalfuss \cite[Remark 3.1.8, p. 186]{chueshov2020synchronization}
             \end{proof}

             \begin{theorem}
               For fixed $\omega_2,\,\eps>0$ this fixed point depends
Lipschitz
             continuously on $x$ with Lipschitz constant
$\frac{C_1}{\lambda_B-C_1}$.
             \end{theorem}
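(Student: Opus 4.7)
The plan is to reduce the estimate to an $\omega_2$-free scalar Volterra inequality by comparing two pullback trajectories that share the same initial datum and the same noise path. An explicit Gr\"onwall calculation then yields the Lipschitz constant $\frac{C_1}{\lambda_B-C_1}$ uniformly in $\omega_2$, which transfers to the fixed points themselves after passing to the pullback limit.

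Recall from the proof of Theorem \ref{t1i} that $Y_F^\eps(\omega_2,x)=\tilde Y_F^\eps(\omega_2,x)+Z^\eps(\omega_2)$, with $Z^\eps$ independent of $x$, so it is enough to control $\|\tilde Y_F^\eps(\omega_2,x_1)-\tilde Y_F^\eps(\omega_2,x_2)\|$. Pullback attraction \eqref{eq6} applied to the constant selector $y\equiv 0\in C^{\eps,x}(\omega_2)$ gives, on a $\theta$-invariant full-measure set,
\[
\tilde Y_F^\eps(\omega_2,x_i)=\lim_{t\to\infty}\tilde\phi^{\eps,x_i}(t,\theta_{-t}\omega_2,0),\qquad i=1,2.
\]
For fixed $t>0$ and $\tau\in[0,t]$ set $U_i(\tau):=\tilde\phi^{\eps,x_i}(\tau,\theta_{-t}\omega_2,0)$ and $v(\tau):=\|U_1(\tau)-U_2(\tau)\|$. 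Subtracting the two mild equations, which start from the common value $0$ and see the same noise path, and using the global Lipschitz constant $C_1$ of $g$ in both variables together with $\|S_{B/\eps}(\tau-r)\|\le e^{-\lambda_B(\tau-r)/\eps}$, one lands on the $\omega_2$-free scalar inequality
\[
v(\tau)\le \frac{C_1}{\eps}\int_0^\tau e^{-\lambda_B(\tau-r)/\eps}\bigl(\|x_1-x_2\|+v(r)\bigr)\,dr.
\]

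I would then solve this explicitly: the substitution $H(\tau):=e^{\lambda_B\tau/\eps}(\|x_1-x_2\|+v(\tau))$ turns the inequality into a linear ODE which integrates (with integrating factor $e^{-C_1\tau/\eps}$) to
\[
v(\tau)\le \frac{C_1\|x_1-x_2\|}{\lambda_B-C_1}\bigl(1-e^{-(\lambda_B-C_1)\tau/\eps}\bigr),
\]
uniformly in $\omega_2$. Here the hypothesis $\lambda_B>C_1$ is used exactly to keep the denominator positive, and the factor $1/\eps$ in the kernel cancels cleanly so the final constant is $\eps$-free. Setting $\tau=t$ and sending $t\to\infty$, the exponential correction vanishes while $v(t)$ converges to $\|Y_F^\eps(\omega_2,x_1)-Y_F^\eps(\omega_2,x_2)\|$ by the pullback step, delivering the asserted bound on the $\theta$-invariant full-measure set. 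The only real obstacle is the Gr\"onwall step; the payoff of working in pullback form (rather than attempting to iterate a stationary integral representation of $\tilde Y_F^\eps(\cdot,x)$) is that the intermediate bound on $v(\tau)$ is already uniform in $\omega_2$, so no almost-sure analysis is needed when taking the limit.
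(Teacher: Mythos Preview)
Your proof is correct and the core Gr\"onwall step is identical to the paper's. The only structural difference is in how the comparison is set up: the paper uses the exact fixed-point identity $\tilde Y_F^\eps(\omega_2,x_i)=\tilde\phi^{\eps,x_i}(t,\theta_{-t}\omega_2,\tilde Y_F^\eps(\theta_{-t}\omega_2,x_i))$ and then splits the difference as $K_1+K_2$, where $K_1$ (same parameter $x_1$, different initial data $\tilde Y_F^\eps(\theta_{-t}\omega_2,x_i)$) is shown to vanish as $t\to\infty$ via the contraction rate $e^{-n(\lambda_B-C_1)/\eps}$ together with temperedness of the fixed points, and $K_2$ (same initial datum, different parameters) is exactly your Gr\"onwall computation. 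By starting both pullback trajectories from the common point $0$ and invoking the pullback attraction \eqref{eq6}, you eliminate the $K_1$ analysis entirely, which streamlines the argument at the mild cost of appealing to pullback convergence rather than the fixed-point identity directly. Both routes deliver the same $\omega_2$- and $\eps$-uniform constant $\frac{C_1}{\lambda_B-C_1}$. One minor notational point: the ball centred at $0$ that contains your selector $y\equiv 0$ is $\tilde C^{\eps,x}(\omega_2)$ (the absorbing set for $\tilde\phi^{\eps,x}$), not $C^{\eps,x}(\omega_2)=\tilde C^{\eps,x}(\omega_2)+Z^\eps(\omega_2)$.
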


             \begin{proof}
             We deal with the Lipschitz continuity of the fixed points
$\tilde
             Y^{\eps}_F(\omega_2,x)$.  We have for any $t\geq 0$
             \begin{align*}\label{eq13i}
                  \tilde Y^\eps_F(\omega_2,x_1)&-\tilde
Y^\eps_F(\omega_2,x_2) \\
                  &= \tilde \phi^{\eps,x_1}(t,\theta_{-t}\omega_2,\tilde
             Y^\eps_{F}(\theta_{-t}\omega_2,x_1))-\tilde
             \phi^{\eps,x_2}(t,\theta_{-t}\omega_2,\tilde
             Y^\eps_{F}(\theta_{-t}\omega_2,x_2))\\
                  & =  \tilde \phi^{\eps,x_1}(t,\theta_{-t}\omega_2,\tilde
             Y^\eps_{F}(\theta_{-t}\omega_2,x_1))-\tilde
             \phi^{\eps,x_1}(t,\theta_{-t}\omega_2,\tilde
             Y^\eps_{F}(\theta_{-t}\omega_2,x_2))\\
                  &\quad + \tilde
\phi^{\eps,x_1}(t,\theta_{-t}\omega_2,\tilde
             Y^\eps_{F}(\theta_{-t}\omega_2,x_2))-\tilde
             \phi^{\eps,x_2}(t,\theta_{-t}\omega_2,\tilde
             Y^\eps_{F}(\theta_{-t}\omega_2,x_2))\\
                  &=: K_1+K_2.
             \end{align*}
             We set $t=n+t^\prime$, $t^\prime =t^\prime(t)\in[0,1)$.  When
             $\|x_2\|\le \|x_1\|$ then $\tilde Y_F^\eps(\omega_2,x_2)
\in \tilde
             C^{\eps,x_1}(\omega_2)$. Then
             \begin{eqnarray*}
                  \|K_1\|      &=& \|\tilde
             \phi^{\eps,x_1}(t,\theta_{-t}\omega_2,\tilde
             Y^\eps_{F}(\theta_{-t}\omega_2,x_1))-\tilde
             \phi^{\eps,x_1}(t,\theta_{-t}\omega_2,\tilde
             Y^\eps_{F}(\theta_{-t}\omega_2,x_2))\|\cr
                  &\le& \sup_{y_1\not=y_2\in
             C^{\eps,x_1}(\theta_{-1}\omega_2)}\frac{\|\tilde
             \phi^{\eps,x_1}(1,\theta_{-1}\omega_2,y_1)-\tilde
\phi^{\eps,x_1}(1,\theta_{-1}\omega_2,y_2)\|}{\|y_1-y_2\|}\times\cdot\ldots\cdot\times
             \cr
                  &&\times \sup_{y_1\not=y_2\in
             C^{\eps,x_1}(\theta_{-n}\omega_2)}\frac{\|\tilde
             \phi^{\eps,x_1}(1,\theta_{-n}\omega_2,y_1)-\tilde
\phi^{\eps,x_1}(1,\theta_{-n}\omega_2,y_2)\|}{\|y_1-y_2\|} \cr
                  && \times \|\tilde
             Y_F^\eps(\theta_{-n-t^\prime}\omega_2,x_1)-\tilde
             Y_F^\eps(\theta_{-n-t^\prime}\omega_2,x_2)\|\\
             &\le& e^{-n(\lambda_B-C_1)/\eps }
                  \sup_{t^\prime\in [0,1)} \|\tilde
             Y_F^\eps(\theta_{-n-t^\prime}\omega_2,x_1)-\tilde
             Y_F^\eps(\theta_{-n-t^\prime}\omega_2,x_2)\|.
             \end{eqnarray*}
             Here, $\tilde Y_F^\eps(\omega_2,x_1)$, $\tilde
Y_F^\eps(\omega_2,x_2)\in
             C^{\eps,x_1}(\omega_2)$ are tempered, see (\ref{phi}), hence
             \[
             \sup_{s\in[0,1]}\|Y_F^\eps(\theta_{-s}\omega_2,
x_1)\|,\,\sup_{s\in[0,1]}\|Y_F^\eps(\theta_{-s}\omega_2, x_2)\|
             \]
             is tempered, see Chueshov and Schmalfuss \cite[Remark 3.1.8, p. 186]{chueshov2020synchronization}.  Thus the right hand side can be made arbitrarily small when $n$ is sufficiently large. We then
have
             that $K_1$ is zero for $t\to\infty$. We deal with $K_2$.
             \begin{eqnarray*}
                  \|K_2\| &=& \| \tilde
\phi^{\eps,x_1}(t,\theta_{-t}\omega_2,\tilde
             Y^\eps_{F}(\theta_{-t}\omega_2,x_2)) -\tilde
             \phi^{\eps,x_2}(t,\theta_{-t}\omega_2,\tilde
             Y^\eps_{F}(\theta_{-t}\omega_2,x_2))\| \cr
             &    \le & \int_0^t
e^{-\frac{\lambda_B(t-r)}{\eps}}\frac{C_1}{\eps}(\|
             \tilde \phi^{\eps,x_1}(r,\theta_{-t}\omega_2,\tilde
             Y^\eps_{F}(\theta_{-t}\omega_2,x_2)) \cr
                  &&-\tilde \phi^{\eps,x_2}(r,\theta_{-t}\omega_2,\tilde
             Y^\eps_{F}(\theta_{-t}\omega_2,x_2))\|+\|x_1-x_2\|)dr.
             \end{eqnarray*}
             Then by the Gr\"onwall lemma
             \begin{align*}
                  \| \tilde \phi^{\eps,x_1}(t,\theta_{-t}\omega_2,\tilde
             Y^\eps_{F}(\theta_{-t}\omega_2,x_2)) &-\tilde
             \phi^{\eps,x_2}(t,\theta_{-t}\omega_2,\tilde
             Y^\eps_{F}(\theta_{-t}\omega_2,x_2))\|\cr
                  &\le \int_0^t
e^{-\frac{(\lambda_B-C_1)(t-r)}{\eps}}\frac{C_1}{\eps}\|x_1-x_2\|dr\cr
             & \le\frac{C_1}{\lambda_B-C_1}\|x_1-x_2\|
             \end{align*}
             so that the Lipschitz constant is independent of $\eps$ and
$\omega_2$.
             Finally  we have
             \[
             \| \tilde Y^\eps_F(\omega_2,x_1)-\tilde
Y^\eps_F(\omega_2,x_2)\|\le
             \frac{C_1}{\lambda_B-C_1}\|x_1-x_2\|.
             \]
             \end{proof}
             \begin{lemma}
             We have
             \[
Y_F^\eps(\theta_r\omega_2,x)=Y_F^1(\theta_{\frac{r}{\eps}}\omega_2,x)
             \]
             \end{lemma}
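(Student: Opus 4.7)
\emph{Proof plan.} The plan is to derive a pathwise time rescaling connecting the mild solution operator of \eqref{eq2i} at scale $\eps$ to the one at scale $\eps=1$, and then to combine it with the uniqueness of the random fixed point provided by \cref{t1i}.

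\emph{Step 1 (time rescaling of solutions).} Denote by $\phi^{\eps,x}(t,\omega_2,Y_0)$ the mild solution of \eqref{eq2i} at time $t$ starting from $Y_0$, and by $\phi^{1,x}(s,\omega_2,Y_0)$ the analogous quantity at scale $\eps=1$. I first verify that
\[
\phi^{\eps,x}(t,\omega_2,Y_0)=\phi^{1,x}(t/\eps,\omega_2,Y_0),\qquad t\ge 0,\ Y_0\in V.
\]
Writing the mild form of $\phi^{1,x}$ at time $t/\eps$ and substituting $q=p/\eps$, one uses $S_B(t/\eps)=S_{B/\eps}(t)$ together with the pathwise change of variables
\[
\int_0^{t/\eps}S_B(t/\eps-q)\,d\omega_2(q)=\int_0^{t}S_{B/\eps}(t-p)\,d\omega_{2,\eps}(p)
\]
for the Z\"ahle integral; the rescaled quantity then satisfies the mild form of \eqref{eq2i}, and uniqueness of mild solutions concludes. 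This is exactly the substitution already carried out in the proof of \cref{z1-equal}.

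\emph{Step 2 (identification of the random fixed points and conclusion).} Since shifting $\omega_{2,\eps}=\omega_2(\cdot/\eps)$ by time $t$ corresponds to shifting $\omega_2$ by $t/\eps$, the cocycle $\phi^{\eps,x}$ is naturally a cocycle over the MDS $(\Omega_2,(\theta_{t/\eps})_{t\in\RR})$. The random fixed point identity of \cref{t1i} therefore reads
\[
\phi^{\eps,x}\bigl(t,\omega_2,Y_F^\eps(\omega_2,x)\bigr)=Y_F^\eps(\theta_{t/\eps}\omega_2,x),\qquad t\ge 0.
\]
Combining with Step~1 and setting $s=t/\eps$, one obtains
\[
\phi^{1,x}\bigl(s,\omega_2,Y_F^\eps(\omega_2,x)\bigr)=Y_F^\eps(\theta_s\omega_2,x),\qquad s\ge 0,
\]
which is precisely the random fixed point equation for $\phi^{1,x}$ over the standard MDS $(\Omega_2,\theta)$. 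Since $Y_F^\eps(\cdot,x)$ is tempered, the uniqueness statement of \cref{t1i} applied with $\eps=1$ forces $Y_F^\eps(\omega_2,x)=Y_F^1(\omega_2,x)$ on a $\theta$-invariant set of full measure. Using Step~1 once more and the stationarity of $Y_F^1$ under $\phi^{1,x}$,
\[
Y_F^\eps(\theta_r\omega_2,x)=\phi^{\eps,x}\bigl(r,\omega_2,Y_F^\eps(\omega_2,x)\bigr)=\phi^{1,x}\bigl(r/\eps,\omega_2,Y_F^1(\omega_2,x)\bigr)=Y_F^1(\theta_{r/\eps}\omega_2,x),
\]
which is the claim.

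\emph{Main obstacle.} The only non-routine point is the pathwise change of variables for the Z\"ahle integral in Step~1; since it mirrors the manipulation already performed for the Ornstein--Uhlenbeck process in \cref{z1-equal}, no new analysis is required. The remainder is careful tracking of the fact that a shift by $t$ of the compressed path $\omega_{2,\eps}$ corresponds to a shift by $t/\eps$ of $\omega_2$.
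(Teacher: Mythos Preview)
Your argument is correct and rests on the same two ingredients as the paper's proof: the time rescaling $t\mapsto t/\eps$ that takes the $\eps$--equation to the $\eps=1$ equation (the computation behind \cref{z1-equal}), and the uniqueness of the tempered random fixed point from \cref{t1i}.

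The execution, however, is genuinely different. The paper does not work with $\phi^{\eps,x}$ directly but first passes to the conjugated random ODE $\tilde\phi^{\eps,x}$ (obtained by subtracting the stationary O--U process $Z^\eps$), writes the transformed fixed point $\tilde Y_F^\eps(\omega_2,x)$ as the integral $\int_{-\infty}^0 S_{B/\eps}(-r)\tfrac1\eps\,\tilde g_\eps(x,\tilde Y_F^\eps(\theta_r\omega_2,x),\theta_r\omega_2)\,dr$, performs the substitution $r'=r/\eps$ together with \cref{z1-equal} to recognize the integral equation characterizing $\tilde Y_F^1$, and only then invokes uniqueness; the conjugation $Y_F^\eps=\tilde Y_F^\eps+Z^\eps$ is undone at the end. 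Your route avoids the transformed system and the infinite--time integral representation entirely, arguing instead at the level of the cocycle identity; this is cleaner, but it does require you to keep straight that the paper's notation $Y_F^\eps(\theta_r\omega_2,x)$ denotes the stationary process at physical time $r$ (i.e.\ the random variable evaluated at $\theta_{r/\eps}\omega_2$ under your convention), which is exactly the bookkeeping you flag in your ``main obstacle'' paragraph. One small point to tighten: when you invoke uniqueness from \cref{t1i} at $\eps=1$, you should note that $Y_F^\eps(\cdot,x)$ is tempered with respect to the \emph{standard} flow $\theta$ (not just the rescaled one), which follows because the absorbing radius in \eqref{eq7} is tempered under $\theta$ regardless of $\eps$.
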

             \begin{proof}
             We prove the equality  of $\tilde
Y_F^{\eps}(\theta_r\omega_2,x)$, $\tilde
             Y_F^{1}(\theta_\frac{r}{\eps}\omega_2,x)$  for any
$\eps>0$. Then by
             \cref{z1-equal}, $Z(\theta_{1/\eps\cdot}\omega_2)$ and
             $Z^\eps(\theta_{\cdot}\omega_2)$  are equal.
             This causes for $r^\prime=\frac{r}{\eps}$
             \begin{eqnarray*}
                  \tilde Y^\eps_F(\omega_2,x)&=&\int_{-\infty}^0
             S_{\frac{B}{\eps}}(-r)\frac{1}{\eps} \tilde g_\eps(x,\tilde
             Y^\eps_F(\theta_r\omega_2,x),\theta_r\omega)dr \cr
                  &= & \int_{-\infty}^0
             S_{B}(-\frac{r}{\eps})\frac{1}{\eps}g(x,\tilde
Y^\eps_F(\theta_{r}\omega_2,x)+Z^\eps(\theta_r\omega_2))dr\cr
                  &= &\int_{-\infty}^0
S_{B}(-\frac{r}{\eps})\frac{1}{\eps}g(x,\tilde
Y^\eps_F(\theta_{r}\omega_2,x)+Z(\theta_{\frac{r}{\eps} }\omega_2))dr
                  \cr
                  &= &\int_{-\infty}^0 S_{B}(-r^\prime)g(x,\tilde
Y^\eps_F(\theta_{r^\prime\eps}\omega_2,x)+Z(\theta_{r^\prime
             }\omega_2))dr^\prime\cr
                  &= & \int_{-\infty}^0 S_{B}(-r^\prime)\tilde g_1(x,\tilde
             Y^\eps_F(\theta_{\eps r^\prime}\omega_2,x),\theta_{
             r^\prime}\omega_2)dr^\prime.
             \end{eqnarray*}
             On the other hand we have by the uniqueness of the fixed point
             \[
             \tilde Y_F^{1}(\omega_2,x)=\int_{-\infty}^0
S_{B}(-r^\prime)\tilde
             g_1(x,\tilde
Y^1_F(\theta_{r^\prime}\omega_2,x),\theta_{r^\prime}\omega_2)dr^\prime .
             \]
             Note that a random fixed point can be presented by such an
integral over
             an infinite domain. This follows by
             \[
             \tilde Y_F^{1}(\omega_2,x)=S_B(t)\tilde
             Y_F^{1}(\theta_{-t}\omega_2,x)+\int_{-t}^0
S_{B}(-r^\prime)\tilde
             g_1(x,\tilde
Y^1_F(\theta_{r^\prime}\omega_2,x),\theta_{r^\prime}\omega_2)dr^\prime .
             \]
             The temperedness of $\|\tilde Y_F^1(\omega_2,x)\|$ and the
exponential decay of
             $S_B$ ensure that the right hand side converges to the
integral over the
             infinite domain.

             We have
             \[
             \tilde
g_\eps(x,y,\theta_r\omega_2)=g(x,y+Z^\eps(\theta_r\omega_2))
             \]
             and hence
             \[
\phi^{\eps,x}(t,\omega_2,y)=Z^\eps(\theta_t\omega_2)+\tilde
             \phi^{\eps,x}(t,\omega_2,y-Z^\eps(\omega_2)).
             \]
             $\phi^{\eps,x}(t,\omega_2,\cdot)$ is the RDS version of the
solution to
             (\ref{eq2i}). There exists a random fixed point
             \begin{equation}\label{eqh1}
             Y_F^\eps(\omega_2,x)=\tilde Y_F^\eps(\omega_2,x)
+Z^\eps(\omega_2).
             \end{equation}
             The tempered set $\tilde C^{\eps,x}(\omega_2)$ is changed to
             \[
             C^{\eps,x}(\omega_2)=\tilde
C^{\eps,x}(\omega_2)+Z^\eps(\omega_2)
             \]
             which is a ball with center $Z^\eps(\omega_2)$ and radius
$\tilde
             R^{\eps,x}(\omega_2)$.
             \end{proof}

             \begin{lemma}\label{z}
             The mapping $r\to Y_F^\eps(\theta_r\omega_2,x)$ is
             $\gamma$--Hölder continuous for $\gamma<H_2$ and $x\in V$.
             \end{lemma}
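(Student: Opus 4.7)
The plan is to exploit the decomposition (\ref{eqh1}),
\[
Y_F^\eps(\theta_r\omega_2,x) = \tilde Y_F^\eps(\theta_r\omega_2,x) + Z^\eps(\theta_r\omega_2),
\]
and to establish $\gamma$-H\"older regularity in $r$ on every compact interval for each summand separately; the regularity of the sum is then controlled by the rougher of the two.

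For \emph{the O-U part}, Section~\ref{ss31} has already shown that $t\mapsto Z(\theta_t\omega_2)$ has finite $\gamma$-H\"older seminorm on every compact interval for each $\gamma<\gamma^\prime<H_2$: indeed, (\ref{eq11}) decomposes that map into a $\gamma^\prime$-H\"older shift of $\omega_2$ and two further terms smoothed by the analytic semigroup $S_B$. Combining this with Lemma~\ref{z1-equal}, which gives $Z^\eps(\theta_r\omega_2)=Z(\theta_{r/\eps}\omega_2)$, and noting that the linear rescaling $r\mapsto r/\eps$ preserves the H\"older exponent (only the constant degrades like $\eps^{-\gamma}$), we obtain that $r\mapsto Z^\eps(\theta_r\omega_2)$ is $\gamma$-H\"older for every $\gamma<H_2$.

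For \emph{the transformed fixed point} $\tilde Y_F^\eps(\theta_t\omega_2,x)$, which is stationary under a \emph{noise-free} mild equation, I would fix $\delta>0$ and write, for every $t\ge 0$,
\begin{equation*}
\tilde Y_F^\eps(\theta_t\omega_2,x) = S_{B/\eps}(t+\delta)\,\tilde Y_F^\eps(\theta_{-\delta}\omega_2,x) + \frac{1}{\eps}\int_{-\delta}^t S_{B/\eps}(t-r)\,g(x,Y_F^\eps(\theta_r\omega_2,x))\,dr,
\end{equation*}
having used (\ref{gtilde}) to rewrite $\tilde g_\eps(x,\tilde Y_F^\eps(\theta_r\omega_2,x),\theta_r\omega_2)=g(x,Y_F^\eps(\theta_r\omega_2,x))$. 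The first summand is $C^\infty$ in $t\in[0,T]$ because analyticity places $S_{B/\eps}(\delta)\tilde Y_F^\eps(\theta_{-\delta}\omega_2,x)$ in $D((-B)^\alpha)$ for every $\alpha>0$, and $t\mapsto S_{B/\eps}(t)w_0$ is Lipschitz as soon as $w_0\in D(-B)$. For the convolution term, the continuity of $r\mapsto Y_F^\eps(\theta_r\omega_2,x)$ (which follows from the mild form together with continuity of $Z^\eps(\theta_r\omega_2)$) and the linear growth of $g$ from (A2) render the integrand bounded on $[-\delta,T]$, and the standard analytic-semigroup estimates of the type (\ref{semi1})--(\ref{semi3}) then yield $\sigma$-H\"older regularity of the convolution for every $\sigma<1$.

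Summing the two contributions delivers the claimed $\gamma$-H\"older regularity with $\gamma<H_2$, saturated by the O-U term. The main technical point I anticipate is guaranteeing the preliminary continuity of $r\mapsto Y_F^\eps(\theta_r\omega_2,x)$ needed to bound the convolution integrand, but as sketched this reduces to the continuity in $t$ of the mild solution of an analytic-semigroup equation with continuous inhomogeneity, and is therefore standard.
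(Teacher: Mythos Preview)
Your proposal is correct and follows essentially the same approach as the paper: both split via (\ref{eqh1}), use the $\gamma$-H\"older regularity of $r\mapsto Z^\eps(\theta_r\omega_2)$ established in Section~\ref{ss31} (together with Lemma~\ref{z1-equal}) as the limiting factor, and show that $r\mapsto \tilde Y_F^\eps(\theta_r\omega_2,x)$ enjoys strictly better regularity by writing it in mild form and invoking the smoothing of the analytic semigroup $S_{B/\eps}$. Your $\delta$-shift device is exactly the mechanism behind the paper's restriction to intervals $[r'+\delta,r'+T]$; the paper is simply more terse about it.
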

             \begin{proof}
             Let  $r>r^\prime\in \RR$.
             We consider
             \begin{align*}
             \|\tilde Y_F^\eps(\theta_r\omega_2,x)-\tilde
             Y_F^\eps(\theta_{r^\prime}\omega_2,x)\|\le
             &\|S_{\frac{B}{\eps}}(r-r^\prime)-{\rm id})\tilde
Y^\eps_F(\theta_{r^\prime}\omega_2,x)\|\\&+\bigg\|\int_{r^\prime}^r
             S_{\frac{B}{\eps}}(r-s)\frac{1}{\eps} \tilde g_\eps(x,\tilde
             Y^\eps_F(\theta_s\omega_2,x),\theta_s\omega)ds\bigg\|.
             \end{align*}
             Since $s\mapsto  \tilde Y_F^\eps(\theta_s\omega_2,x)$ and
$s\mapsto Z^\eps(\theta_s\omega_2)$ are continuous the Lebesgue integral
can be estimated by $C_{\eps,T}|r-r^\prime|$ and
             by (\ref{semi1}), (\ref{semi2}) the first term on the right
hand side of
             the last inequality causes H{\"o}lder continuity on any
             interval $r\in [r^\prime+\delta, r^\prime+T],\,0<\delta<T$
and $x\in V$.

             Then by the transformation  (\ref{eqh1}) and by Lemma
\ref{l2} again,
             $r\mapsto Y_F^\eps(\theta_r\omega_2,x)$ is $\gamma$-H{\"o}lder
             continuous.
             \end{proof}

             \begin{remark}\label{r1}
             For the random variable $\tilde R^{\eps,x}$ introduced in
\eqref{eq7} we
             can prove that $
             \sup_{r\in[0,T]}\tilde
R^{\eps,x}(\theta_r\omega_2)$
             is defined on $(\theta_t)_{t\in\mathbb{R}}$ invariant set
of full
             measure independent of $x\in V$ and $\eps>0$. This random
variable is
             tempered.
             \end{remark}

             \subsection{An ergodic theorem for separable Hilbert-spaces}
             In this subsection we formulate an ergodic theorem in
             separable Hilbert spaces. We assume that $f$ is bounded.
             Define
             \begin{eqnarray}\label{conave1}
                  \bar f (x)=\mathbb{E}[f(x,Y^1_F(\omega_2,x))]
             \end{eqnarray}
             where this Hilbert-space valued expectation is determined by
             $\mathbb{E}[(f(x,Y^1_F(\omega_2,x),y))]$  for all $y$ in a
dense
             countable set of $V$.

             \begin{lemma}\label{flip}
                  $\bar f$  is Lipschitz continuous.
                  \begin{proof}
                      Because $Y^\eps_F$ depends Lipschitz continuously
on $x$ with
             Lipschitz constant $\frac{C_1}{\lambda_B-C_1}$, for all
$x_{1},x_{2} \in
             V$, we have
                      \begin{eqnarray*}
                          \|\bar f(x_{1})-\bar f(x_{2})\| &\le&
\mathbb{E}[\|f(x_1,Y^1_F(\omega_2,x_1))-f(x_2,Y^1_F(\omega_2,x_2))\|]\cr
                          &\le &
C_{1}\mathbb{E}[(\|x_{1}-x_{2}\|+\|Y^1_F(\omega_2,x_1)-Y^1_F(\omega_2,x_1)\|)]\cr
                          &\le &
(C_1+\frac{C^2_1}{\lambda_B-C_1})\|x_{1}-x_{2}\|=:C^\prime\|x_{1}-x_{2}\|.
                      \end{eqnarray*}
                      Thus, the desired result is obtained.
                  \end{proof}
             \end{lemma}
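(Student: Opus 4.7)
The plan is to exploit two facts that are both already in hand: (i) the global Lipschitz continuity of $f$ in both arguments furnished by assumption (A2), with constant $C_1$, and (ii) the Lipschitz continuity in $x$ of the random fixed point $Y^1_F(\omega_2,x)$ with constant $C_1/(\lambda_B - C_1)$, established in the theorem immediately preceding this lemma. The overall strategy is to pull the norm inside the Hilbert-space valued expectation and then split the integrand in $f$ via an intermediate point so that each half can be controlled by one of the two Lipschitz bounds.

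Concretely, I would proceed as follows. First I would justify that the vector-valued expectation defining $\bar f(x)$ is well defined: since $f$ is bounded by (A4), the random variable $f(x,Y^1_F(\omega_2,x))$ is bounded in $V$, which guarantees Bochner integrability. Second, for fixed $x_1,x_2 \in V$, I would write
\begin{align*}
\|\bar f(x_1)-\bar f(x_2)\| &\le \mathbb{E}\bigl[\|f(x_1,Y^1_F(\omega_2,x_1)) - f(x_2,Y^1_F(\omega_2,x_2))\|\bigr]\\
&\le C_1\,\mathbb{E}\bigl[\|x_1-x_2\| + \|Y^1_F(\omega_2,x_1) - Y^1_F(\omega_2,x_2)\|\bigr],
\end{align*}
using the triangle inequality and the joint Lipschitz continuity of $f$ from (A2). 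Third, I would apply the pathwise estimate $\|Y^1_F(\omega_2,x_1) - Y^1_F(\omega_2,x_2)\| \le \frac{C_1}{\lambda_B - C_1}\|x_1 - x_2\|$, which holds $\omega_2$-almost surely and is independent of $\omega_2$, so taking expectation is trivial. This yields the Lipschitz constant $C' = C_1 + \frac{C_1^2}{\lambda_B - C_1}$.

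There is no genuinely hard step here: the lemma is essentially a direct corollary of the Lipschitz estimates already established. The only thing that requires a little care is keeping the $\omega_2$-dependence clean, namely recognizing that the Lipschitz bound on $Y^1_F$ is a deterministic constant (independent of $\omega_2$ and $\eps$, as shown in the previous theorem), so that taking expectations simply preserves the bound. The hypothesis $\lambda_B > C_1$ from the averaging theorem is what makes $C_1/(\lambda_B - C_1)$ finite and hence ensures the conclusion.
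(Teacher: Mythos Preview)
Your proposal is correct and follows exactly the paper's argument: pull the norm inside the expectation, apply the Lipschitz bound for $f$ from (A2), and then the deterministic Lipschitz bound $C_1/(\lambda_B-C_1)$ for $x\mapsto Y^1_F(\omega_2,x)$ to arrive at the constant $C'=C_1+C_1^2/(\lambda_B-C_1)$. Your added remark on Bochner integrability via (A4) is a harmless extra justification the paper omits.
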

                  \begin{lemma}\label{l3}
                      Let $\nu>0$.  There exists a
$(\theta_t)_{t\in\mathbb{R}}$
             invariant set   in $\Omega_2$ of full measure  we have for
             $\omega_2$ from this set and  for $x\in V$
                      \[
\lim\limits_{T\rightarrow\pm\infty}\frac{1}{|T|}\bigg\|\int_{0}^{T}(-A)^{-\nu}(f(x,Y_{F}^1(\theta_r\omega_2,x))-\bar
             f(x))\,dr\bigg\|=0.
                      \]
                  \end{lemma}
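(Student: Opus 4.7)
The plan is to deduce the limit from Birkhoff's ergodic theorem applied componentwise to the stationary process $r\mapsto f(x,Y_F^1(\theta_r\omega_2,x))$, and then upgrade the resulting weak convergence in $V$ to norm convergence by exploiting the compactness of $(-A)^{-\nu}$. Set $\phi(x,\omega_2):=f(x,Y_F^1(\omega_2,x))-\bar f(x)$; by (A4) and (\ref{conave1}), $\phi(x,\cdot)$ is uniformly bounded in $V$ and has mean zero.

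For each fixed pair $(x,y)\in V\times V$, I apply Birkhoff's ergodic theorem on the ergodic MDS $(\Omega_2,\fF_2,\PP_{H_2},\theta)$ to the bounded scalar stationary process $r\mapsto (\phi(x,\theta_r\omega_2),y)$. This produces a $(\theta_t)_{t\in\RR}$-invariant set $\Omega_{x,y}$ of full measure on which the scalar time average tends to zero as $T\to\pm\infty$. Intersecting over a countable dense set $D\subset V$ and over the basis $(e_{A,i})_{i\in\NN}$ yields a single $(\theta_t)_{t\in\RR}$-invariant full-measure set $\Omega^*\subset\Omega_2$ on which the scalar limit holds for every $x\in D$ and every $i$. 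Using the Lipschitz continuity of $Y_F^1(\omega_2,\cdot)$ with constant $C_1/(\lambda_B-C_1)$ (which is independent of $\omega_2$), the Lipschitz property of $f$ from (A2), and Lemma \ref{flip}, I obtain a uniform-in-$(r,\omega_2)$ Lipschitz bound on $\phi(\cdot,\theta_r\omega_2)$; consequently the averages $u_T(x,\omega_2):=\tfrac{1}{|T|}\int_0^T\phi(x,\theta_r\omega_2)\,dr$ are uniformly Lipschitz in $x$, and the scalar convergence on $\Omega^*$ persists for every $x\in V$ and every basis vector $e_{A,i}$.

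For each $\omega_2\in\Omega^*$ and $x\in V$, the sequence $(u_T(x,\omega_2))_T$ is then uniformly bounded in $V$ by $2\|f\|_\infty$, and its inner products with every $e_{A,i}$ tend to zero; hence $u_T(x,\omega_2)\rightharpoonup 0$ weakly in $V$. Since $-A$ has compact inverse and its eigenvalues $\lambda_{A,i}\to\infty$ (from (H1) via (A1)), the fractional power $(-A)^{-\nu}$ is compact for every $\nu>0$, so it maps weakly convergent sequences to norm convergent ones. This gives $\|(-A)^{-\nu}u_T(x,\omega_2)\|\to 0$, which is the claim. The main obstacle is controlling the exceptional null set uniformly in $x\in V$: Birkhoff alone gives a different full-measure set for each fixed $x$, and it is precisely the uniform-in-$\omega_2$ Lipschitz dependence of $Y_F^1$ on $x$, together with the compactness of $(-A)^{-\nu}$, that allows one to choose a single null set that works for all $x$ simultaneously.
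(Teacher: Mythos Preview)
Your argument is correct and follows essentially the same route as the paper. Both proofs fix a countable dense set $D\subset V$, obtain the limit for each $x\in D$ on a single $(\theta_t)$-invariant full-measure set, and then extend to all $x\in V$ via the uniform-in-$\omega_2$ Lipschitz dependence of $Y_F^1(\omega_2,\cdot)$ together with Lemma~\ref{flip}. The only difference is cosmetic: for the ``fixed $x$'' step the paper invokes a Hilbert-space ergodic theorem from \cite{chueshov2005averaging} as a black box, whereas you unpack that step explicitly by applying scalar Birkhoff componentwise, deducing weak convergence from boundedness, and then upgrading to norm convergence via the compactness of $(-A)^{-\nu}$. This is exactly the mechanism behind the cited result, so your proof is a more self-contained version of the same argument.
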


                  \begin{proof}
                      Consider the operator $(-A)^{-\nu}$ which is a
compact operator
             on $V$.
                      Let $\Omega_{2,x}$ be
             the $(\theta_t)_{t\in\mathbb{R}}$-invariant set so that  for
             $\omega_2\in \Omega_{2,x}$
                      \[
\lim\limits_{T\rightarrow\pm\infty}\frac{1}{|T|}\bigg\|\int_{0}^{T}(-A)^{-\nu}(f(x,Y_{F}^1(\theta_r\omega_2,x))-\bar
             f(x))\,dr\bigg\|=0
                      \]
                      which follows for every $x\in V$ by Chueshov at al.
             \cite[Section 2]{chueshov2005averaging} and for the
invariance
             assertion  see  Arnold \cite[Appendix 1]{arnold1998random}.
                      For a dense and countable set $D\subset V$ the set
                      \[
                      \bigcap_{x\in D}\Omega_{2,x}=:\tilde{\Omega}_2
                      \]
                      is $(\theta_t)_{t\in\mathbb{R}}$ invariant and has
full
             measure. Let $x\not\in D$ and $(x_n)_{n\in\mathbb{N}}$
be a sequence
             in $D$ so that
                      \[
                      \lim_{n\to\infty}\|x-x_n\|=0.
                      \]
                      By the Lipschitz continuity and the uniform
Lipschitz constant
             of  $x\mapsto Y_F^1(\omega_2,x)$ with respect to $\omega_2$ and
             $\eps>0$ we have by \cref{flip} and Theorem \ref{t1i} for
any $\zeta>0$
             an $\tilde n\in\NN$ so that
                      \begin{align*}
                      \|(-A)^{-\nu}&((f(x_{\tilde
n},Y_{F}^1(\theta_r\omega_2,x_{\tilde n}))-\bar f(x_{\tilde
n}))-(f(x_{\tilde
             n},Y_{F}^1(\theta_r\omega_2,x))-\bar f(x)))\|\\
                      \le &2C^\prime\|(-A)^{-\nu}\| \|x-x_{\tilde
n}\|\le \frac{\zeta}{2}.
                      \end{align*}
                      On the other hand we choose an
$T_0=T_0(\omega_2,\zeta)>0$ so
             that for all $|T|>T_0$ we have that
                      \[
\lim\limits_{T\rightarrow\pm\infty}\frac{1}{|T|}\bigg\|\int_{0}^{T}(-A)^{-\nu}(f(x_{\tilde
             n},Y_{F}^1(\theta_r\omega_2,x_{\tilde n}))-\bar f(x_{\tilde
             n}))\,dr\bigg\|\le \frac{\zeta}{2}
                      \]
                      on $\tilde{\Omega}_2$ \cite{chueshov2005averaging}.
                      Hence
                      \begin{align*}
\frac{1}{|T|}\bigg\|\int_{0}^{T}&(-A)^{-\nu}(f(x,Y_{F}^1(\theta_r\omega_2,x))-\bar
             f(x))\,dr\bigg\|\\
             \le &
\frac{1}{|T|}\bigg\|\int_{0}^{T}(-A)^{-\nu}(f(x_,Y_{F}^1(\theta_r\omega_2,x_{\tilde
             n}))-\bar f(x_{\tilde n}))\,dr\bigg\|\\
             &+2\|(-A)^{-\nu}\|C^\prime_1 \|x-x_{\tilde n}\|\le {\zeta}
                      \end{align*}
                      for $|T|>T_0$ and for $\omega_2\in \tilde{\Omega}_2$.
                  \end{proof}

             \subsection{Proof of \cref{mainthm}}
             Following the discretization techniques inspired by
Khasminskii in
             \cite{khasminskii1968on},  we divide $[0,T]$ into intervals of
size $\delta$,
             where $\delta \in(0,1)$ is a fixed number.  Then,
             we construct an auxiliary process $\hat{Y}^{\eps}$ with
initial value
             $\hat Y^{\eps}(0)=Y^{\eps}(0)=Y_0,$ and for
             $t\in[k\delta,\min\{(k+1)\delta,T\}),$
             \begin{eqnarray}
                  \label{yhat}
\hat{Y}^{\eps}(t)&=&S_{\frac{B}{\eps}}(t-k\delta)\hat{Y}^{\eps}(k\delta)+\frac{1}{\eps}\int_{k\delta}^tS_{\frac{B}{\eps}}(t-s)g(X^{\eps}(k\delta),\hat{Y}^{\eps}(s))\,ds\cr
&&+\int_{k\delta}^tS_{\frac{B}{\eps}}(t-s)\,d\omega_{2,\eps}(s)
             \end{eqnarray}
             i.e.
             \begin{eqnarray}
                  \label{yhat0}
\hat{Y}^{\eps}(t)&=&S_{\frac{B}{\eps}}(t)Y_{0}+\frac{1}{\eps}\int_0^tS_{\frac{B}{\eps}}(t-s)g(X^{\eps}(s_{\delta}),\hat{Y}^{\eps}(s))\,ds\cr
&&+\int_0^tS_{\frac{B}{\eps}}(t-s)\,d\omega_{2,\eps}(s)
             \end{eqnarray}
             where $s_{\delta}=\lfloor s / \delta\rfloor \delta$ is the
nearest
             breakpoint preceding $s$.
             Also, we define the process $\hat{X}^{\eps}$, by
             \begin{eqnarray}
                  \label{xhat}
\hat{X}^{\eps}(t)&=&S_{A}(t)X_{0}+\int_{0}^{t}S_{A}(t-r)f(X^{\eps}(r_{\delta}),
                  \hat{Y}^{\eps}(r))\,ds\cr
&&+\int_{0}^{t}S_{A}(t-r)h(X^{\eps}(r))\,d\omega_1(r).
             \end{eqnarray}
                  \begin{lemma}\label{xbound}
                  Assume  {\rm (A1)-(A4)}.  Then, for all $T>0$, we
have for large $\rho$
             have
                  \[\|X^\eps\|_{\gamma,\rho,\sim}\le C(\|X_0\|+1)
                  \]
                  where $C>0$ is a constant which is independent of $\eps$.
             \end{lemma}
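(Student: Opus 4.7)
The plan is to apply the contraction-style estimate from Lemma \ref{tt} to the \emph{slow component alone}. The slow equation \eqref{slowpathint} has the same mild structure as the first component of $\tT$, except that the argument of $f$ now also involves the fast variable $Y^\eps$. The crucial observation is assumption (A4): since $f$ is bounded by some constant $c_f$, the map $r\mapsto f(X^\eps(r),Y^\eps(r))$ may be regarded as a uniformly bounded $V$-valued forcing term, so that its contribution to the $\|\cdot\|_{\gamma,\rho,\sim}$-norm is controlled by a constant depending only on $T$ and $c_f$, with no dependence on $\eps$ or $Y^\eps$.

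Concretely, I would split $X^\eps$ into three pieces as in the definition of $\tT$ and estimate each in $\|\cdot\|_{\gamma,\rho,\sim}$: first, the homogeneous term $S_A(t)X_0$ is bounded in this norm by $c_T\|X_0\|$ using \eqref{semi1}--\eqref{semi3}. Second, for the drift $\int_0^t S_A(t-r)f(X^\eps(r),Y^\eps(r))\,dr$, boundedness of $f$ reduces the estimate to the one carried out in the proof of Lemma \ref{tt} for a bounded drift, yielding a bound of the form $c_T c_f$ in $\|\cdot\|_{\gamma,\rho,\sim}$ that is independent of $\eps$. Third, for the stochastic integral $\int_0^t S_A(t-r)h(X^\eps(r))\,d\omega_1(r)$, I would reuse verbatim the argument from Lemma \ref{tt} (and from \cite{chen2013pathwise}) based on the Weyl fractional derivative representation \eqref{z-integral}; this gives a bound of the form $C(\rho,\omega_1,T)(1+\|X^\eps\|_{\gamma,\rho,\sim})$ with $C(\rho,\omega_1,T)\to 0$ as $\rho\to\infty$. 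Note that none of $S_A$, $h$, or $\omega_1$ carry any $\eps$-dependence, so the constants in this last bound are $\eps$-free as well.

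Combining these, I arrive at
\begin{equation*}
\|X^\eps\|_{\gamma,\rho,\sim}\ \le\ c_T(\|X_0\|+c_f)+C(\rho,\omega_1,T)\bigl(1+\|X^\eps\|_{\gamma,\rho,\sim}\bigr).
\end{equation*}
Choosing $\rho=\rho(\omega_1,T)$ large enough that $C(\rho,\omega_1,T)\le 1/2$ and absorbing the $\|X^\eps\|_{\gamma,\rho,\sim}$ term on the right-hand side then yields
\begin{equation*}
\|X^\eps\|_{\gamma,\rho,\sim}\ \le\ C\bigl(\|X_0\|+1\bigr)
\end{equation*}
with $C=C(T,\omega_1,c_f)$ independent of $\eps$.

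The only subtlety (which I expect to be the main point to watch) is verifying that the $Y^\eps$-dependence of the drift really drops out of the estimate. This is a direct consequence of (A4); without boundedness of $f$, one would have to control $\|Y^\eps\|_{\gamma,\rho,\sim}$ uniformly in $\eps$, which is delicate because the fast component is driven by the time-scaled noise $\omega_{2,\eps}$ whose norm blows up as $\eps\to 0$ (cf.\ Lemma \ref{l2}(2)). With (A4) in force, this issue is entirely bypassed and the argument reduces to the already-established pathwise fixed-point machinery.
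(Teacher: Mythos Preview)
Your proposal is correct and follows essentially the same route as the paper: use the Lemma~\ref{tt}-type estimates on the slow equation alone, observe that by (A4) the bounded drift $f(X^\eps,Y^\eps)$ contributes a constant independent of $Y^\eps$ (and hence of $\eps$), and then choose $\rho$ large to absorb the stochastic-integral term $C(\rho,\omega_1,T)(1+\|X^\eps\|_{\gamma,\rho,\sim})$. The paper's proof is even terser---it cites \cite[Lemma~9]{chen2013pathwise} and writes the single inequality $\|X^\eps\|_{\gamma,\rho,\sim}\le c_T\|X_0\|+c_TK(\rho)\ltn\omega_1\rtn_\beta(1+\|X^\eps\|_{\gamma,\rho,\sim})$, then takes $\rho$ large---and the subsequent remark makes exactly your point that boundedness of $f$ is what removes any $Y^\eps$-dependence from the estimate.
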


             \begin{proof}
                  The proof of this result can be done by a slight
generalization of
             \cite[Lemma 9]{chen2013pathwise}. It is easy to see
                  \begin{eqnarray*}
                      \|X^\eps\|_{\gamma,\rho,\sim}\le
c_{T}\|X_0\|+c_{T}K(\rho)\ltn
             \omega_1 \rtn_{\beta}(1+\|\ X^\eps\|_{\gamma,\rho,\sim})
                  \end{eqnarray*}
                  then, taking $\rho$ big enough such that
$c_{T}K(\rho)\ltn \omega_1
             \rtn_{\beta}<\frac12,$ we have
                  \begin{eqnarray*}
                      \|X^\eps\|_{\gamma,\rho,\sim}\le 2 c_{T}\|X_0\|+1.
                  \end{eqnarray*}
             Here $K(\rho)$ is a positive function tending to zero for
$\rho\to\infty$.
             \end{proof}

             \begin{remark}
             Due to the boundedness of $f$, $Y^\eps$ does not have any
effect on the estimate
             for $\|X^\eps\|_{\gamma,\rho,\sim}$.
             \end{remark}

                  \begin{remark}\label{xbarb}
                      Assume  {\rm (A1)-(A4)}.  Then, for all
             $T>0$, we have
                      \[\|\hat X^\eps\|_{\gamma,\rho,\sim}+\|\bar
             X\|_{\gamma,\rho,\sim}\le
             C(\|X_0\|+1)
                      \]
                      where $C>0$ is a constant which is independent of
$\eps$.    We
             obtain by the same method an similar estimate for $\|\hat
             X^\eps\|_{\gamma,\rho,\sim}$.
             \end{remark}

             \begin{lemma}\label{ybound}
                  For any solution $Y^\eps$ of {\rm(\ref{fastpath})} and
any solution
             $\hat Y^\eps$ of {\rm (\ref{yhat0})}, $t\in[0,T]$, we have
                  \begin{eqnarray*}
                      \|Y^\eps\|_{\infty}+\|\hat Y^\eps\|_{\infty}
                      \le C\big(1+\|X_0\|+\|Y_0\|+o(\eps^{-1})
                      \big)
                  \end{eqnarray*}
             where $C$ is a constant which is independent of $\eps$.
             \end{lemma}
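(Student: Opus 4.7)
My plan is to derive the bound by removing the pathwise stochastic integral via the conjugation $\tilde Y^\eps(t) := Y^\eps(t) - Z^\eps(\theta_t\omega_2)$ already used in the proof of Theorem \ref{t1i}. Since $Z^\eps(\theta_\cdot\omega_2)$ is a stationary mild solution of (\ref{eqa-z1}), the mild form (\ref{fastpathint}) of $Y^\eps$ combined with this substitution yields the purely deterministic integral equation
\[
\tilde Y^\eps(t) = S_{B/\eps}(t)\bigl(Y_0 - Z^\eps(\omega_2)\bigr) + \frac{1}{\eps}\int_0^t S_{B/\eps}(t-r)\,g\!\left(X^\eps(r),\,\tilde Y^\eps(r)+Z^\eps(\theta_r\omega_2)\right)\,dr,
\]
which reduces the problem to a singular Gr\"onwall estimate.

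Next I would use (H1) in the form $\|S_{B/\eps}(t)\|\le e^{-\lambda_B t/\eps}$ together with the linear growth in (A2), $\|g(x,y)\|\le C_1(\|x\|+\|y\|)+C_2$, to obtain for $u(t):=\|\tilde Y^\eps(t)\|$ the scalar inequality
\[
u(t) \le c_0\,e^{-\lambda_B t/\eps} + \frac{C_1}{\eps}\int_0^t e^{-\lambda_B(t-r)/\eps}\,u(r)\,dr + \frac{\bar M}{\lambda_B},
\]
with $c_0 := \|Y_0\|+\|Z^\eps(\omega_2)\|$ and $\bar M := C_1\bigl(\|X^\eps\|_\infty+\sup_{r\in[0,T]}\|Z^\eps(\theta_r\omega_2)\|\bigr)+C_2$. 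Introducing $w(t):=\int_0^t e^{-\lambda_B(t-r)/\eps}u(r)\,dr$ converts this into the linear differential inequality $w'(t)+(\lambda_B-C_1)\eps^{-1} w(t) \le c_0 e^{-\lambda_B t/\eps}+\bar M/\lambda_B$, whose explicit solution uses the spectral-gap hypothesis $\lambda_B>C_1$ of Theorem \ref{mainthm} to give the $\eps$-uniform bound
\[
\sup_{t\in[0,T]} u(t) \le 2c_0 + \frac{\bar M}{\lambda_B-C_1}.
\]

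To finish, I would invoke Lemma \ref{xbound} (together with the obvious $\|X^\eps(s)\| \le e^{\rho T}\|X^\eps\|_{\gamma,\rho,\sim}$) to get $\|X^\eps\|_\infty \le C(1+\|X_0\|)$, and Lemma \ref{l2}(2) to replace $\sup_{r\in[0,T]}\|Z^\eps(\theta_r\omega_2)\|$ by $o(\eps^{-1})$. Undoing the transformation through $\|Y^\eps(t)\|\le \|\tilde Y^\eps(t)\|+\|Z^\eps(\theta_t\omega_2)\|$ then delivers the claimed bound for $\|Y^\eps\|_\infty$. The estimate for $\hat Y^\eps$ follows by the identical argument applied to (\ref{yhat0}): the only modification is that $g$ is now evaluated at $X^\eps(s_\delta)$ instead of $X^\eps(s)$, but since $\|X^\eps(s_\delta)\|\le \|X^\eps\|_\infty$ the Gr\"onwall step is unchanged. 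The delicate point is tracking the $\eps$-dependence through that step, since the singular prefactor $C_1/\eps$ and the singular kernel $e^{-\lambda_B(t-r)/\eps}$ have to cancel exactly; it is precisely the spectral-gap condition $\lambda_B>C_1$ that makes this cancellation possible and keeps the resulting constant independent of $\eps$.
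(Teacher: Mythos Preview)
Your proposal is correct and follows essentially the same route as the paper: both remove the stochastic convolution via the stationary Ornstein--Uhlenbeck process $Z^\eps(\theta_\cdot\omega_2)$, use the linear growth of $g$ together with $\|S_{B/\eps}(t)\|\le e^{-\lambda_B t/\eps}$, exploit the spectral gap $\lambda_B>C_1$ to obtain an $\eps$-uniform bound, and then invoke Lemma~\ref{xbound} and Lemma~\ref{l2}(2). The only cosmetic difference is in how the self-interaction term is handled: the paper simply pulls $\sup_{r\in[0,T]}\|Y^\eps(r)\|$ out of the integral, bounds $\frac{1}{\eps}\int_0^t e^{-\lambda_B(t-r)/\eps}\,dr\le \lambda_B^{-1}$, and absorbs via $C_1/\lambda_B<1$, whereas you run the slightly more elaborate differential-inequality argument for $w(t)$; both give the same conclusion.
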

             \begin{proof}
                  For $t\in[0,T]$, from (\ref{fastpath}), one has
                  \begin{align*}
                      Y^\eps(t)=&\,S_{\frac{B}{\eps}}(t)
Y_0+\frac{1}{\eps}\int_{0}^{t}S_{\frac{B}{\eps}}(t-r)g(
             X^\eps(r),Y^\eps(r))\,
dr+\int_{0}^{t}S_{\frac{B}{\eps}}(t-r)\,d\omega_{2,\eps}(r)\cr=&\,S_{\frac{B}{\eps}}(t)
(Y_0-Z^\eps(\omega_2))+Z^\eps(\theta_t\omega_2)+\frac{1}{\eps}\int_{0}^{t}S_{\frac{B}{\eps}}(t-r)g(
             X^\eps(r),Y^\eps(r))\,dr.
                  \end{align*}

                  Then, we have
                  \begin{eqnarray*} \|Y^\eps(t)\|&\le
&\|S_{\frac{B}{\eps}}(t)\|\|Y_0-Z^\eps(\omega_2)\|+\|Z^\eps(\theta_t\omega_2)\|\cr
&&+\bigg\|\frac{1}{\eps}\int_{0}^tS_{\frac{B}{\eps}}(t-r)g(X^{\eps}(r),Y^{\eps}(r))\,dr\bigg\|\cr
                      &\le & e^{\frac{-\lambda_B t
}{\eps}}\|Y_0-Z^\eps(\omega_2)\|+\|Z^\eps(\theta_t\omega_2)\|\cr
                      &&+
\frac{1}{\eps}\int_{0}^te^{-\frac{\lambda_B}{\eps}(t-r)}\big(\|C_2+C_1(\|X^{\eps}(r)\|+\|Y^{\eps}(r)\|)\big)\,dr.
                  \end{eqnarray*}

                  By \cref{xbound} and (A2), it is easy to know
                  \begin{eqnarray*}
                      \sup_{t\in[0,T]}\|Y^\eps(t)\|
                      &\le
&\|Y_0\|+2\sup_{t\in[0,T]}\|Z^\eps(\theta_t\omega_2)\|\cr&&+\sup_{t\in[0,T]}
\frac{1}{\eps}\int_{0}^te^{-\frac{\lambda_B}{\epsilon}(t-r)}(C_2+C_1\|X^{\eps}(r)\|)\,ds\cr
&&+\sup_{t\in[0,T]}\frac{C_1}{\eps}\int_{0}^te^{-\frac{\lambda_B}{\epsilon}(t-r)}\|Y^{\eps}(r)\|\,ds\cr
                      &\le &
C\big(1+\|X_0\|+\|Y_0\|+\sup_{t\in[0,T]}\|Z^\eps(\theta_t\omega_2)\|)+\frac{C_1}{\lambda_B}\sup_{t\in[0,T]}\|Y^{\eps}(r)\|.
                  \end{eqnarray*}
                  Then, by $\lambda_B>C_1$ and \cref{l2}, we have
                  \begin{eqnarray*}
                      \sup_{t\in[0,T]}\|Y^\eps(t)\|
                      &\le&
C\big(1+\|X_0\|+\|Y_0\|+\sup_{t\in[0,T]}\|Z^\eps(\theta_t\omega_2)\|
                      \big)\cr
                      &\le& C\big(1+\|X_0\|+\|Y_0\|+o(\eps^{-1})
                      \big).
                  \end{eqnarray*}
                  Indeed we have  for $\eps\to 0$
                      \[
\sup_{t\in[0,T]}\|Z^\eps(\theta_t\omega_2)\|=\sup_{t\in[0,T]}\|Z(\theta_\frac{t}{\eps}\omega_2)\|=o(\eps^{-1})
                      \]
                      by \cref{l2}. The estimate for  $\|\hat
Y^\eps\|_{\infty}$ can
             be obtained in a similar way.
             \end{proof}
             \begin{lemma}\label{y1-y2}
                      For the stationary solution and any solution of {\rm
             (\ref{yhat})}, for $t\in[k\delta,\min\{(k+1)\delta,T\}),$
we have
                      \begin{eqnarray*}
                          \int_{k\delta}^{(k+1)\delta}\|\hat
Y^\eps(t)-Y^\eps_F(\theta_t\omega_2,X^\eps(k\delta))\|\,dt
                          \le  C\big(1+\|X_0\|+\|Y_0\|+o(\eps^{-1})
                          \big)\epsilon
                      \end{eqnarray*}
                      where $C$ is a constant   independent of $\eps$ and
             $\delta$.
             \end{lemma}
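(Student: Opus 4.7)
The plan is to exploit the fact that on the interval $[k\delta,(k+1)\delta)$ both $\hat Y^\eps$ and the stationary process $t\mapsto Y^\eps_F(\theta_t\omega_2,X^\eps(k\delta))$ are mild solutions of the \emph{same} frozen-slow equation
\begin{equation*}
dY=\tfrac{1}{\eps}BY\,dt+\tfrac{1}{\eps}g(X^\eps(k\delta),Y)\,dt+d\omega_{2,\eps},
\end{equation*}
driven by the identical FBM path. Subtracting the two mild representations, the $S_{B/\eps}$-convolution against $d\omega_{2,\eps}$ cancels, so the difference $e(t):=\hat Y^\eps(t)-Y^\eps_F(\theta_t\omega_2,X^\eps(k\delta))$ satisfies the \emph{deterministic} integral equation
\begin{equation*}
e(t)=S_{\frac{B}{\eps}}(t-k\delta)e(k\delta)+\frac{1}{\eps}\int_{k\delta}^{t} S_{\frac{B}{\eps}}(t-s)\bigl[g(X^\eps(k\delta),\hat Y^\eps(s))-g(X^\eps(k\delta),Y^\eps_F(\theta_s\omega_2,X^\eps(k\delta)))\bigr]\,ds.
\end{equation*}

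Using the Lipschitz bound on $g$ from (A2) together with $\|S_{B/\eps}(t)\|\le e^{-\lambda_B t/\eps}$, a Grönwall argument gives the pointwise contraction $\|e(t)\|\le e^{-(\lambda_B-C_1)(t-k\delta)/\eps}\|e(k\delta)\|$; the exponent is negative by the standing hypothesis $\lambda_B>C_1$ of \cref{mainthm}. Integrating over $[k\delta,(k+1)\delta]$ immediately yields
\begin{equation*}
\int_{k\delta}^{(k+1)\delta}\|e(t)\|\,dt\;\le\; \frac{\eps}{\lambda_B-C_1}\bigl(1-e^{-(\lambda_B-C_1)\delta/\eps}\bigr)\|e(k\delta)\|\;\le\;\frac{\eps}{\lambda_B-C_1}\|e(k\delta)\|,
\end{equation*}
so it remains only to verify that $\|e(k\delta)\|\le C\bigl(1+\|X_0\|+\|Y_0\|+o(\eps^{-1})\bigr)$ uniformly in $k$.

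For the initial gap I would use the triangle inequality. The auxiliary term $\|\hat Y^\eps(k\delta)\|$ is already controlled by \cref{ybound}. For the stationary term I would invoke the decomposition $Y^\eps_F=\tilde Y^\eps_F+Z^\eps$ from \eqref{eqh1}, the inclusion $\tilde Y^\eps_F(\omega_2,x)\in B(0,\tilde R^{\eps,x}(\omega_2))$ with radius \eqref{eq7}, the Lipschitz-in-$x$ estimate with constant $C_1/(\lambda_B-C_1)$ proved just before, and the bound $\|X^\eps\|_{\gamma,\rho,\sim}\le C(1+\|X_0\|)$ from \cref{xbound}. The change of variables $r=\eps r'$ inside \eqref{eq7} combined with \cref{z1-equal} rewrites $\tilde R^{\eps,X^\eps(k\delta)}(\theta_{k\delta}\omega_2)$ as an integral of tempered quantities in which all prefactors $\eps^{-1}$ are reabsorbed, while the remaining contribution $\|Z^\eps(\theta_{k\delta}\omega_2)\|$ is of size $o(\eps^{-1})$ by \cref{l2}(2).

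The delicate step is the last one: propagating the $o(\eps^{-1})$ rate (rather than $O(\eps^{-1})$) uniformly over all breakpoints $k\in\{0,1,\dots,\lfloor T/\delta\rfloor\}$. This needs the sublinear growth of $Z$ along the orbit from \cref{l2} together with the conjugacy of \cref{z1-equal}, so that the $\eps^{-1}$ factors coming from the definition of $\tilde R^{\eps,x}$ cancel after change of variables, leaving only the tempered $Z^\eps$ contribution — which is precisely what produces the $o(\eps^{-1})\cdot\eps=o(1)$ scaling needed by the averaging argument in \cref{mainthm}.
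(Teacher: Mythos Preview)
Your proposal is correct and follows essentially the same approach as the paper: subtract the two mild solutions of the common frozen-slow equation, apply Gr\"onwall to get the exponential contraction $\|e(t)\|\le e^{-(\lambda_B-C_1)(t-k\delta)/\eps}\|e(k\delta)\|$, integrate to pick up the factor $\eps/(\lambda_B-C_1)$, and then bound $\|e(k\delta)\|$ by the triangle inequality, the Lipschitz-in-$x$ property of $Y^\eps_F$, and the a~priori bounds of Lemmas~\ref{xbound}, \ref{ybound}, \ref{l2}. The only cosmetic difference is that the paper pivots on the reference point $x=0$ (writing $\|Y^\eps_F(\theta_{k\delta}\omega_2,X^\eps(k\delta))\|\le \frac{C_1}{\lambda_B-C_1}\|X^\eps(k\delta)\|+\tilde R^{\eps,0}(\theta_{k\delta}\omega_2)+\|Z^\eps(\theta_{k\delta}\omega_2)\|$) rather than carrying $\tilde R^{\eps,X^\eps(k\delta)}$ directly, and is terser about the uniform $o(\eps^{-1})$ control that you correctly flag as the only point requiring care.
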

             \begin{proof}
                  By the Gr\"onwall lemma argument, we have
                  \begin{eqnarray*}
                      \|\hat
Y^\eps(t)-Y^\eps_F(\theta_t\omega_2,X^\eps(k\delta))\|
                      \le  e^{-\frac{\lambda_B-C_1}{\eps} (t-k\delta)
}\|\hat
             Y^\eps(k\delta)-Y^\eps_F(
             \theta_{k\delta}\omega_2,X^\eps(k\delta))\|
                  \end{eqnarray*}

                  Integrate above inequality from $k\delta$ to
$(k+1)\delta$, due to
             $\lambda_B>C_1$, we have
                  \begin{align*}
                      \int_{k\delta}^{(k+1)\delta}&\|\hat
Y^\eps(t)-Y^\eps_F(\theta_t\omega_2,X^\eps(k\delta))\|dt\cr
                      \le& \, C \|\hat
Y^\eps(k\delta)-Y^\eps_F(\theta_{k\delta}\omega_2,X^\eps(k\delta))\|
\frac{\epsilon}{\lambda_B-C_1}(1-e^{\frac{-(\lambda_B-C_1)\delta}{\eps}})\\
                      \le & \,C (\|\hat
Y^\eps(k\delta)\|+\frac{C_1}{\lambda_B-C_1}\|X^\eps(k\delta)\|+\|Y_F^\eps
(\theta_{k\delta}\omega_2,0)\|)\frac{\epsilon}{\lambda_B-C_1}\\
                      \le& \,C \sup_{r\in[0,T]}(\|
X^\eps(r)\|+\|Y^\eps(r)\|+R^{\eps,0}(\theta_r\omega_2)+\|Z^\eps(\theta_r\omega_2)\|)\frac{\epsilon}{\lambda_B-C_1}.
                  \end{align*}

                  Thus, by Lemmas \ref{l2},  \ref{xbound} and
\ref{ybound}, the
             desired result is obtained.
             \end{proof}
                    \begin{lemma}\label{y-yhat}
         For the solution $\hat Y^\eps$ of {\rm (\ref{yhat})} and the
solution $Y^\eps$ of {\rm (\ref{fastpath})},
$s\in[k\delta,\min\{(k+1)\delta,T\}),s<t \leq T, \rho>1, k\geq 1,$ $\eps$ small enough, we have
 \begin{eqnarray*}
                       e^{-\rho t}   \int_{k\delta}^{(k+1)\delta}\|Y^\eps(s)-\hat Y^\eps(s)\|\,ds
            \leq   C \delta^{1+\gamma}(1+(k\delta)^{-\gamma})
         \end{eqnarray*}
             where $C$ is a constant which is independent of $\eps$ and
$\delta$.
     \end{lemma}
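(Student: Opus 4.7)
The approach is to write both $Y^\eps$ and $\hat Y^\eps$ as mild forms starting from $t=0$, subtract, and carefully estimate the resulting integral. Since $Y^\eps$ and $\hat Y^\eps$ share the initial condition $Y_0$ and the same stochastic convolution against $\omega_{2,\eps}$, subtracting (\ref{fastpathint}) from (\ref{yhat0}) gives
\begin{equation*}
Y^\eps(s)-\hat Y^\eps(s)=\frac{1}{\eps}\int_0^s S_{\frac{B}{\eps}}(s-r)\bigl[g(X^\eps(r),Y^\eps(r))-g(X^\eps(r_\delta),\hat Y^\eps(r))\bigr]\,dr.
\end{equation*}
Using the contraction $\|S_{\frac{B}{\eps}}(t)\|\le e^{-\lambda_B t/\eps}$ from (A1) together with the Lipschitz bound on $g$ from (A2), I would first obtain
\begin{equation*}
\|Y^\eps(s)-\hat Y^\eps(s)\|\le\frac{C_1}{\eps}\int_0^s e^{-\lambda_B(s-r)/\eps}\bigl(\|X^\eps(r)-X^\eps(r_\delta)\|+\|Y^\eps(r)-\hat Y^\eps(r)\|\bigr)\,dr,
\end{equation*}
then multiply by $e^{\lambda_B s/\eps}$ and invoke the Gr\"onwall inequality to absorb the self-referential term. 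Since $\lambda_B>C_1$ by hypothesis, this yields the cleaner estimate
\begin{equation*}
\|Y^\eps(s)-\hat Y^\eps(s)\|\le\frac{C_1}{\eps}\int_0^s e^{-(\lambda_B-C_1)(s-r)/\eps}\|X^\eps(r)-X^\eps(r_\delta)\|\,dr.
\end{equation*}

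Next I would inject the $\sim$-H\"older regularity of $X^\eps$. Lemma \ref{xbound} gives $\|X^\eps\|_{\gamma,\rho,\sim}\le C(1+\|X_0\|)$, so the norm definition yields $\|X^\eps(r)-X^\eps(r_\delta)\|\le (r_\delta)^{-\gamma}(r-r_\delta)^\gamma e^{\rho r}\|X^\eps\|_{\gamma,\rho,\sim}$ whenever $r\ge\delta$ (so that $r_\delta>0$), while for $r\in[0,\delta)$ the degenerate weight forces the cruder bound $\|X^\eps(r)-X_0\|\le 2e^{\rho r}\|X^\eps\|_{\gamma,\rho,\sim}$. I would partition $[0,s]$ into the blocks $[j\delta,(j+1)\delta]$ for $j=0,1,\dots,k$ (the top one truncated at $s$) and observe that on the $j$-th block the kernel integral $\frac{C_1}{\eps}\int e^{-(\lambda_B-C_1)(s-r)/\eps}dr$ is bounded by $\frac{C_1}{\lambda_B-C_1}$ times an additional damping factor $e^{-(\lambda_B-C_1)(k-1-j)\delta/\eps}$. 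For $j\le k-2$ this damping is $o(1)$ once $\eps$ is small compared with $\delta$, so the only non-negligible contributions come from $j=k$ and, when $k\ge 2$, from $j=k-1$.

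For the top block $j=k$ I would keep $(r-k\delta)^\gamma$ rather than bound it by $\delta^\gamma$, change variables $u=s-r$, and obtain a pointwise estimate of the form $C(k\delta)^{-\gamma}(s-k\delta)^\gamma e^{\rho s}(1+\|X_0\|)$; for $j=k-1$ with $k\ge 2$, the inequality $((k-1)\delta)^{-\gamma}\le 2^\gamma(k\delta)^{-\gamma}$ produces an analogous bound. In the exceptional case $k=1$, the $j=0$ block yields only a bounded contribution $Ce^{\rho s}(1+\|X_0\|)$, which after integration in $s$ becomes $C\delta(1+\|X_0\|)=C\delta^{1+\gamma}(k\delta)^{-\gamma}(1+\|X_0\|)$ and is therefore already accounted for by the $(k\delta)^{-\gamma}$ term of the target bound. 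Multiplying by $e^{-\rho t}\le e^{-\rho s}$ and using $\int_{k\delta}^{(k+1)\delta}(s-k\delta)^\gamma\,ds=\delta^{1+\gamma}/(1+\gamma)$ then delivers $C\delta^{1+\gamma}(1+(k\delta)^{-\gamma})$, with the additive $1$ serving as a convenient uniform upper bound in $k$.

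The main obstacle is the degeneracy of the $\sim$-H\"older seminorm at $r=0$, which prevents extracting a $\delta^\gamma$ gain for the first block $[0,\delta)$. I would circumvent this by exploiting the exponential semigroup decay to push that block far from $s$ whenever $k\ge 2$ (which is precisely where the hypothesis that $\eps$ be sufficiently small enters, namely so that $e^{-(\lambda_B-C_1)\delta/\eps}$ is usefully small), while for $k=1$ the unavoidable $O(\delta)$ contribution is absorbed into the $(k\delta)^{-\gamma}=\delta^{-\gamma}$ factor that the target bound permits.
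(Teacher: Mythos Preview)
Your argument is correct but follows a genuinely different route from the paper. The paper restarts the mild form at time $k\delta$ (using the recursive representation (\ref{yhat}) rather than the global form (\ref{yhat0})), so that the difference carries an initial-condition mismatch $e^{-\lambda_B(s-k\delta)/\eps}\|Y^\eps(k\delta)-\hat Y^\eps(k\delta)\|$; this is bounded crudely by $\|Y^\eps\|_\infty+\|\hat Y^\eps\|_\infty\le C(1+\|X_0\|+\|Y_0\|+o(\eps^{-1}))$ via Lemma~\ref{ybound}, and after integration in $s$ contributes $C\eps(1+o(\eps^{-1}))$, which the paper then declares $\le\delta^{1+\gamma}$ for $\eps$ small. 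The payoff is that the Gr\"onwall step and the $X^\eps$-increment integral live only on $[k\delta,s]$, where $r_\delta=k\delta>0$ and the $\sim$-H\"older weight never degenerates, so the remaining estimate is a one-line computation giving $C\delta^{1+\gamma}(k\delta)^{-\gamma}$. Your approach instead subtracts the global mild forms from $t=0$, which kills the initial-condition mismatch entirely (and lets you avoid Lemma~\ref{ybound}), but forces you to control the full history $\int_0^s e^{-(\lambda_B-C_1)(s-r)/\eps}\|X^\eps(r)-X^\eps(r_\delta)\|\,dr$. You then handle the degeneracy at $r_\delta=0$ and the far blocks $j\le k-2$ by exploiting the exponential damping $e^{-(\lambda_B-C_1)\delta/\eps}$, which is where your ``$\eps$ small enough'' enters. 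Both routes are valid; the paper's localization is shorter once Lemma~\ref{ybound} is in hand, while yours is more self-contained and perhaps conceptually cleaner in that the only source of the final ``$1$'' in $(1+(k\delta)^{-\gamma})$ is the small-$\eps$ tail rather than an a~priori $L^\infty$ bound on the fast variables.
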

     \begin{proof} For $s\in[k\delta,\min\{(k+1)\delta,T\})$, by Lemma \ref{ybound},
one has
         \begin{eqnarray*}
             \|Y^\eps(s)-\hat Y^\eps(s)\|&\le &e^{-\frac{\lambda_B}{\eps} (s-k\delta) }\|
Y^\eps(k\delta)-\hat Y^\eps(k\delta)\|\cr
&&+
\bigg\|\frac{1}{\eps}\int_{k\delta}^sS_{\frac{B}{\eps}}(s-r)(g(X^{\eps}(r),Y^{\eps}(r))-g(X^{\eps}(r_{\delta}),\hat
Y^{\eps}(r)))\,dr\bigg\|\cr
             &\le
&C e^{-\frac{\lambda_B}{\eps} (s-k\delta) }(\|
Y^\eps\|_{\infty}+\|\hat Y^\eps\|_{\infty})\cr
             &&+\frac{C_1}{\eps}\int_{k\delta}^se^{-\frac{\lambda_B}{\eps}(s-r)}\|X^{\eps}(r)-X^{\eps}(r_{\delta})\|\,dr\cr
             &&+
\frac{C_1}{\eps}\int_{k\delta}^se^{-\frac{\lambda_B}{\eps}(s-r)}\|Y^{\eps}(r)-\hat
Y^{\eps}(r)\|\,dr.
         \end{eqnarray*}

         Then, multiplying both sides of the above equation by
$e^{\frac{\lambda_B}{\eps}s}$, we have
         \begin{eqnarray*}
             e^{\frac{\lambda_B}{\eps}s}\|Y^\eps(s)-\hat Y^\eps(s)\|
&\le&C e^{\frac{\lambda_B}{\eps} k\delta }\big(1+\|X_0\|+\|Y_0\|+o(\eps^{-1})
         \big)\cr
             &&+\frac{C_1}{\eps}\int_{k\delta}^se^{\frac{\lambda_B}{\eps}r}\|X^{\eps}(r)-X^{\eps}(r_{\delta})\|\,dr\cr
             &&+
\frac{C_1}{\eps}\int_{k\delta}^se^{\frac{\lambda_B}{\eps}r}\|Y^{\eps}(r)-\hat
Y^{\eps}(r)\|\,dr.
         \end{eqnarray*}
         By the Gr\"onwall inequality \cite[p.37]{coddington1995theory} and \cite[p.13]{duan2014effective}, we have
         \begin{eqnarray*}
             \|Y^\eps(s)-\hat Y^\eps(s)\|
            & \leq& C e^{\frac{-\lambda_B}{\eps} (s-k\delta) }\big(1+\|X_0\|+\|Y_0\|+o(\eps^{-1})
         \big)e^{\frac{C_1}{\eps}(s-k\delta)}\cr
             &&+
\frac{C_1}{\eps}\int_{k\delta}^se^{\frac{-(\lambda_B-C_1)}{\eps}(s-r)}\|X^{\eps}(r)-X^{\eps}(r_{\delta})\|\,dr.
         \end{eqnarray*}
Next, multiplying both sides of the above equation by
$e^{-\rho t}$ with $t>s, \rho >1$, we have
 \begin{align*}
           e^{-\rho t} & \|Y^\eps(s)-\hat Y^\eps(s)\|\\
\leq&  C e^{-\rho t} e^{\frac{-(\lambda_B-C_1)}{\eps} (s-k\delta) }\big(1+\|X_0\|+\|Y_0\|+o(\eps^{-1})
         \big)\\
             &+  \frac{C_1}{\eps}\int_{k\delta}^se^{\frac{-(\lambda_B-C_1)}{\eps}(s-r)}(r-r_\delta)^\gamma r_\delta^{-\gamma} e^{-\rho (t-r)}\frac{r_\delta^\gamma e^{-\rho r}\|X^{\eps}(r)-X^{\eps}(r_{\delta})\|}{(r-r_\delta)^\gamma}\,dr\\
\leq & C e^{\frac{-(\lambda_B-C_1)}{\eps} (s-k\delta) }\big(1+\|X_0\|+\|Y_0\|+o(\eps^{-1})
         \big)\\
             &+\delta^\gamma\|X^\eps\|_{\gamma,\rho,\sim}\frac{C_1}{\eps}\int_{k\delta}^se^{\frac{-(\lambda_B-C_1)}{\eps}(s-r)} r_\delta^{-\gamma} \,dr.
\end{align*}

         Integrate the above inequality from $k\delta$ to $(k+1)\delta$, by
\cref{xbound}, we have
         \begin{align*}
                       e^{-\rho t}   &\int_{k\delta}^{(k+1)\delta}\|Y^\eps(s)-\hat Y^\eps(s)\|\,ds\\
\leq & C \int_{k\delta}^{(k+1)\delta} e^{\frac{-(\lambda_B-C_1)}{\eps} (s-k\delta) }\big(1+\|X_0\|+\|Y_0\|+o(\eps^{-1})
         \big)\,ds \\
             &+ \int_{k\delta}^{(k+1)\delta}\delta^\gamma\|X^\eps\|_{\gamma,\rho,\sim}\frac{C_1}{\eps}\int_{k\delta}^se^{\frac{-(\lambda_B-C_1)}{\eps}(s-r)} r_\delta^{-\gamma} \,dr \, ds\\
             \leq&     C\eps\big(1+\|X_0\|+\|Y_0\|+o(\eps^{-1}))+ C \delta^\gamma\|X^\eps\|_{\gamma,\rho,\sim} \int_{k\delta}^{(k+1)\delta}s_\delta^{-\gamma}ds\\
  \leq &   C\eps\big(1+\|X_0\|+\|Y_0\|+o(\eps^{-1}))+ C(1+\|X_0\|) \delta^{1+\gamma}(k\delta)^{-\gamma}\\
\leq& C \delta^{1+\gamma}(1+(k\delta)^{-\gamma})
         \end{align*}
where we take $\eps\big(1+\|X_0\|+\|Y_0\|+o(\eps^{-1}))\leq\delta^{1+\gamma}$ for $\eps$ small enough.
     \end{proof}
             \begin{lemma}\label{bar0}
                  Let {\rm (A1)-(A4)} and {\rm (\ref{eq3})} hold. For any
             $X_0\in V$, as
             $\epsilon\rightarrow0$ the solution of {\rm (\ref{xhat})}
converges to
             $\bar X$ which solves {\rm (\ref{x-ave})}
                  \[
                  \lim\limits_{\epsilon\rightarrow0}\|\hat X^\eps-\bar
             X\|_{\gamma,\sim} =0
                  \]
                  where this norm is considered with respect to a fixed
interval
             $[0,T]$.
             \end{lemma}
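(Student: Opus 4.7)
The plan is to decompose $\hat X^\eps - \bar X = \Phi_{\mathrm{drift}} + \Phi_{\mathrm{diff}}$ into drift and stochastic parts, and to estimate each in the weighted Hölder norm $\|\cdot\|_{\gamma,\rho,\sim}$ so that the exponential weight absorbs any self-referential $\|\hat X^\eps - \bar X\|$-terms once $\rho$ is chosen large. Because $\hat X^\eps$ and $X^\eps$ share the same diffusion integrand $h(X^\eps(r))$, the stochastic difference $\Phi_{\mathrm{diff}}(t) = \int_0^t S_A(t-r)[h(X^\eps(r)) - h(\bar X(r))]\,d\omega_1(r)$ is handled via the Lipschitz splitting $h(X^\eps) - h(\bar X) = (h(X^\eps) - h(\hat X^\eps)) + (h(\hat X^\eps) - h(\bar X))$ together with the fractional-integral bounds used in Lemma \ref{tt'}, yielding
\[
\|\Phi_{\mathrm{diff}}\|_{\gamma,\rho,\sim} \le C(\rho,\omega_1,T)\bigl(\|X^\eps - \hat X^\eps\|_{\gamma,\rho,\sim} + \|\hat X^\eps - \bar X\|_{\gamma,\rho,\sim}\bigr)
\]
with $C(\rho,\omega_1,T)\to 0$ as $\rho\to\infty$. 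A short auxiliary estimate -- combining the identity $X^\eps - \hat X^\eps = \int_0^t S_A(t-r)[f(X^\eps(r),Y^\eps(r)) - f(X^\eps(r_\delta),\hat Y^\eps(r))]\,dr$ with Lipschitz continuity of $f$, Lemma \ref{y-yhat}, the Hölder bound $\|X^\eps(r) - X^\eps(r_\delta)\|\le \delta^\gamma\|X^\eps\|_{\gamma,\rho,\sim}$ and Lemma \ref{xbound} -- shows that $\|X^\eps - \hat X^\eps\|_{\gamma,\rho,\sim}\to 0$ as $\eps\to 0$ along a suitable choice $\delta=\delta(\eps)\to 0$.

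For $\Phi_{\mathrm{drift}} = \int_0^t S_A(t-r)[f(X^\eps(r_\delta),\hat Y^\eps(r)) - \bar f(\bar X(r))]\,dr$ I would invoke the classical Khasminskii decomposition, inserting $\pm f(X^\eps(r_\delta),Y^\eps_F(\theta_r\omega_2,X^\eps(r_\delta)))$ and $\pm\bar f(X^\eps(r_\delta))$ to obtain three pieces $\Phi^{(a)}+\Phi^{(b)}+\Phi^{(c)}$. Piece (a), controlled by $\|\hat Y^\eps(r) - Y^\eps_F(\theta_r\omega_2,X^\eps(r_\delta))\|$, is estimated sub-interval by sub-interval via Lemma \ref{y1-y2}, giving an $O(\eps)$-contribution. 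Piece (c), governed by $\|X^\eps(r_\delta) - \bar X(r)\|\le \delta^\gamma\|X^\eps\|_{\gamma,\rho,\sim} + \|X^\eps - \hat X^\eps\|_\infty + \|\hat X^\eps - \bar X\|_\infty$, is handled by the Lipschitz continuity of $\bar f$ from Lemma \ref{flip} and is again absorbed through the $\rho$-weight together with the first paragraph's estimate.

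The main obstacle is the ergodic piece
\[
\Phi^{(b)}(t) = \sum_k \int_{k\delta}^{(k+1)\delta} S_A(t-r)\bigl[f(X^\eps(k\delta),Y^\eps_F(\theta_r\omega_2,X^\eps(k\delta))) - \bar f(X^\eps(k\delta))\bigr]\,dr.
\]
On each sub-interval $[k\delta,(k+1)\delta]$ I would replace $S_A(t-r)$ by $S_A(t-k\delta)$, with the error controlled by \eqref{semi3} and the boundedness of $f$; the frozen-coefficient residue is then factored as $S_A(t-k\delta)(-A)^\nu\cdot(-A)^{-\nu}\!\int_{k\delta}^{(k+1)\delta}[\cdots]\,dr$ for some $\nu\in(0,1)$, using \eqref{semi1} to bound $\|S_A(t-k\delta)(-A)^\nu\|\le c(t-k\delta)^{-\nu}e^{-\lambda_A(t-k\delta)}$. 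The substitution $r=\eps r'$ together with Lemma \ref{z1-equal} turns the inner integral into $\eps$ times an integral over the long window $[k\delta/\eps,(k+1)\delta/\eps]$; writing the latter as a telescoping difference of integrals based at $0$ and invoking Lemma \ref{l3} gives $\|(-A)^{-\nu}\!\int_{k\delta}^{(k+1)\delta}[\cdots]dr\|\le \zeta\cdot O(k\delta)$ for any pre-chosen $\zeta>0$, once $\eps$ is small enough that $\delta/\eps$ exceeds the threshold $T_0(\omega_2,\zeta)$ from the ergodic theorem. A Riemann-sum estimate for $\sum_k \delta\,k(t-k\delta)^{-\nu}$ bounds the total by $C\zeta/\delta$, and a balanced choice $\delta=\delta(\eps)\to 0$ slowly with $\zeta=\zeta(\eps)$ vanishing faster (for instance $\delta=\eps^{1/4}$, $\zeta=\eps^{1/2}$) makes the sum negligible.

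The subtlest technical point inside this step is the uniformity of Lemma \ref{l3} in the random argument $X^\eps(k\delta)$, which varies with both $k$ and $\eps$; I would handle this by using the uniform boundedness of $X^\eps$ from Lemma \ref{xbound}, the uniform Lipschitz continuity of $\bar f$ (Lemma \ref{flip}) and of $x\mapsto Y^1_F(\omega_2,x)$, and the same density-on-a-countable-set device that appears in the proof of Lemma \ref{l3}, to transfer the ergodic estimate from the countable dense set to all values attained along the trajectory. Collecting all the bounds, taking $\rho$ large to absorb the residual $\|\hat X^\eps-\bar X\|_{\gamma,\rho,\sim}$-term, and letting $\eps\to 0$ yields $\|\hat X^\eps-\bar X\|_{\gamma,\rho,\sim}\to 0$ almost surely, and norm equivalence on the fixed interval $[0,T]$ delivers the stated conclusion $\|\hat X^\eps-\bar X\|_{\gamma,\sim}\to 0$.
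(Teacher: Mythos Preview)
Your overall architecture---split into drift and diffusion, Khasminskii decomposition of the drift, absorb self-referential terms via large $\rho$---matches the paper. But there is a genuine gap in how you handle the ergodic piece $\Phi^{(b)}$, and it is precisely the point where the paper's argument is more delicate than you indicate.

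\textbf{The gap.} You freeze the ergodic integrand at $X^\eps(k\delta)$, an $\eps$-dependent point, and then let $\delta=\delta(\eps)\to 0$. To make the averages small you need, for \emph{every} relevant $k$ and $\eps$, that the time window $[k\delta/\eps,(k+1)\delta/\eps]$ is beyond the threshold $T_0(\omega_2,\zeta,x)$ coming from Lemma~\ref{l3}. But Lemma~\ref{l3} gives only pointwise convergence in $x$: its proof chooses, for each $x$, an approximant $x_{\tilde n}\in D$ and then a $T_0$ depending on $x_{\tilde n}$. No uniform bound on $T_0$ over bounded sets of $x$ is established, and your proposed density-plus-Lipschitz device does not produce one in infinite dimensions (bounded sets in $V$ are not totally bounded, so you cannot reduce to a finite subcover). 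Consequently your balanced choice $\delta=\eps^{1/4}$, $\zeta=\eps^{1/2}$ presumes a quantitative ergodic rate that is nowhere available. Even if you kept $\delta$ fixed, the argument $X^\eps(k\delta)$ still varies with $\eps$, and you would again need uniformity in $x$ that Lemma~\ref{l3} does not give.

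\textbf{What the paper does instead.} The paper telescopes the drift through a longer chain
\[
X^\eps(r_\delta)\to X^\eps(r)\to \hat X^\eps(r)\to \bar X(r)\to \bar X(r_\delta)
\]
inside the argument of $Y_F^\eps$, so that the ergodic term (their $I_6$) is evaluated at $\bar X(r_\delta)$, which is \emph{independent of $\eps$}. It then \emph{fixes} $\delta$ (so that $C\delta^{\tilde\sigma}$ is below the target $\mu$), leaving only finitely many breakpoints $k=0,\ldots,\lfloor T/\delta\rfloor-1$. For each such $k$ the point $\bar X(k\delta)$ is a single fixed element of $V$, and Lemma~\ref{l3} applies directly; the maximum over finitely many $k$ then tends to zero as $\eps\to 0$. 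The intermediate telescoping steps ($I_2$--$I_5$) are controlled by the Lipschitz continuity of $f$ and of $x\mapsto Y_F^\eps(\omega_2,x)$ together with the uniform H\"older bound of Lemma~\ref{xbound}, and are of order $\delta^{\tilde\sigma}$. This two-scale strategy---first fix $\delta$, then send $\eps\to 0$---is the missing idea in your sketch.
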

                  \begin{proof}
             For the following we fix $\gamma<\sigma<1-\sigma'',
             \sigma^\prime<1-\gamma$ and define $\tilde
             \sigma=\min\{\sigma^\prime,\sigma'',\gamma\}$. We will
             show that for almost every
             $(\omega_1,\omega_2)$ and every
             $\mu>0$  there exists an $\eps_0>0$ so that for
             $\eps<\eps_0$, $\rho>\rho_0$ we have
             \begin{equation}\label{eq9}
                      \|\hat X^\eps-\bar X\|_{\gamma,\rho,\sim}\le \mu.
             \end{equation}
             Note that the norm here is equivalent to the norm in the
conclusion.
             In the following proof a constant $C$ appears. This
constant can change
             from inequality to  inequality.  $C$ may depend on
             $T,\,\omega_1,\,\omega_2,\, \sigma^\prime,
             \,\sigma'', \,\gamma$ and
             other parameters like the Lipschitz constant of $f$ and of
$x\mapsto
             Y_F^\eps(\omega_2,x)$. But $C$ does not depend on
             $\mu,\,\eps,\,\rho,\,\delta$. Here $\delta\in (0,1)$ is a
parameter
             depending on $\mu$.
             To estimate all the terms in the following inequality we have to consider 3 cases. For the first case the right hand side will be absorbed by the left hand side of the inequality when $\rho$ is sufficiently large. The second case includes terms providing estimates like $C\delta^{\tilde \sigma},\tilde \sigma >0$ where $C$ is a priori determined by   $T,\,\omega_1,\,\omega_2,\, \sigma^\prime, \,\sigma'', \,\gamma$  but independent of $\mu,\,\eps,\,\rho,\,\delta$, then we choose {\it fixed} $\delta$ so that $C\delta^{\tilde \sigma}<\lambda \mu,$ $\lambda>0$ sufficiently small. The third case contains terms providing an estimate $C\delta^{-\tilde \sigma},\tilde \sigma >0$ which can be made arbitrarily small when $\eps$
is sufficiently small, taking in account that $\delta$ is fixed.\\

               By applying triangle inequality to $    \|\hat X^\eps-\bar
             X\|_{\gamma,\rho,\sim}$, we obtain
                  \begin{eqnarray*}
                  &&\|\hat X^\eps-\bar X\|_{\gamma,\rho,\sim} \cr
&\le&\bigg\|\int_{0}^{\cdot}S_{A}(\cdot-r)(f(X^\eps(r_{\delta}),\hat
Y^\eps(r))-f(X^\eps(r_{\delta}),Y_{F}^\eps(\theta_{r}\omega_2,
             X^\eps(r_\delta))))\,dr\bigg\|_{\gamma,\rho,\sim}\cr
&&+\bigg\|\int_{0}^{\cdot}S_{A}(\cdot-r)\Delta_f(X^\eps(r_{\delta});X^\eps
             (r))\,dr\bigg\|_{\gamma,\rho,\sim}\cr
             &&+
\bigg\|\int_{0}^{\cdot}S_{A}(\cdot-r)\Delta_f(X^\eps(r);\hat X^\eps
             (r))\,dr\bigg\|_{\gamma,\rho,\sim}\cr
                      &&+ \bigg\|\int_{0}^{\cdot}S_{A}(\cdot-r)\Delta_f(\hat
             X^\eps(r);\bar X
             (r))\,dr\bigg\|_{\gamma,\rho,\sim}\cr
&&+\bigg\|\int_{0}^{\cdot}S_{A}(\cdot-r)\Delta_f(\bar
             X (r);\bar X(r_{\delta}))\,dr\bigg\|_{\gamma,\rho,\sim}\cr
&&+\bigg\|\int_{0}^{\cdot}S_{A}(\cdot-r)(f(\bar
             X(r_{\delta}),Y_{F}^\eps(\theta_{r}\omega_2, \bar
X(r_\delta)))-\bar
             f(\bar X(r_{\delta})))\,dr\bigg\|_{\gamma,\rho,\sim}\cr
&&+\bigg\|\int_{0}^{\cdot}S_{A}(\cdot-r)(\bar f(\bar
             X(r_{\delta}))-\bar f(\bar
X(r)))\,dr\bigg\|_{\gamma,\rho,\sim}\cr
&&+\bigg\|\int_{0}^{\cdot}S_{A}(\cdot-r)(h(X^\eps(r))-h(\hat
             X^\eps(r)))\,d\omega_1(r)\bigg\|_{\gamma,\rho,\sim}\cr
&&+\bigg\|\int_{0}^{\cdot}S_{A}(\cdot-r)(h(\hat
             X^\eps(r))-h(\bar
X(r)))\,d\omega_1(r)\bigg\|_{\gamma,\rho,\sim}=:\sum_{i=1}^{9}I_i
                  \end{eqnarray*}
                  where $r_{\delta}=\lfloor r / \delta\rfloor \delta$
is the nearest
             breakpoint preceding $r$ and for $U(r),\hat
U(r)\in V$
                  \begin{eqnarray*}
             \Delta_f(U(r);\hat U(r))&:=&
f(U(r),Y_{F}^\eps(\theta_{r}\omega_2,
             U(r)))-f(\hat U(r),Y_{F}^\eps(\theta_{r}\omega_2, \hat U(r))).
                  \end{eqnarray*}
             To proceed, we adapt the approach used in the proof of
\cref{tt} to
             estimate $I_2$.
               \begin{eqnarray*}
                      I_2&\le & \sup_{t\in[0, T]}e^{-\rho
t}\bigg\|\int_{0}^{t}S_{A}(t-r)\Delta_f(X^\eps(r);X^\eps(r_\delta))\,dr\bigg\|\cr
                      &&+\sup_{0< s<t\le T} s^{\gamma}e^{-\rho
t}(t-s)^{-\gamma}\bigg\|\int_{s}^{t}S_{A}(t-r)\Delta_f(X^\eps(r);X^\eps(r_\delta))\,dr\bigg\|\cr
                      &&+\sup_{0< s<t\le T} s^{\gamma}e^{-\rho
t}(t-s)^{-\gamma}\bigg\|\int_{0}^{s}(S_A(t-r)-S_A(s-r))\Delta_f(X^\eps(r);X^\eps(r_\delta))\,dr\bigg\|\cr
                      &=:& I_{21}+I_{22}+I_{23}.
                  \end{eqnarray*}

             For $I_{21}$, if $ 0 \le t<\delta$, it is easy to see
$I_{21}\le
             C\delta$, then we consider that $\delta\le t$  and by the
Lipschitz
             continuity of $f$, $Y_F^\eps$ and Lemma \ref{xbound}, also the
             boundedness of $f$,
             \begin{eqnarray*}
                      I_{21}
                      &\le&  C \bigg(\sup_{t\in[0, \delta]}e^{-\rho
t}\int_{0}^{t}\|\Delta_f(X^\eps(r);X^\eps(r_{\delta}))\|\,dr+  \sup_{t\in[\delta, T]}e^{-\rho
t}\int_{0}^{t}\|\Delta_f(X^\eps(r);X^\eps(r_{\delta}))\|\,dr\bigg)\cr
  &\le&C\delta+C \sup_{t\in[\delta, T]}e^{-\rho
t}\bigg(\int_{0}^{\delta}\|\Delta_f(X^\eps(r);X^\eps(r_{\delta}))\|\,dr+\int_{\delta}^{t}\|\Delta_f(X^\eps(r);X^\eps(r_{\delta}))\|\,dr\bigg)\cr
                      &\le& C\delta+ C\delta+C\sup_{t\in[\delta,
T]}\int_{\delta}^{t}e^{-\rho
             (t-r)}(r-r_{\delta})^\gamma r_{\delta}^{-\gamma}
             \frac{r_{\delta}^{\gamma}e^{-\rho
r}\|X^\eps(r)-X^\eps(r_{\delta})\|}{(r-r_{\delta})^\gamma}\,dr\cr
                      &\le& C\delta+C\delta^{\gamma} \sup_{t\in[\delta,
             T]}\bigg(\int_{\delta}^{t}r_{\delta}^{-\gamma}\,dr
             \bigg)\ltn X^\eps\rtn_{\gamma,\rho,\sim}\cr
             &\le& C\delta+C \delta^{\gamma}\sup_{t\in[\delta,
T]}\bigg(\int_{t_\delta}^{t}r_{\delta}^{-\gamma}\,dr+\sum_{k=1}^{\lfloor
             t / \delta \rfloor -1}
\int_{k\delta}^{(k+1)\delta}(k\delta)^{-\gamma}
             dr\bigg)\ltn X^\eps\rtn_{\gamma,\rho,\sim}\cr
             &\le& C\delta+C\delta^\gamma\sup_{t\in[\delta,
T]}\bigg(\delta^{1-\gamma}+
             \delta^{1-\gamma}\sum_{k=1}^{\lfloor t / \delta \rfloor -1}
             \int_{k-1}^{k}k^{-\gamma}\,dv\bigg)\ltn
X^\eps\rtn_{\gamma,\rho,\sim} \cr
             &\le& C\delta+C\delta^\gamma \sup_{t\in[\delta,
T]}\bigg(\delta^{1-\gamma}+
             \delta^{1-\gamma}\int_{0}^{\lfloor t / \delta \rfloor
             -1}v^{-\gamma}\,dv\bigg)\ltn
X^\eps\rtn_{\gamma,\rho,\sim}\le C\delta^\gamma.
                  \end{eqnarray*}
Here, by Lemma \ref{xbound}, $\ltn
X^\eps\rtn_{\gamma,\rho,\sim}$
             can be estimated independently of $\eps>0$.

             For $I_{22}$ and $I_{23}$ we divide the estimate into two cases.
 Consider at first the integral under the
supremum of $I_{22}$ for
             $0<s<t\le T$ and $s<2\delta$:
             If $s<t\leq 2\delta$, it is easy to see that this integral
is less than
             $C \delta^{1-\gamma}$, then, we consider $s<2\delta<t $
             \begin{eqnarray*}
                  & & \sup_{0<s<2\delta<t\le T} s^{\gamma}e^{-\rho
t}(t-s)^{-\gamma}\bigg\|\int_{s}^{2\delta}S_{A}(t-r)\Delta_f(X^\eps(r);X^\eps(r_\delta))\,dr\bigg\|\cr
             &&+\sup_{0<s<2\delta<t\le T} s^{\gamma}e^{-\rho
t}(t-s)^{-\gamma}\bigg\|\int_{2\delta}^{t}S_{A}(t-r)\Delta_f(X^\eps(r);X^\eps(r_\delta))\,dr\bigg\|\cr
             &\le &\sup_{0<s<2\delta<t\le T} s^{\gamma}e^{-\rho
t}(t-s)^{-\gamma}(2\delta-s)^{\gamma}(2\delta-s)^{1-\gamma}\cr
             &&+C \sup_{0<s<2\delta<t\le T}
(t-s)^{-\gamma}\int_{2\delta}^{t}e^{-\rho(t-r)}s^{\gamma}r_{\delta}^{-\gamma}(r-r_{\delta})^{\gamma}\frac{r_{\delta}^{\gamma}e^{-\rho
r}\|X^\eps(r)-X^\eps(r_\delta)\|}{(r-r_{\delta})^\gamma}\,dr\cr
             &\le& C\delta^{\gamma}(1+\ltn X^\eps\rtn_{\gamma,\rho,\sim}) \leq C \delta ^{\gamma}
                  \end{eqnarray*}
             where $s^{\gamma}r_{\delta}^{-\gamma}\le 1$ for
every
             $\delta>0$ and  the Lipschitz continuity of $f$,
$Y_F^\eps$, Lemma
             \ref{xbound}, and the boundedness of $f$.  For
the integral
             under supremum of $I_{23}$ we have
             \begin{eqnarray*}
&&\sup_{0<s<t\le T \atop s<2\delta}  s^{\gamma}e^{-\rho
t}(t-s)^{-\gamma}\bigg\|\int_{0}^{s}(S_A(t-r)-S_A(s-r))\Delta_f(X^\eps(r);X^\eps(r_\delta))\,dr\bigg\|\cr
             &\le &C\sup_{0<s<t\le T \atop s<2\delta} s^{\gamma}e^{-\rho
t}(t-s)^{-\gamma}\int_{0}^{s}(t-s)^\gamma(s-r)^{-\gamma}\,dr
             \le C\delta
                  \end{eqnarray*}
                  where we use the fact that $f$ is bounded.

We continue with the area  $0<s<t\le T$ and $2
\delta \le s$ for $I_{22}$ :
             \begin{eqnarray*}
    &&\sup_{2\delta \le s<t\le T} s^{\gamma}e^{-\rho
t}(t-s)^{-\gamma}\bigg\|\int_{s}^{t}S_{A}(t-r)\Delta_f(X^\eps(r);X^\eps(r_\delta))\,dr\bigg\|\cr
             &\le & \sup_{2\delta \le s<t\le T}
\int_{s}^{t}\frac{\|S_{A}(t-r)\| s^{\gamma}r_{\delta}^{-\gamma}e^{-\rho(t-r)}r_{\delta}^{\gamma}e^{-\rho
             r}
\|X^\eps(r)-X(r_{\delta})\|}{(t-s)^{\gamma}(r-r_{\delta})^{-\gamma}(r-r_{\delta})^{\gamma}}\,dr\cr
             &\le &C  \sup_{2\delta \le s<t\le T}
\int_{s}^{t}\frac{s^{\gamma}r_{\delta}^{-\gamma}e^{-\rho(t-r)}}{(t-s)^{\gamma}(r-r_{\delta})^{-\gamma}}\,dr \ltn
             X^\eps\rtn_{\gamma,\rho,\sim}
             \le C\delta^\gamma.
             \end{eqnarray*}
             where $s^{\gamma}r_{\delta}^{-\gamma}\le c$
independently of
             $\delta >0$ and by the Lipschitz continuity of $f$,
$Y_F^\eps$ and
             Lemma \ref{xbound}.  Next,  for
 $I_{23}$ we have
             \begin{eqnarray}
             &&    \sup_{2\delta \le s<t\le T} s^{\gamma}e^{-\rho
t}\frac{\|\int_{0}^{2\delta}(S_A(t-r)-S_A(s-r))\Delta_f(X^\eps(r);X^\eps(r_\delta))\,dr\|}{(t-s)^{\gamma}}\cr
                  &&+\sup_{2\delta \le s<t\le T} s^{\gamma}e^{-\rho
t}\frac{\|\int_{2\delta}^{s}(S_A(t-r)-S_A(s-r))\Delta_f(X^\eps(r);X^\eps(r_\delta))\,dr\|}{(t-s)^{\gamma}}\cr
             &\le&      \sup_{2\delta \le s<t\le T} s^{\gamma}e^{-\rho
             t}\frac{\int_{0}^{2\delta}\|(S(t-s)-{\rm id})
S_A(s-r)\|\|\Delta_f(X^\eps(r);X^\eps(r_\delta))\|\,dr}{(t-s)^{\gamma}}\cr
             &&+ \sup_{2\delta \le s<t\le
             T}\frac{\int_{2\delta}^{s}s^{\gamma}\|(S(t-s)-{\rm id})
             S_A(s-r)\|e^{-\rho(t-r)}e^{-\rho r}\|X^\eps(r)-X^\eps
             (r_\delta)\|\,dr}{(t-s)^{\gamma}}\nonumber \cr
&\le &   C\sup_{2\delta \le s<t\le T} s^{\gamma}e^{-\rho
t}(t-s)^{-\gamma}\int_{0}^{2\delta}(t-s)^\gamma(s-r)^{-\gamma}\,dr\cr
&&+ C \sup_{2\delta \le s <t\le
T}\int_{2\delta}^{s}\frac{s^{\gamma}r_{\delta}^{-\gamma}(t-s)^\gamma(s-r)^{-\gamma}
e^{-\rho(t-r)}r_{\delta}^{\gamma}e^{-\rho r}\|X^\eps(r)-X^\eps
(r_\delta)\|}{(t-s)^{\gamma}(r-r_{\delta})^{-\gamma}(r-r_{\delta})^{\gamma}}\,dr\cr
&\le & C\sup_{2\delta \le s<t\le T} s^{\gamma}e^{-\rho
t}\int_{0}^{2\delta}(2\delta -r)^{-\gamma}\,dr+ C \sup_{2\delta \le s <t\le
T}s^{\gamma}\delta^{\gamma}\int_{2\delta}^{s}\big(\frac{r}{r_{\delta}}\big)^{\gamma}r^{-\gamma}(s-r)^{- \gamma}\,dr\cr
&\le& C\delta^{1-\gamma}+C \sup_{2\delta \le s <t\le
T}s^{\gamma}\delta^{\gamma}\int_{0}^{s}r^{-\gamma}(s-r)^{- \gamma} \,dr  \leq C\delta^{\tilde \sigma}
             \end{eqnarray}
          where we use
             the Lipschitz continuity of $f$, $Y_F^\eps$ and Lemma
\ref{xbound}, also
             the boundedness of $f$.
             Thus, putting above estimates together, we have
             \begin{align*}
                      I_2 \le  C\delta^{\tilde \sigma}.
                  \end{align*}

             Based on the Lipschitz continuity of $f, \bar f$,
$Y_F^\eps$ and
             boundedness of $f, \bar f$, \cref{xbarb} we can apply the
estimates for
             $I_2$ to estimate $I_5$ and $I_7$. We have
                  \begin{align*}
                      I_5+I_7 \le C\delta^\gamma.
                  \end{align*}

             Then, by the Lipschitz continuity of $ f$, $Y_F^\eps$ and
             \cref{xbarb} again, we have
                  \begin{eqnarray*}
                      I_4&\le & \sup_{t\in[0, T]}e^{-\rho
             t}\bigg\|\int_{0}^{t}S_{A}(t-r)\Delta_f(\hat X^\eps(r);\bar X
             (r))\,dr\bigg\|\cr
                      &&+\sup_{0< s<t\le T} s^{\gamma}e^{-\rho
t}(t-s)^{-\gamma}\bigg\|\int_{s}^{t}S_{A}(t-r)\Delta_f(\hat
             X^\eps(r);\bar X (r))\,dr\bigg\|\cr
                      &&+\sup_{0< s<t\le T} s^{\gamma}e^{-\rho
t}(t-s)^{-\gamma}\bigg\|\int_{0}^{s}(S_A(t-r)-S_A(s-r))\Delta_f(\hat
             X^\eps(r);\bar X (r))\,dr\bigg\|\cr
                      &=:& I_{41}+I_{42}+I_{43}.
                  \end{eqnarray*}

                  For the first term above, we have
                  \begin{eqnarray*}
                      I_{41}
                      \le C \sup_{t\in[0, T]}\int_{0}^{t}e^{-\rho
(t-r)}e^{-\rho
             r}\|\hat X^\eps(r)-\bar X(r)\|\,dr
                      \le C
\rho^{-1}\sup_{r\in[0,T]}e^{-\rho r}\|\hat
             X^\eps(r)-\bar X(r)\|.
                  \end{eqnarray*}

                  Next, for $I_{42}$, by \cref{inq-rho}, one has
                  \begin{eqnarray*}
                      I_{42}
                      &\le &C  \sup_{0< s<t\le T}
s^{\gamma}\frac{\int_{s}^{t}\|S_{A}(t-r)\|e^{-\rho(t-r)}e^{-\rho
             r}\|\hat X^\eps(r)-\bar X(r)\|\,dr}{(t-s)^{\gamma}}\cr
                  &\le& C \sup_{0< s<t\le T}
\int_{s}^{t}e^{-\rho(t-r)}(t-r)^{-\gamma}\,dr\sup_{r\in[0,T]}e^{-\rho
             r}\|\hat X^\eps(r)-\bar X(r)\|\cr
                  &\le& C  \sup_{0< s<t\le T}
             \int_{0}^{t}e^{-\rho(t-r)}(t-r)^{-\gamma}\,dr\|\hat X^\eps-\bar
             X\|_{\gamma,\rho,\sim}\cr
                      &\le& C \rho^{-1+\gamma}\|\hat
X^\eps-\bar
             X\|_{\gamma,\rho,\sim}.
                  \end{eqnarray*}

                  The third integral on the right hand of $I_4$ can be
estimated by
             \cref{inq-rho}, and taking $\sigma > \gamma$, that is, we
have by Lemmas
             \ref{semi1} and \ref{semi2},
             \begin{eqnarray*}
             \quad I_{43} &\le & \sup_{0< s<t\le T}
s^{\gamma}\frac{\int_{0}^{s}e^{-\rho
             (t-r)}\|(S(t-s)-{\rm id}) S_A(s-r)\|e^{-\rho r}\|\hat
X^\eps(r)-\bar
             X(r)\|\,dr}{(t-s)^{\gamma}}\cr
                      &\le & C  \sup_{0< s<t\le T}
s^{\gamma}\int_{0}^{s}e^{-\rho
(t-r)}(s-r)^{-\sigma}(t-s)^{-\gamma+\sigma}\,dr \sup_{r\in[0,T]}e^{-\rho
             r}\|\hat X^\eps(r)-\bar X(r)\|\cr
                      &\le & C \rho^{-1+ \sigma } \|\hat
X^\eps-\bar
             X\|_{\gamma,\rho,\sim}.
             \end{eqnarray*}

                  Thus, taking $\rho$ large enough, we have
             \begin{eqnarray}\label{i4}
                          I_{4}
                          \le     \frac13 \|\hat X^\eps-\bar
X\|_{\gamma,\rho,\sim}.
             \end{eqnarray}

                  Dealing with $I_6$ we need ergodic theory. Denote
                  \begin{eqnarray*}
                      \Delta^\eps_{f,\bar
             f}(\omega_2,x):=f(x,Y_{F}^\eps(\omega_2,
             x))-\bar f(x), \quad \Delta^1_{f,\bar
             f}(\omega_2,x):=f(x,Y_{F}^1(\omega_2,
x))-\bar f(x)
                      \end{eqnarray*}
                  where $x$ is $\bar X(r_{\delta})$ or $\bar X(k\delta)$,
then, we have
                  \begin{eqnarray*}
                      I_{6}&\le & \sup_{t\in[0, T]}e^{-\rho
             t}\bigg\|\int_{0}^{t}S_{A}(t-r)\Delta^\eps_{f,\bar f}(\theta_r\omega_2,\bar
             X(r_{\delta}))\,dr\bigg\|\cr
                      &&+\sup_{0< s<t\le T} s^{\gamma}e^{-\rho
t}\frac{\big\|\int_{s}^{t}S_{A}(t-r)\Delta^\eps_{f,\bar f}(\theta_r\omega_2,\bar
             X(r_{\delta}))\,dr\big\|}{(t-s)^{\gamma}}\cr
                      &&+\sup_{0< s<t\le T} s^{\gamma}e^{-\rho
t}\frac{\big\|\int_{0}^{s}(S_A(t-r)-S_A(s-r))\Delta^\eps_{f,\bar f}(\theta_r\omega_2,\bar
             X(r_{\delta}))\,dr\big\|}{(t-s)^{\gamma}}\cr
                      &=:& I_{61}+I_{62}+I_{63}.
                  \end{eqnarray*}
                  The first term above can be written
                  \begin{eqnarray*}
                      I_{61}
                      &\le & \sup_{t\in[0,
T]}\bigg\|\int_{0}^{t}(S_{A}(t-r)-S_{A}(t-r_\delta))\Delta^\eps_{f,\bar
             f}(\theta_r\omega_2,\bar X(r_{\delta}))\,dr\bigg\|\cr
                      &&+\sup_{t\in[0,
T]}\bigg\|\int_{0}^{t}S_{A}(t-r_\delta)\Delta^\eps_{f,\bar f}(\theta_r\omega_2,\bar
             X(r_{\delta}))\,dr\bigg\|\cr
                      &=:& I_{611}+I_{612}.
                  \end{eqnarray*}

                  By the boundedness property of $f,\bar{f}$ and Lemmas
\ref{semi1} and
             \ref{semi2}, we have
                  \begin{eqnarray*}
                      I_{611}
                      &\le&\sup_{t\in[0,
T]}\int_{0}^{t}\|(S_{A}(t-r)-S_{A}(t-r_\delta))\|\|\Delta^\eps_{f,\bar
             f}(\theta_r\omega_2,\bar X(r_{\delta}))\|\,dr\cr
                      &\le&C \delta^\sigma\sup_{t\in[0, T]}
             \int_{0}^{t}\|(-A)^{\sigma}S_{A}(t-r)\|\,dr
                      \le C \delta^\sigma.
                  \end{eqnarray*}

                  Then, one has
                  \begin{eqnarray*}
                      I_{612}
                      &\le &
                      \sup_{t\in[0,
T]}\bigg\|\int_{t_{\delta}}^{t}S_{A}(t-r_\delta)\Delta^\eps_{f,\bar
             f}(\theta_r\omega_2,\bar
X(r_{\delta}))\,dr\bigg\|\cr
                      && +
                      \sup_{t\in[0, T]}\bigg\|\sum_{k=0}^{\lfloor t /
\delta\rfloor
-1}\int_{k\delta}^{(k+1)\delta}S_{A}(t-k\delta)\Delta^\eps_{f,\bar
             f}(\theta_r\omega_2,\bar
X(r_{\delta}))\,dr\bigg\|\cr
                      &\le  & \sup_{t\in[0,
T]}\int_{t_{\delta}}^{t}\|S_{A}(t-r_\delta)\|\|\Delta^\eps_{f,\bar
             f}(\theta_r\omega_2,\bar X(r_{\delta}))\|\,dr\cr
                      &&+\sup_{t\in[0, T]}\sum_{k=0}^{\lfloor t /
\delta\rfloor
-1}\|(-A)^{\sigma}S_{A}(t-k\delta)\|\bigg\|\int_{k\delta}^{(k+1)\delta}(-A)^{-\sigma}\Delta^\eps_{f,\bar
             f}(\theta_r\omega_2,\bar X(k\delta))\,dr\bigg\|\cr
                      &\le&  C \delta+C\delta^{-1} \max_{0\le k\le
\lfloor T /
\delta\rfloor-1}\bigg\|\int_{k\delta}^{(k+1)\delta}(-A)^{-\sigma}\Delta^\eps_{f,\bar
             f}(\theta_r\omega_2,\bar X(k\delta))\,dr\bigg\|\cr
               &\le&  C \delta+C
             \delta^{-1} \max_{0\le k\le \lfloor T /
\delta\rfloor-1}\bigg\|\int_{k\delta}^{(k+1)\delta}(-A)^{-\sigma}\Delta^{1}_{f,\bar
             f}(\theta_{\frac{r}{\eps}\omega_2,}\bar
             X(k\delta))\,dr\bigg\|\cr
             &\le&  C \delta+C
             \delta^{-1} \max_{0\le k\le \lfloor T /
\delta\rfloor-1}\bigg\|\eps\int_{\frac{k\delta}{\eps}}^{\frac{(k+1)\delta}{\eps}}(-A)^{-\sigma}\Delta^{1}_{f,\bar
             f}(\theta_r\omega_2,\bar X(k\delta))\,dr\bigg\|\cr
                      &\leq&C \delta+C\delta^{-1}\max_{0\le
k\le
             \lfloor T /
\delta\rfloor-1}\frac{T\eps}{\delta(k+1)}\bigg\|\int_{0}^{\frac{(k+1)\delta}{\eps}}(-A)^{-\sigma}\Delta^1_{f,\bar
             f}(\theta_r\omega_2,\bar X(k
\delta))\,dr\bigg\|\cr
                      &&+C\delta^{-1}\max_{0\le k\le
\lfloor T /
             \delta\rfloor-1}\frac{T\eps}{\delta
k}\bigg\|\int_{0}^{\frac{k\delta}{\eps}}(-A)^{-\sigma}\Delta^1_{f,\bar
             f}(\theta_r\omega_2,\bar X(k \delta))\,dr\bigg\|
                  \end{eqnarray*}
             where we use the fact that
             \begin{equation}\label{eq12-}
             \sup_{t\in [0,T]}\sum_{k=0}^{\lfloor t /
             \delta\rfloor -1}\|(-A)^{\sigma}S_{A}(t-k\delta)\|\le C
\delta^{-1}, \,
             \sigma\in(0,1),
             \end{equation}
             see Page 28 in Pei et al. \cite{pei2020pathwise}.
             We have
             for  $\eps \rightarrow 0 $, $\frac{(k+1)\delta}{\eps}
\rightarrow
             +\infty$ for any $k, 1 \le k\le \lfloor T /
\delta\rfloor-1$. In
             addition we take the maximum over finitely many elements
determined by
             the fixed number $\delta$ given  and $T$. Following
\cref{l3}, we have
             for every element under the maximum
                      \begin{eqnarray}\label{ergo}
\frac{\eps}{\delta(k+1)}\bigg\|\int_{0}^{\frac{(k+1)\delta}{\eps}}(-A)^{-\sigma}\Delta^1_{f,\bar
             f}(\theta_r\omega_2,\bar X(k
\delta))\,dr\bigg\|\rightarrow
             0, \quad {\rm as} \quad \eps
             \rightarrow 0
                  \end{eqnarray}
almost surely.   We note that by \cref{l3}
we can consider as an argument the random
             variable $\bar X(k\delta)$ inside the integrand of the last
integral because the exceptional set for the convergence is
             independent of  $x$.
                  Thus, we have for $\eps$ sufficiently small depending on
$(\omega_1,\omega_2)$ almost surely and
             the $\delta $ given
                  \begin{eqnarray}\label{i61}
                      I_{61}     &\leq& C\delta^\sigma.
                  \end{eqnarray}

                  Next, we turn to estimate $I_{62}$:
                  \begin{eqnarray*}
                      I_{62}& \le & C \sup_{0< s<t\le T}
\frac{\big\|\int_{s}^{t}(S_{A}(t-r)-S_{A}(t-r_{\delta}))\Delta^\eps_{f,\bar
             f}(\theta_r\omega_2,\bar X(r_{\delta}))\,dr\big\|}{(t-s)^{\gamma}}\cr
                      &&+C  \sup_{0< s<t\le T}
\frac{\big\|\int_{s}^{t}S_{A}(t-r_{\delta})\Delta^\eps_{f,\bar f}(\theta_r\omega_2,\bar
             X(r_{\delta}))\,dr\big\|}{(t-s)^{\gamma}}
                      =: I_{621}+I_{622}.
                  \end{eqnarray*}

                  For above estimate,   let us begin with $I_{621}$.
            Taking $\sigma^\prime<1-\gamma$ into account,
by the boundedness property of $f,\bar{f}$, we have
                  \begin{eqnarray*}
                      I_{621}& \le & C \sup_{0< s<t\le
T}\bigg\{(t-s)^{-\gamma}\int_{s}^{t}\|(S_{A}(t-r)-S_{A}(t-r_\delta))\|\|\Delta^\eps_{f,\bar
             f}(\theta_r\omega_2,\bar X(r_{\delta}))\|\,dr\bigg\}\cr
                      &\le & C\sup_{0< s<t\le
T}\bigg\{(t-s)^{-\gamma}
             \delta^{\sigma^\prime}
\int_{s}^{t}(t-r)^{-\sigma^\prime}\,dr\bigg\}\cr
                      &\le & C\sup_{0< s<t\le
             T}\bigg\{(t-s)^{-\gamma+1-\sigma^\prime}
\delta^{\sigma^\prime} \bigg\}
                      \le  C \delta^{\sigma^\prime}.
                  \end{eqnarray*}

                  Now, we deal with $I_{622}$. Consider $\ell_t:=\{s<
t:t < (\lfloor
             \frac{s}{\delta} \rfloor +2)\delta\}$,
$\ell^c_t=\{s<t:t\geq (\lfloor
             \frac{s}{\delta} \rfloor +2)\delta\}$. Note that we have
for $s\in
             \ell_t$ that $t-s<2\delta$ and  for $s\in \ell_t^c$ that
$t-s\ge
             \delta$.
                  \begin{eqnarray*}
                      I_{622}
                      &\le &C \sup_{0< s<t\le
T}\bigg\{\frac{\|\int_{s}^{t}S_{A}(t-r_{\delta})\Delta^\eps_{f,\bar
             f}(\theta_r\omega_2,\bar X(r_{\delta}))\,dr\|}{(t-s)^{\gamma}}
             \mathbf{1}_{\ell_t}(s)\bigg\}\cr
                      &&+C\sup_{0< s<t\le
T}\bigg\{\frac{\|\int_{s}^{(\lfloor
{s}{\delta^{-1}}\rfloor+1)\delta}S_{A}(t-r_{\delta})\Delta^\eps_{f,\bar
             f}(\theta_r\omega_2,\bar X(r_{\delta}))\,dr\|}{(t-s)^{\gamma}}
             \mathbf{1}_{\ell_t^c}(s)\bigg\}\cr
                      &&+C\sup_{0< s<t\le T}\bigg\{\frac{\|\int^{t}_{
\lfloor
             {t}{\delta^{-1}}\rfloor
\delta}S_{A}(t-r_{\delta})\Delta^\eps_{f,\bar
             f}(\theta_r\omega_2,\bar X(r_{\delta}))\,dr\|}{(t-s)^{\gamma}}
             \mathbf{1}_{\ell_t^c}(s)\bigg\}\cr
                      &&+C\sup_{0< s<t\le T}\bigg\{\frac{\|\int_{(\lfloor
             {s}{\delta^{-1}}\rfloor+1)\delta}^{\lfloor
{t}{\delta^{-1}}\rfloor
             \delta}S_{A}(t-r_{\delta})\Delta^\eps_{f,\bar f}(\theta_r\omega_2,\bar
             X(r_{\delta}))\,dr\|}{(t-s)^{\gamma}}
\mathbf{1}_{\ell^c_t}(s)\bigg\}.
                  \end{eqnarray*}
                  The first three expressions on the right hand side of
the last
             inequality can be estimated by $C\delta^{1-\gamma}$. Thus,
we have
                  \begin{eqnarray*}
                  I_{622}
                  &\le & C\delta^{1-\gamma}\cr
             &&+ C\sup_{0< s<t\le
             T}\bigg\{\frac{\|\sum_{k=\lfloor
{s}{\delta^{-1}}\rfloor+1}^{\lfloor
             {t}{\delta^{-1}}\rfloor}
S_{A}(t-r_{\delta})\int_{k\delta}^{(k+1)
             \delta}\Delta^\eps_{f,\bar
f}(\theta_r\omega_2,\bar
             X(r_{\delta}))\,dr\|}{(t-s)^{\gamma}}
             \mathbf{1}_{\ell^c_t}(s)\bigg\}\cr
                  &\le & C\delta^{1-\gamma}+ C
             \delta^{-1}\max_{0\le k\le \lfloor T /
\delta\rfloor-1}\bigg\|\int_{k\delta}^{(k+1)\delta}(-A)^{-\sigma}\Delta^\eps_{f,\bar
             f}(\theta_r\omega_2,\bar X(k\delta))\,dr\bigg\|
             \end{eqnarray*}
           where we apply (\ref{eq12-}).
             Using the ergodic theorem again, the remaining
             term on the right hand side can be estimated similar to
$I_{612}$, see
             (\ref{ergo}). We have
                  \begin{eqnarray*}
                      I_{62}
                      \le  C\delta ^{\tilde \sigma}
                  \end{eqnarray*}
                  for sufficiently small $\eps>0$.

                  The next term is
                  \begin{eqnarray*}
                      I_{63}
                      &\le & C \sup_{0< s<t\le T}\frac{\int_{0}^s
\|(S_{A}(t-s)-{\rm
id})(S_{A}(s-r)-S_{A}(s-r_{\delta}))\|\|\Delta^\eps_{f,\bar f}(\theta_r\omega_2,\bar
             X(r_{\delta}))\| \,dr}{(t-s)^{\gamma}}\cr
                      &&+C \sup_{0< s<t\le T}\frac{\big\|\int_{0}^s
(S_{A}(t-s)-{\rm
             id}) S_{A}(s-r_{\delta})\Delta^\eps_{f,\bar f}(\theta_r\omega_2,\bar
             X(r_{\delta}))\,dr\big\|}{(t-s)^{\gamma }}
                      =:I_{631}+I_{632}.
                  \end{eqnarray*}

             For $I_{631}$, taking $\gamma<\sigma<1-\sigma'' $, by the
             boundedness property of $f,\bar{f}$ and $r-r_{\delta}\leq
\delta$, we have
               \begin{eqnarray*}
                      I_{631}
               &\le & C \sup_{0< s<t\le T}\int_{0}^s
(t-s)^{\sigma-\gamma}\|(-A)^\sigma(S_{A}(s-r)-S_{A}(s-r_{\delta}))\| \,dr\cr
                      &\le & C \delta^{\sigma''} \sup_{0< s<t\le
T}\int_{0}^s
             (t-s)^{\sigma-\gamma}\|
(-A)^{\sigma+\sigma''}S_{A}(s-r)\|\,dr\cr
                      &\le & C \delta^{\sigma''} \sup_{0< s<t\le
T}\int_{0}^s
             (t-s)^{\sigma-\gamma}(s-r)^{-\sigma-\sigma''}\,dr
                      \le  C \delta^{\sigma''}
                  \end{eqnarray*}
             and for $I_{632}$ and $\gamma<\sigma<1-\sigma'' $,
                  \begin{eqnarray*}
                      I_{632}
                      &\le&
                      C \sup_{0< s<t\le T}\frac{\big\|\int_{\lfloor
\frac{s}{\delta}
             \rfloor \delta}^s (-A)^{\sigma} S_{A}(s-\lfloor
\frac{r}{\delta} \rfloor
             \delta) \Delta^\eps_{f,\bar f}(\theta_r\omega_2,\bar
             X(r_{\delta}))\,dr\big\|}{(t-s)^{\gamma-\sigma}}\cr
                      &&+ C \sup_{0< s<t\le
T}\frac{\big\|\sum_{k=0}^{\lfloor
             \frac{s}{\delta} \rfloor -1}\int_{k\delta}^{(k+1)\delta}
(-A)^{\sigma}
             S_{A}(s-k\delta)\Delta^\eps_{f,\bar
f}(\theta_r\omega_2,\bar X(k
             \delta))\,dr\big\|}{(t-s)^{\gamma-\sigma}}\cr
                      &\le & C \sup_{0< s<t\le
             T}\bigg\{(t-s)^{-\gamma+\sigma}\int_{\lfloor
\frac{s}{\delta} \rfloor
             \delta}^s (s-\lfloor \frac{r}{\delta} \rfloor
\delta)^{-\sigma}dr\bigg\}\cr
                      &&+ C \sup_{0< s<t\le
T}\frac{\big\|\sum_{k=0}^{\lfloor
             \frac{s}{\delta} \rfloor
             -1}(-A)^{\sigma+ \sigma'' }S_{A}(s-k\delta
             )\int_{k\delta}^{(k+1)\delta}
             (-A)^{- \sigma'' }\Delta^\eps_{f,\bar
f}(\theta_r\omega_2,\bar
             X(k \delta))\,dr\big\|}{(t-s)^{\gamma-\sigma}}\cr
                      &\le&  C \delta^{1-\sigma}+C
             \delta^{-1}\max_{0\le k\le \lfloor T /
\delta\rfloor-1}\bigg\|\int_{k\delta}^{(k+1)\delta}(-A)^{- \sigma'' }\Delta^\eps_{f,\bar
             f}(\theta_r\omega_2,\bar X(k \delta))\,dr\bigg\|
                  \end{eqnarray*}
                  where we apply (\ref{eq12-}) again.
             Using ergodic theorem and the estimate similar to
(\ref{ergo}) again
             and taking $\eps$ small enough, we have
                $$
                      I_{63}
                      \le C \delta^{\tilde\sigma}.$$

             To deal with $I_{1}$, by replacing $\Delta^\eps_{f,\bar
             f}(\theta_r\omega_2, \bar X(r_{\delta}))$ in $I_6$ by
$$(f(X^\eps(r_{\delta}),\hat
Y^\eps(r))-f(X^\eps(r_{\delta}),Y_{F}^\eps(\theta_{r}\omega_2,
             X^\eps(r_\delta))))$$ we can apply the techniques to
estimate $I_{6}$.
             But instead the ergodic theory argument we apply
\cref{y1-y2} so that
                  \[
                  I_1\le C \delta^{\tilde\sigma}+
             C\delta^{-1}\big(1+\|X_0\|+\|Y_0\|+o(\eps^{-1})
                  \big)\eps \le C\delta^{\tilde\sigma}
                  \]
                  for $\eps<\eps_0$ and $\delta$ given we have that
$\delta^{-1}
             o(\eps^{-1})\eps< C\delta^\gamma$.

             The estimates for $I_8$ and $I_9$ follow by using the same
techniques in Appendix and \cite[Lemma 9]{chen2013pathwise}.
Thus, we have
             \begin{eqnarray}\label{i56}
             I_8+I_9
             &\le& C \ltn\omega_{1}\rtn_{\beta}(1+
             \|X^\eps\|_{\gamma,\rho,\sim}+\|\hat
             X^\eps\|_{\gamma,\rho,\sim})K(\rho)\|X^\eps-\hat
             X^\eps\|_{\gamma,\rho,\sim}\cr
             &&+C \ltn\omega_{1}\rtn_{\beta}(1+ \|\bar
             X\|_{\gamma,\rho,\sim}+\|\hat
X^\eps\|_{\gamma,\rho,\sim})K(\rho)\|\hat
             X^\eps-\bar X\|_{\gamma,\rho,\sim}
             \end{eqnarray}
             where $\lim\limits_{\rho \rightarrow \infty}K(\rho)=0$.

             For $\|X^\eps-\hat{X}^\eps\|_{\gamma,\rho,\sim}$, by
(\ref{slowpathint})
             and (\ref{xhat}), it is easy to see
             \begin{eqnarray}\label{x-xhatrhp}
             \|X^\eps-\hat{X}^\eps\|_{\gamma,\rho,\sim}
             &\le & \bigg\|\int_{0}^{\cdot}S_{A}(\cdot-r)(f(
             X^\eps(r),Y^\eps(r))-f(X^\eps(r_\delta),
             Y^\eps(r)))\,dr\bigg\|_{\gamma,\rho,\sim}\cr
&&+\bigg\|\int_{0}^{\cdot}S_{A}(\cdot-r)(f(X^\eps(r_\delta),
             Y^\eps(r))-f( X^\eps(r_\delta),\hat
             Y^\eps(r)))\,dr\bigg\|_{\gamma,\rho,\sim}\cr
             &=:&J_1+J_2.
             \end{eqnarray}

By the same techniques for $I_{2}$, it is easy to see
the $J_1$  is less than $C\delta^{\gamma}$. For the second term of the
right side of
(\ref{x-xhatrhp}), we can apply the similar techniques used in the estimate of $I_{6}$, replacing $\Delta^\eps_{f,\bar f}(X^\eps (r_{\delta}))$ by
$$\Delta^{Y^\eps,\hat Y^\eps}_{f}(X^\eps (r_{\delta})):=f(X^\eps(r_\delta), Y^\eps(r))-f( X^\eps(r_\delta),\hat
Y^\eps(r)).$$

Thus, we have
   \begin{eqnarray*}
         J_{2}
    &\le&  \sup_{t\in[0,
T]}e^{-\rho
t}\int_{0}^{t}\|(S_{A}(t-r)-S_{A}(t-r_\delta))\|\|\Delta^{Y^\eps,\hat
Y^\eps}_{f}(X^\eps (r_{\delta}))\|\,dr \cr
         &&+ \sup_{t\in[0,
T]} e^{-\rho
t}\bigg\|\int_{0}^{t}S_{A}(t-r_\delta)\Delta^{Y^\eps,\hat Y^\eps}_{f}(X^\eps (r_{\delta}))\,dr\bigg\|\cr
&&+  \sup_{0< s<t\le T} s^{\gamma}e^{-\rho
t}
\frac{\int_{s}^{t}\|(S_{A}(t-r)-S_{A}(t-r_{\delta}))\|\|\Delta^{Y^\eps,\hat
Y^\eps}_{f}(X^\eps (r_{\delta}))\|\,dr}{(t-s)^{\gamma}}\cr
   &&+ \sup_{0<s<t\le T \atop s<2\delta} s^{\gamma}e^{-\rho
t}
\frac{\big\|\int_{s}^{t}S_{A}(t-r_{\delta})\Delta^{Y^\eps,\hat
Y^\eps}_{f}(X^\eps (r_{\delta}))\,dr\big\|}{(t-s)^{\gamma}}\cr
&&+ \sup_{2\delta \le s<t\le T}s^{\gamma}e^{-\rho
t}
\frac{\big\|\int_{s}^{t}S_{A}(t-r_{\delta})\Delta^{Y^\eps,\hat
Y^\eps}_{f}(X^\eps (r_{\delta}))\,dr\big\|}{(t-s)^{\gamma}}\cr
   &&+ \sup_{0< s<t\le T} s^{\gamma}e^{-\rho
t}\frac{\int_{0}^s \|(S_{A}(t-s)-{\rm
id})(S_{A}(s-r)-S_{A}(s-r_{\delta}))\|\|\Delta^{Y^\eps,\hat Y^\eps}_{f}(X^\eps (r_{\delta}))\| \,dr}{(t-s)^{\gamma}}\cr
         &&+\sup_{0<s<t\le T \atop s<2\delta} s^{\gamma}e^{-\rho
t}\frac{\big\|\int_{0}^s (S_{A}(t-s)-{\rm
id}) S_{A}(s-r_{\delta})\Delta^{Y^\eps,\hat Y^\eps}_{f}(X^\eps (r_{\delta}))\,dr\big\|}{(t-s)^{\gamma }}\cr
         &&+ \sup_{2\delta \le s<t\le T} s^{\gamma}e^{-\rho
t}\frac{\big\|\int_{0}^{2\delta} (S_{A}(t-s)-{\rm
id}) S_{A}(s-r_{\delta})\Delta^{Y^\eps,\hat Y^\eps}_{f}(X^\eps (r_{\delta}))\,dr\big\|}{(t-s)^{\gamma }}\cr
         &&+ \sup_{2\delta \le s<t\le T} s^{\gamma}e^{-\rho
t}\frac{\big\|\int_{2\delta}^s (S_{A}(t-s)-{\rm
id}) S_{A}(s-r_{\delta})\Delta^{Y^\eps,\hat Y^\eps}_{f}(X^\eps (r_{\delta}))\,dr\big\|}{(t-s)^{\gamma }}
=\sum_{i=1}^{9} J_{2i}.
     \end{eqnarray*}

For the terms $J_{21},J_{23},J_{24},J_{26},J_{27},J_{28}$, it is easy to know
$$J_{21}+J_{23}+J_{24}+J_{26}+J_{27}+J_{28}\le C\delta^{\tilde\sigma}.$$

Then, using same method
with the estimates of $I_{612},I_{622}$ and $I_{632}$, we apply \cref{y-yhat}
instead the ergodic theory argument. By the Lipschitz continuity and
boundedness property of $f$ and \cref{y-yhat}, using same method
with the estimate of $I_{21}$,  we have
\begin{eqnarray*}
{J}_{22} &\leq&
         \sup_{t\in[0, T]}e^{-\rho
t}\int_{0}^{\delta}\|S_{A}(t-r_{\delta})\Delta^{Y^\eps,\hat
Y^\eps}_{f}(X^\eps(r_{\delta}))\|\,dr\cr
&&+
         \sup_{t\in[0, T]}e^{-\rho
t}\int_{t_{\delta}}^{t}\|S_{A}(t-r_{\delta})\Delta^{Y^\eps,\hat
Y^\eps}_{f}(X^\eps(r_{\delta}))\|\,dr\cr
&&+
         \sup_{t\in[0, T]}e^{-\rho
t}\bigg\|\sum_{k=1}^{\lfloor t / \delta\rfloor
-1}\int_{k\delta}^{(k+1)\delta}S_{A}(t-k\delta)\Delta^{Y^\eps,\hat
Y^\eps}_{f}(X^\eps(r_{\delta}))\,dr\bigg\|\cr
&\le&
 C\delta +  C \sup_{t\in[0, T]}\sum_{k=1}^{\lfloor t / \delta\rfloor
-1}e^{-\rho
t}\int_{k\delta}^{(k+1)\delta}\|Y^\eps(r)-\hat
Y^\eps(r)\|\,dr\cr
&\leq& C\delta + C \sup_{t\in[0, T]}\sum_{k=1}^{\lfloor t / \delta\rfloor
-1}  \delta^{1+\gamma}(1+(k\delta)^{-\gamma}) \cr
&\leq& C\delta^\gamma + C \sup_{t\in[0, T]}\sum_{k=1}^{\lfloor t / \delta\rfloor
-1}  \delta \int_{k-1}^{k}k^{-\gamma}dv \leq C \delta^\gamma.
     \end{eqnarray*}

To proceed, for $J_{25}$ and $J_{29}$, by the Lipschitz continuity and
boundedness property of $f$ and \cref{y-yhat}, one has
\begin{eqnarray*}
 {J}_{25} &\leq&\sup_{2\delta \le s<t\le T}\bigg\{s^{\gamma}e^{-\rho
t}
\frac{\int_{s}^{t}\|S_{A}(t-r_{\delta})\|\|\Delta^{Y^\eps,\hat
Y^\eps}_{f}(X^\eps (r_{\delta}))\|\,dr}{(t-s)^{\gamma}}\mathbf{1}_{\ell_t}(s)\bigg\}\cr
&&+\sup_{2\delta \le s<t\le T}\bigg\{s^{\gamma}e^{-\rho
t}
\frac{\int_{s}^{s_{\delta}+\delta}\|S_{A}(t-r_{\delta})\|\|\Delta^{Y^\eps,\hat
Y^\eps}_{f}(X^\eps (r_{\delta}))\|\,dr}{(t-s)^{\gamma}}\mathbf{1}_{\ell_t^c}(s)\bigg\}\cr
&&+\sup_{2\delta \le s<t\le T}\bigg\{s^{\gamma}e^{-\rho
t}
\frac{\int_{t_\delta}^t\|S_{A}(t-r_{\delta})\|\|\Delta^{Y^\eps,\hat
Y^\eps}_{f}(X^\eps (r_{\delta}))\|\,dr}{(t-s)^{\gamma}}\mathbf{1}_{\ell_t^c}(s)\bigg\}\cr
&&+   C\sup_{2\delta \le s<t\le T}\bigg\{s^{\gamma}e^{-\rho
t}
\frac{\sum_{k=\lfloor {s}{\delta^{-1}}\rfloor+1}^{\lfloor
{t}{\delta^{-1}}\rfloor-1} \int_{k\delta}^{(k+1)\delta}\|Y^\eps(r)-\hat
Y^\eps(r)\|\,dr}{(t-s)^{\gamma}}\mathbf{1}_{\ell_t^c}(s)\bigg\}\cr
&\leq& C \delta^{1-\gamma}+   C \sup_{2\delta \le s<t\le T}
\bigg\{
\frac{\sum_{k=\lfloor {s}{\delta^{-1}}\rfloor+1}^{\lfloor
{t}{\delta^{-1}}\rfloor-1} s^{\gamma} \delta^{1+\gamma}  (1+(k\delta)^{-\gamma})}{(t-s)^{\gamma}}\mathbf{1}_{\ell_t^c}(s)\bigg\}\leq C \delta^{\tilde\sigma}
     \end{eqnarray*}
where the first three terms are less than $C \delta^{1-\gamma}$ by the boundedness property of $f$ and
  \begin{eqnarray*} \sum_{k=\lfloor {s}{\delta^{-1}}\rfloor+1}^{\lfloor
{t}{\delta^{-1}}\rfloor-1} s^{\gamma}(1+(k\delta)^{-\gamma}) &\leq& \sum_{k=\lfloor {s}{\delta^{-1}}\rfloor+1}^{\lfloor
{t}{\delta^{-1}}\rfloor-1} (s^{\gamma}+\big(\frac{s}{k\delta}\big)^{\gamma}) \leq  \sum_{k=\lfloor {s}{\delta^{-1}}\rfloor+1}^{\lfloor
{t}{\delta^{-1}}\rfloor-1} (T^{\gamma}+1) \cr
&\leq&
C\big( \lfloor
{t}{\delta^{-1}} \rfloor-1-(\lfloor {s}{\delta^{-1}}\rfloor+1)+1\big) \leq C \delta^{-1}(t-s)
	  \end{eqnarray*}
 has been used in the last term. Then, we have
\begin{eqnarray*}
 {J}_{29} &\leq& \sup_{2\delta \le s<t\le T} s^{\gamma}e^{-\rho
t}\frac{\big\|\int_{s_{\delta}}^s (S_{A}(t-s)-{\rm
id}) S_{A}(s-r_{\delta})\Delta^{Y^\eps,\hat Y^\eps}_{f}(X^\eps (r_{\delta}))\,dr\big\|}{(t-s)^{\gamma }} \cr
&&+ \sup_{2\delta \le s<t\le T} s^{\gamma}e^{-\rho
t}\frac{\big\|\int_{2 \delta}^{s_{\delta}} (S_{A}(t-s)-{\rm
id}) S_{A}(s-r_{\delta})\Delta^{Y^\eps,\hat Y^\eps}_{f}(X^\eps (r_{\delta}))\,dr\big\|}{(t-s)^{\gamma }} \cr
&\leq& C \delta^{1-\gamma}+ C \sup_{2\delta \le s<t\le T} s^{\gamma}e^{-\rho
t}\int_{2\delta}^{s_\delta} (s-r_\delta)^{-\gamma} \|Y^\eps(r)-\hat
Y^\eps(r)\|\,dr\cr
&\leq& C \delta^{1-\gamma}+
C \sup_{2\delta \le s<t\le T}  \sum_{k=2}^{\lfloor
{s}{\delta^{-1}}\rfloor-1}s^{\gamma}(s-k\delta)^{-\gamma}e^{-\rho
t} \int_{k\delta}^{(k+1)\delta}
\|Y^\eps(r)-\hat
Y^\eps(r)\| \,dr\cr
&\leq& C \delta^{1-\gamma}+
C \sup_{2\delta \le s<t\le T}  \sum_{k=2}^{\lfloor
{s}{\delta^{-1}}\rfloor-1}s^{\gamma}(s-k\delta)^{-\gamma}  \delta^{1+\gamma}(1+(k\delta)^{-\gamma})\cr
&\leq& C \delta^{1-\gamma}+C \delta^{\gamma}+
C\sup_{2\delta \le s<t\le T}  \sum_{k=2}^{\lfloor
{s}{\delta^{-1}}\rfloor-1}\int_{k\delta}^{(k+1)\delta}s^{\gamma} \delta^{\gamma}(s-k\delta)^{-\gamma}(k\delta)^{-\gamma}\, dr\cr
&\leq&  C \delta^{1-\gamma}+C \delta^{\gamma}+
C\delta^{\gamma} \sup_{2\delta \le s<t\le T} \int_{2\delta}^{s_\delta}s^{\gamma}(s-r_\delta)^{-\gamma}r^{-\gamma}_{\delta}\, dr\cr
&\leq& C \delta^{1-\gamma}+C \delta^{\gamma}+
C\delta^{\gamma} \sup_{2\delta \le s<t\le T} \int_{2\delta}^{s}s^{\gamma}(s-r)^{-\gamma}r^{-\gamma}\frac{r^{-\gamma}_{\delta}}{r^{-\gamma}}\,dr\cr
&\leq&C \delta^{1-\gamma}+C \delta^{\gamma}+
C\delta^{\gamma} \sup_{2\delta \le s<t\le T} \int_{0}^{s}s^{\gamma}(s-r)^{-\gamma}r^{-\gamma}\, dr\leq C \delta^{\tilde\sigma}.
  \end{eqnarray*}

             Thus, we have
             \begin{eqnarray}\label{x-xhateqa}
             \|X^\eps-\hat{X}^\eps\|_{\gamma,\rho,\sim}
             \le C\delta^{\tilde\sigma}.
             \end{eqnarray}
             Then, by (\ref{x-xhateqa}) and taking $\rho$ large enough
and $\delta$
             small enough, we have
             \begin{eqnarray}
             I_8+I_9
             \leq   C \delta^{\tilde\sigma}+\frac13 \|\hat X^\eps-\bar
             X\|_{\gamma,\rho,\sim}.
             \end{eqnarray}

 To deal with $I_3$, we can apply the similar techniques used in the estimate of $I_{4}$. By the Lipschitz continuity
of $f$, (A2), Lemma \ref{xbound}, Lemma \ref{y-yhat} and (\ref{x-xhateqa}), it is easy to see
             \begin{eqnarray*}
             I_3 \le C    \|X^\eps-\hat{X}^\eps\|_{\gamma,\rho,\sim}
           \le C\delta^{\tilde\sigma}.
                  \end{eqnarray*}
                  Thus, putting above estimates together, we have for
sufficiently
             small $\eps>0$
             \begin{eqnarray}\label{i6}
             \|\hat X^\eps-\bar X\|_{\gamma,\rho,\sim}\le C
             \delta^{\tilde\sigma}+\frac23 \|\hat X^\eps-\bar
X\|_{\gamma,\rho,\sim}
             \end{eqnarray}
             so that \eqref{eq9} holds.
             \end{proof}

             \begin{lemma}\label{x-xhatlem}
                  Let {\rm (A1)-(A4)} and {\rm (\ref{eq3})} hold. For any $X_0\in V$,  as
$\eps\rightarrow
             0$ the solution of {\rm (\ref{xhat})} converges to $X^\eps$
which
             solves {\rm (\ref{slowpathint})}. That is, we have almost
surely
                  \[
\lim\limits_{\epsilon\rightarrow0}\|X^\eps-\hat{X}^\eps\|_{\gamma,\sim}=0\]
    where this norm is considered with respect to a fixed
interval
             $[0,T]$.
            \end{lemma}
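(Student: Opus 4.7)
The plan is to reuse the calculation that already appears inside the proof of Lemma~\ref{bar0}, where the estimate $\|X^\eps-\hat X^\eps\|_{\gamma,\rho,\sim}\le C\delta^{\tilde\sigma}$ was derived as an intermediate step (see~\eqref{x-xhateqa}). Since the norms $\|\cdot\|_{\gamma,\rho,\sim}$ and $\|\cdot\|_{\gamma,\sim}$ are equivalent on a fixed interval $[0,T]$, it suffices to control $\|X^\eps-\hat X^\eps\|_{\gamma,\rho,\sim}$ and then choose $\delta=\delta(\mu)$ and subsequently $\eps=\eps_0(\delta)$ small enough.

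First I would subtract \eqref{xhat} from \eqref{slowpathint} and decompose into two contributions exactly as in \eqref{x-xhatrhp}:
\begin{align*}
X^\eps(t)-\hat X^\eps(t)=&\int_0^t S_A(t-r)\big(f(X^\eps(r),Y^\eps(r))-f(X^\eps(r_\delta),Y^\eps(r))\big)\,dr\\
&+\int_0^t S_A(t-r)\big(f(X^\eps(r_\delta),Y^\eps(r))-f(X^\eps(r_\delta),\hat Y^\eps(r))\big)\,dr
=:J_1(t)+J_2(t).
\end{align*}
For $J_1$, the Lipschitz continuity of $f$ in the first variable together with the $\gamma$-Hölder regularity of $X^\eps$ (Lemma~\ref{xbound}) produces a difference $\|X^\eps(r)-X^\eps(r_\delta)\|\le (r-r_\delta)^\gamma r_\delta^{-\gamma}\cdot r_\delta^\gamma\ltn X^\eps\rtn_{\gamma,\rho,\sim}$. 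Splitting the $\|\cdot\|_{\gamma,\rho,\sim}$-norm of $J_1$ into supremum, diagonal Hölder, and off-diagonal Hölder parts as was done for $I_2$ in Lemma~\ref{bar0} gives $\|J_1\|_{\gamma,\rho,\sim}\le C\delta^{\tilde\sigma}$, uniformly in $\eps$.

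For $J_2$, I would repeat the nine-term decomposition carried out for $I_6$ (terms $J_{2i}$, $i=1,\ldots,9$ in the proof of Lemma~\ref{bar0}), with $\Delta^\eps_{f,\bar f}(\theta_r\omega_2,\bar X(r_\delta))$ replaced by $f(X^\eps(r_\delta),Y^\eps(r))-f(X^\eps(r_\delta),\hat Y^\eps(r))$. The crucial replacement is that the ergodic-theorem argument used in Lemma~\ref{bar0} (which produced the small factors on blocks $[k\delta,(k+1)\delta]$) is now replaced by Lemma~\ref{y-yhat}: on each block one has $e^{-\rho t}\int_{k\delta}^{(k+1)\delta}\|Y^\eps(s)-\hat Y^\eps(s)\|\,ds\le C\delta^{1+\gamma}(1+(k\delta)^{-\gamma})$ provided $\eps$ is small enough relative to $\delta$. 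Summing in $k$ produces a $\sum_{k}\delta^{1+\gamma}(k\delta)^{-\gamma}\lesssim\delta$, and the terms without the block structure are immediately bounded by $C\delta^{\tilde\sigma}$ via the boundedness of $f$ and the semigroup estimates \eqref{semi1}--\eqref{semi2}. Altogether this yields $\|J_2\|_{\gamma,\rho,\sim}\le C\delta^{\tilde\sigma}$.

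The main obstacle is bookkeeping: one must keep track of where $\eps$ appears (only through the condition $\eps(1+\|X_0\|+\|Y_0\|+o(\eps^{-1}))\le\delta^{1+\gamma}$ needed in Lemma~\ref{y-yhat}) and where $\delta$ appears. Combining the two estimates gives $\|X^\eps-\hat X^\eps\|_{\gamma,\rho,\sim}\le C\delta^{\tilde\sigma}$. Given $\mu>0$, choose $\delta$ so that $C\delta^{\tilde\sigma}<\mu$; then choose $\eps_0=\eps_0(\delta,\omega_1,\omega_2)$ such that the smallness condition on $\eps$ holds for all $\eps<\eps_0$. Passing back to the original norm via equivalence on $[0,T]$ concludes the almost sure convergence $\|X^\eps-\hat X^\eps\|_{\gamma,\sim}\to0$ as $\eps\to0$.
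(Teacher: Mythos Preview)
Your proposal is correct and follows exactly the paper's approach: the paper's own proof of Lemma~\ref{x-xhatlem} is a two-line reference to the estimate~\eqref{x-xhateqa} (i.e., $\|X^\eps-\hat X^\eps\|_{\gamma,\rho,\sim}\le C\delta^{\tilde\sigma}$) already obtained inside the proof of Lemma~\ref{bar0}, together with norm equivalence and the $\delta$-then-$\eps$ argument of~\eqref{eq9}. You have simply written out those details explicitly.
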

    \begin{proof}
                      Note that the norm in (\ref{x-xhateqa}) is equivalent to the norm in the
conclusion. By (\ref{x-xhateqa}), similar to the argument of
(\ref{eq9}), the
             desired result will be obtained.
                \end{proof}

             To close this section, we note that \cref{bar0} and
\cref{x-xhatlem}
             yield \cref{mainthm}. This completes the proof of
\cref{mainthm}.

             \section*{Appendix: Several auxiliary technical lemmas}
             We recall the following technical lemma from
             \cite{chen2013pathwise,garrido2010random}.
             \begin{lemma} \label{inq-krho}{\rm \cite[Lemma
8]{chen2013pathwise}}
                  Let $a>-1,b>-1$ and $a+b\geq -1, d>0$ and $t\in[0,T].$ For
             $\rho>0$ we define
                  \begin{eqnarray*}
K(\rho):=\sup_{t\in[0,T]}t^d\int_{0}^{1}e^{-\rho t
             (1-v)}v^a(1-v)^b dv,
                  \end{eqnarray*}
                  then we have that $\lim\limits_{\rho\to \infty}K(\rho)=0$.
             \end{lemma}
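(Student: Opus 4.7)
The plan is to split the supremum over $t\in[0,T]$ into a small-$t$ region $[0,\eta]$ and a large-$t$ region $[\eta,T]$, where $\eta$ will be chosen as a function of the target accuracy. First I substitute $w=1-v$ to rewrite
\[
K(\rho)\;=\;\sup_{t\in[0,T]}\,t^d\int_0^1 e^{-\rho t w}(1-w)^a w^b\,dw,
\]
so that the decaying exponential sits next to the $w^b$ factor, whose singularity at $w=0$ governs the polynomial decay in $\rho$.

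On the small-$t$ region I drop the exponential via $e^{-\rho t w}\leq 1$, obtaining $t^d\int_0^1 e^{-\rho t w}(1-w)^a w^b\,dw \leq \eta^d B(a+1,b+1)$, where the beta function is finite because $a,b>-1$. Given $\eps>0$, I fix $\eta=\eta(\eps,a,b,d)$ small enough that this upper bound is at most $\eps/2$, independently of $\rho$.

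On $[\eta,T]$ I split the $w$-integral at $w=1/2$. For $w\in[0,1/2]$, $(1-w)^a$ is bounded by $C_a:=\max(1,2^{-a})$, so
\[
\int_0^{1/2} e^{-\rho t w}(1-w)^a w^b\,dw \;\leq\; C_a\int_0^\infty e^{-\rho t w} w^b\,dw \;=\; C_a\Gamma(b+1)(\rho t)^{-(b+1)}.
\]
For $w\in[1/2,1]$, $e^{-\rho t w}\leq e^{-\rho t/2}$ and $\int_{1/2}^1(1-w)^a w^b\,dw<\infty$ since $a>-1$, yielding the contribution $C e^{-\rho t/2}$. Multiplying by $t^d$ and taking the supremum over $[\eta,T]$, the exponential piece vanishes uniformly by $t\geq\eta$, while the polynomial piece becomes $C\,t^{d-b-1}\rho^{-(b+1)}$ and tends to zero because $t^{d-b-1}$ is bounded on $[\eta,T]$ (by $T^{d-b-1}$ if $d\geq b+1$, by $\eta^{d-b-1}$ otherwise). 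Choosing $\rho$ sufficiently large makes this contribution at most $\eps/2$, completing the argument.

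The main obstacle is obtaining uniformity as $t\downarrow 0$: when $d<b+1$ the sharper polynomial-decay estimate carries a divergent factor $t^{d-b-1}$, so a naive global bound on $[0,T]$ would fail. The two-region decomposition is designed precisely to bypass this, using the crude but uniformly bounded beta-function estimate on $[0,\eta]$ and the sharper $(\rho t)^{-(b+1)}$ decay on $[\eta,T]$. The hypothesis $a+b\geq -1$ is not needed for convergence of the integrals themselves, but ensures the estimates combine cleanly when this lemma is applied inside the $\|\cdot\|_{\gamma,\rho,\sim}$-type norms used earlier in the paper.
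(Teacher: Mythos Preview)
Your proof is correct. The paper itself does not supply a proof of this lemma: it is stated in the Appendix as a technical result recalled verbatim from \cite[Lemma~8]{chen2013pathwise} (see also \cite{garrido2010random}), so there is no in-paper argument to compare against.

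On the substance, your two-region decomposition in $t$ together with the split of the $w$-integral at $1/2$ is a clean and standard way to handle the competition between the factor $t^d$ and the decay $(\rho t)^{-(b+1)}$. You are also right that the hypothesis $a+b\geq -1$ plays no role in the convergence $K(\rho)\to 0$ itself; it enters only when the lemma is invoked to bound quantities of the form $t^{a+b+1}\int_0^1 e^{-\rho t(1-v)}v^a(1-v)^b\,dv$ (i.e.\ when $d=a+b+1$) in the estimates of the $\|\cdot\|_{\gamma,\rho,\sim}$-norms, where one needs $d>0$.
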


             \begin{lemma} \label{inq-rho}{\rm \cite[Lemma
14]{garrido2010unstable}}
                  For any non-negative a and $d$ such that $a+d< 1,$ and
for any
             $\rho \geq 1$, there exists a positive constant $c$ such that
                  \begin{eqnarray*}
                      \int_{0}^{t} e^{-\rho(t-r)}(t-r)^{-a} r^{-d} d r \le c
             \rho^{a+d-1}.
                  \end{eqnarray*}
             \end{lemma}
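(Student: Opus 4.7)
The plan is to rescale by $u=\rho(t-r)$ to factor out $\rho^{a+d-1}$ explicitly, reducing the claim to the uniform boundedness of a dimensionless integral. Applying this substitution and noting that $(t-r)^{-a}=\rho^a u^{-a}$, $r^{-d}=(t-u/\rho)^{-d}=\rho^d(\rho t-u)^{-d}$, and $dr=-du/\rho$, one obtains the clean identity
\[
\int_0^t e^{-\rho(t-r)}(t-r)^{-a}r^{-d}\,dr \;=\; \rho^{a+d-1}\,I(\rho t),\qquad I(T):=\int_0^T e^{-u}\,u^{-a}(T-u)^{-d}\,du.
\]
Hence it suffices to show that $\sup_{T>0} I(T)\le c(a,d)<\infty$.

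To bound $I(T)$ uniformly I would split into two regimes. In the small-time regime $T\le 1$ the exponential is dominated by $1$, so the integral is controlled by the Beta integral: substituting $u=Tw$ gives $I(T)\le T^{1-a-d}B(1-a,1-d)$, which is finite because $a,d<1$ and uniformly bounded on $T\le 1$ because $1-a-d>0$. In the large-time regime $T>1$ I would split at $T/2$: on $[0,T/2]$ the factor $(T-u)^{-d}\le(T/2)^{-d}\le 2^d$ is bounded (using $a,d\ge 0$ and $T/2\ge 1/2$), yielding the contribution $\le 2^d\Gamma(1-a)$; on $[T/2,T]$ I bound $u^{-a}\le 2^a$ and $e^{-u}\le e^{-T/2}$, then substitute $v=T-u$ to evaluate $\int_0^{T/2}v^{-d}\,dv=(T/2)^{1-d}/(1-d)$, so this piece is controlled by $\frac{2^a}{1-d}e^{-T/2}(T/2)^{1-d}$, which is uniformly bounded because the exponential decay crushes any polynomial growth.

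Combining the two regimes gives $I(T)\le c(a,d)$ for all $T>0$, and the identity above then delivers the claimed bound. The step requiring the most care is the split at $T/2$ for large $T$: one must check that the monotonicity estimates $(T/2)^{-a},(T/2)^{-d}\le 2^{\max(a,d)}$ remain valid (which they do, since $a,d\ge 0$ and $T>1$) and that $\sup_{T\ge 1}e^{-T/2}(T/2)^{1-d}<\infty$ (trivial by calculus). It is worth noting that the restriction $\rho\ge 1$ is not actually needed for the inequality; the rescaling identity above holds for every $\rho>0$, and the hypothesis $\rho\ge 1$ merely guarantees $\rho^{a+d-1}\le 1$, which is the regime used in the applications.
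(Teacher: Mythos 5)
Your argument is correct. Note that the paper does not prove this lemma at all: it is imported verbatim as Lemma 14 of \cite{garrido2010unstable}, so there is no in-paper proof to compare against. Your self-contained derivation is sound: the substitution $u=\rho(t-r)$ does produce the exact identity
\[
\int_0^t e^{-\rho(t-r)}(t-r)^{-a}r^{-d}\,dr=\rho^{a+d-1}\int_0^{\rho t}e^{-u}u^{-a}(\rho t-u)^{-d}\,du,
\]
the Beta-function bound handles $T=\rho t\le 1$ (using $1-a-d>0$), and the split at $T/2$ handles $T>1$, with both singularities $u^{-a}$ and $(T-u)^{-d}$ integrable since $a,d<1$. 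Your closing observation is also accurate: the hypothesis $\rho\ge 1$ is not needed for the inequality itself, only for the monotone decay $\rho^{a+d-1}\le 1$ exploited in the applications (e.g.\ in the estimates of $I_{42}$ and $I_{43}$, where one lets $\rho\to\infty$). The only cosmetic caveat is that the uniform-in-$t$ constant you obtain is slightly stronger than what the cited statement promises, which is harmless.
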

             {\bf Proof of \cref{tt}\label{sec:proof}}
             By the definition of the norm and of $\tT$,
              \begin{eqnarray*}
\|\tT(u,\omega_1,\omega_2,u_0)\|_{\gamma,\rho,\sim}&\le&
             \|S_A(\cdot)u_{01}\|_{\gamma,\rho,\sim}+
\|S_B(\cdot)(u_{02}-Z(\omega_2))\|_{\gamma,\rho,\sim}\cr
&&+\|Z(\theta_{\cdot}\omega_2)\|_{\gamma,\rho,\sim}+\bigg\|\int_0^{\cdot}
             S_J(\cdot-r)
             F(u(r))\,dr\bigg\|_{\gamma,\rho,\sim}\cr
                      &&+\bigg\|\int_0^\cdot
S_A(\cdot-r)h(u_1(r))\,d\omega_1(r)\bigg\|_{\gamma,\rho,\sim}\cr
             &         =:& \mathbf{I}_1+\mathbf{I}_2+ \mathbf{I}_3+
\mathbf{I}_4+
             \mathbf{I}_5.
                  \end{eqnarray*}

                  By \cref{z}, we begin with estimate for $
             \mathbf{I}_1+\mathbf{I}_2+\mathbf{I}_3$
                  \begin{eqnarray*}
                      \mathbf{I}_1+\mathbf{I}_2+\mathbf{I}_3&
             =&\sup_{t\in[0,T]}e^{-\rho
             t}\|S_A(t)u_{01}\|+\sup_{0<s<t\le T}s^{\gamma}
             e^{-\rho
t}\frac{\|S_A(t)u_{01}-S_A(s)u_{01}\|}{(t-s)^{\gamma}}\cr
                  &&+\sup_{t\in[0,T]}e^{-\rho
t}\|S_B(t)(u_{02}-Z(\omega_2))\|+\|Z(\theta_{\cdot}\omega_2)\|_{\gamma,\rho,\sim}\cr
             &&+\sup_{0<s<t\le T}s^{\gamma}
             e^{-\rho
t}\frac{\|S_B(t)(u_{02}-Z(\omega_2))-S_B(s)(u_{02}-Z(\omega_2))\|}{(t-s)^{\gamma}}\cr
             &\le&
c_T(\|u_{01}\|+\|u_{02}\|+\|Z(\theta_{\cdot}\omega_2)\|_{\gamma})
                  \end{eqnarray*}
             where we use $\|Z(\theta_\cdot\omega_2)\|_{\gamma,\rho,\sim}\le
             c_T\|Z(\theta_\cdot\omega_2)\|_\gamma$ and Lemma \ref{z}.

                  Then, by \cite[Lemma 4, (9)]{garrido2022setvalued}, for
             $\mathbf{I}_{4}$, we have
             $$\mathbf{I}_{4} \le c_T \bar
K(\rho)(1+\|u\|_{\gamma,\rho,\sim})$$
                  where $\bar K$ has similar properties like $K$.

                  Now, we show the $\|\cdot\|_{\gamma,\rho,\sim}$-norm
of the
             stochastic integral.
                  \begin{eqnarray}\label{equ31}
                      \mathbf{I}_5&=&\sup_{0< s<t\le T} \frac{s^{\gamma}
e^{-\rho
             t}}{{(t-s)^{\gamma}}}
\bigg\|\int_s^tS_A(t-r)h(u_1(r))\,d\omega_{1}(r)\bigg\|\cr
                      &&+  \sup_{0< s<t\le T} \frac{s^{\gamma} e^{-\rho
t}}{{(t-s)^{\gamma}}}\bigg\|\int_0^s(S_A(t-r)-S_A(s-r))h(u_1(r))\,d\omega_{1}(r)\bigg\|\cr
                      &&+\sup_{t\in[0,T]}e^{-\rho
t}\bigg\|\int_0^tS_A(t-r)h(u_1(r))\,d\omega_{1}(r)\bigg\|
                      =: \mathbf{I}_{51}+\mathbf{I}_{52}+ \mathbf{I}_{53}.
                  \end{eqnarray} Since $
                      \|D_{t-}^{1-\alpha}\omega_{1,t-}[r]\|\le c \ltn
\omega_1
             \rtn_{\beta}(t-r)^{\alpha+\beta-1},$
                  by using the inequality of \eqref{semi3} and
             Remark \ref{cond-G} we get
                  \begin{eqnarray}\label{l11}
                      && s^{\gamma}  e^{-\rho
t}\bigg\|\int_s^tS_A(t-r)h(u_1(r))\,d\omega_{1}(r)\bigg\|\cr
                      &\le& c s^{\gamma} e^{-\rho t}
\int_s^t\bigg(\frac{\|S_A(t-r)\|_{L(V)}\|h(u_1(r))\|_{L_2(V)}}{(r-s)^\alpha}\cr
&&+\int_s^r\frac{\|S_A(t-r)-S_A(t-q)\|_{L(V)}\|h(u_1(r))\|_{L_2(V)}}{(r-q)^{1+\alpha}}dq\cr
&&+\int_s^r\frac{\|S_A(t-q)\|_{L(V)}\|h(u_1(r))-h(u_1(q))\|_{L_2(V)}}{(r-q)^{1+\alpha}}dq
                      \bigg)\frac{\ltn \omega_1
             \rtn_{\beta}}{(t-r)^{-\alpha-\beta+1}}dr\cr
                      & \le&  c T^{\gamma}  \ltn \omega_1 \rtn_{\beta}\bigg(
                      \int_s^t
             e^{-\rho(t-r)}\frac{(c_h+c_{Dh}|u_1(r)|)e^{-\rho
             r}}{(r-s)^{\alpha}}(t-r)^{\alpha+\beta-1}dr\cr
&&+\int_s^t\int_s^re^{-\rho(t-r)}\frac{ e^{-\rho
r}(c_h+c_{Dh}|u_1(r)|)(r-q)^{\gamma}}{(t-r)^{\gamma}(r-q)^{1+\alpha}}dq(t-r)^{\alpha+\beta-1}dr\cr
&&+\int_s^t\int_s^re^{-\rho(t-r)}\frac{ e^{-\rho
r}c_{Dh}|u_1(r)-u_1(q)|q^{\gamma}(r-q)^\gamma}{(r-q)^{1+\alpha}q^{\gamma}(r-q)^\gamma}dq(t-r)^{\alpha+\beta-1}dr\bigg)\cr
                      &\le& c T^{\gamma} \ltn \omega_1
             \rtn_{\beta}(t-s)^{\beta}
(1+\|u_1\|_{\gamma,\rho,\sim})\int_s^te^{-\rho(t-r)}(r-s)^{-\alpha}(t-r)^{\alpha-1}dr\cr
                      &&+ cT^{\gamma} \ltn \omega_1
\rtn_{\beta}(1+\|u_1\|_{\gamma,\rho,\sim})\int_s^te^{-\rho(t-r)}(r-s)^{\gamma-\alpha}(t-r)^{\alpha+\beta-1-\gamma}dr
                      \cr
                      &&+ c T^{\gamma} \ltn \omega_1
             \rtn_{\beta}(t-s)^{\beta}
\|u_1\|_{\gamma,\rho,\sim}\int_s^te^{-\rho(t-r)}(r-s)^{-\alpha}(t-r)^{\alpha-1}dr.
                  \end{eqnarray}

                  By a change of variable, $\gamma<\beta$, it is easy to
see that
                  \begin{eqnarray*}
                      &&(t-s)^{\beta}
\int_s^te^{-\rho(t-r)}(r-s)^{-\alpha}(t-r)^{\alpha-1}dr\cr
&=&(t-s)^\gamma(t-s)^{\beta-\gamma}\int_{0}^{1}e^{-\rho (t-s)
             (1-v)}v^{-\alpha}(1-v)^{\alpha-1} dv \leq (t-s)^\gamma K(\rho),
                  \end{eqnarray*}
                  taking in Lemma \ref{inq-krho} $a=-\alpha, b=\alpha-1,
             d=\beta-\gamma$ and $t-s$ as the corresponding $t$ there. The
             second integral on the right side may be rewritten in the
same way, since
                  \begin{eqnarray*}
&&\int_s^te^{-\rho(t-r)}(r-s)^{\gamma-\alpha}(t-r)^{\alpha+\beta-1-\gamma}dr\cr
                      &\leq &(t-s)^\gamma(t-s)^{\beta-\gamma}
\int_s^te^{-\rho(t-r)}(r-s)^{-\alpha}(t-r)^{\alpha-1}dr.
                  \end{eqnarray*}
                  Thus, we have
                  \begin{eqnarray}
                      \mathbf{I}_{51}\le c_TK(\rho) \ltn \omega_1
             \rtn_{\beta}  (1+\|u\|_{\gamma,\rho,\sim})
                  \end{eqnarray}

                  For $\mathbf{I}_{52}$, we should follow similar steps
than before
             when obtaining (\ref{l11}). Now we need to replace the
                  estimates for $\|S_A(t-r)\|_{L(V)}$ and
             $\|S_A(t-r)-S_A(t-q)\|_{L(V)}$ by estimates for
             $\|S_A(t-r)-S_A(s-r)\|_{L(V)}$ and
             $\|S_A(t-r)-S_A(t-q)-(S_A(s-r)-S_A(s-q))\|_{L(V)}$
respectively, for
             which we use (\ref{semi3}) and (\ref{semi4}) for
appropriate parameters.
             Then it is not hard to see that for
$\alpha^\prime+\gamma<\alpha+\beta,\,0<\alpha<\alpha^\prime<1$:
                  \begin{eqnarray*}
                      &&s^{\gamma} e^{-\rho t}
\bigg\|\int_0^s(S_A(t-r)-S_A(s-r))h(u_1(r))\,d\omega_{1}(r)\bigg\|
                      \cr
                      &\le& c(t-s)^{\gamma}\ltn \omega_1
             \rtn_{\beta}T^{\gamma}\bigg(
                      \int_0^s
             e^{-\rho(t-r)}\frac{(c_h+c_{Dh}|u_1(r)|)e^{-\rho
             r}}{r^{\alpha}(s-r)^{\gamma}}(s-r)^{\alpha+\beta-1}dr \cr
&&+\int_0^s\int_0^re^{-\rho(t-r)}\frac{ e^{-\rho
r}(c_h+c_{Dh}|u_1(r)|)(r-q)^{\alpha^\prime}}{(s-r)^{\alpha^\prime+\gamma}(r-q)^{1+\alpha}}dq(s-r)^{\alpha+\beta-1}dr
             \cr
&&+\int_0^s\int_0^re^{-\rho(t-r)}\frac{ e^{-\rho
             r}c_{Dh}|u_1(r)-u_1(q)|q^{\gamma}}{(s-r)^{\gamma}
(r-q)^{1+\alpha}q^{\gamma}}dq(s-r)^{\alpha+\beta-1}dr\bigg) \cr
                      &\le& c(t-s)^{\gamma} T^{\gamma}\ltn \omega_1
\rtn_{\beta}(1+\|u_1\|_{\gamma,\rho,\sim})\int_0^se^{-\rho(t-r)}r^{-\alpha}(s-r)^{\alpha+\beta-1-{\gamma}}dr
                      \cr
                      &&+ c(t-s)^{\gamma} T^{\gamma}\ltn \omega_1
\rtn_{\beta}(1+\|u_1\|_{\gamma,\rho,\sim})\int_0^se^{-\rho(t-r)}r^{\alpha^\prime-\alpha}(s-r)^{\alpha+\beta-1-\alpha^\prime-{\gamma}}dr
             \cr
                      &&+ c(t-s)^{\gamma} T^{\gamma}\ltn \omega_1
\rtn_{\beta}
\|u_1\|_{\gamma,\rho,\sim}\int_0^se^{-\rho(t-r)}r^{-\alpha}(s-r)^{\alpha-{\gamma}+\beta-1}dr.
                  \end{eqnarray*}
                  The third integral on the right hand side of the last
inequality
             can be estimated by
                  \begin{equation*}
                      s^{\beta-\gamma}\int_0^1e^{-\rho
             s(1-v)}v^{-\alpha}(1-v)^{\alpha-1}dv
                  \end{equation*}
                  and in a similar manner the other integrals. We have
                  \begin{eqnarray*}
                      \mathbf{I}_{52}\le c_TK(\rho) \ltn \omega_1
             \rtn_{\beta} (1+\|u_1\|_{\gamma,\rho,\sim})\le
             c_TK(\rho) \ltn \omega_1
             \rtn_{\beta}  (1+\|u\|_{\gamma,\rho,\sim}).
                  \end{eqnarray*}

                  In a similar manner than before for the first
expression on
             $\mathbf{I}_{51}$
                  we obtain
                  \begin{equation*}
                      \mathbf{I}_{53}  \le c_T\ltn \omega_1
             \rtn_{\beta}K(\rho)(1+\|u\|_{\gamma,\rho,\sim}).
                  \end{equation*}

                  All the previous estimates imply that
                  \begin{equation*}
                      \mathbf{I}_{5}  \le c_T\ltn \omega_1
             \rtn_{\beta}K(\rho)(1+\|u\|_{\gamma,\rho,\sim}).
                  \end{equation*}

             Collecting all the above estimates we have a constant
             $C(\rho,\omega_1,T)>0$ such that
             $\lim_{\rho\to\infty}C(\rho,\omega_1,T)=0$
             and
$$\|\tT(u,\omega_1,\omega_2,u_0)\|_{\gamma,\rho,\sim}\le
c_T(\|u_{01}\|+\|u_{02}\|+\|Z(\theta_{\cdot}\omega_2)\|_\gamma)+C(\rho,\omega_1,T)(1+\|u\|_{\gamma,\rho,\sim}).$$
\bibliographystyle{siam}
\bibliography{references}

\begin{thebibliography}{10}

\bibitem{arnold1998random}
{\sc L.~Arnold}, {\em Random Dynamical Systems}, Springer, 1998.

\bibitem{arnold2001hasselmann}
\leavevmode\vrule height 2pt depth -1.6pt width 23pt, {\em Hasselmann’s
  Program Revisited: The Analysis of Stochasticity in Deterministic Climate
  Models}, Springer, 2001.

\bibitem{bogolyubov1955asymptotic}
{\sc N.~Bogolyubov and Y.~Mitropolskii}, {\em Asymptotic Methods in the Theory
  of Nonlinear Oscillations}, Moscow: Gostekhteorizdat, 1955.

\bibitem{chen2013pathwise}
{\sc Y.~Chen, H.~Gao, M.~Garrido-Atienza, and B.~Schmalfuss}, {\em Pathwise
  solutions of {SPDEs} driven by {H}{\"o}lder-continuous integrators with
  exponent larger than $1/2$ and random dynamical systems}, Discrete and
  Continuous Dynamical Systems, 34 (2014), pp.~79--98.

\bibitem{cheridito2003fractional}
{\sc P.~Cheridito, H.~Kawaguchi, and M.~Maejima}, {\em Fractional
  {Ornstein-Uhlenbeck} processes}, Electronic Journal of Probability, 8 (2003),
  pp.~1--14.

\bibitem{chueshov2005averaging}
{\sc I.~Chueshov and B.~Schmalfu{\ss}}, {\em Averaging of attractors and
  inertial manifolds for parabolic {PDE} with random coefficients}, Advanced
  Nonlinear Studies, 5 (2005), pp.~461--492.

\bibitem{chueshov2020synchronization}
{\sc I.~Chueshov and B.~Schmalfu{\ss}}, {\em Synchronization in
  Infinite-Dimensional Deterministic and Stochastic Systems}, Springer, 2020.

\bibitem{coddington1995theory}
{\sc E.~Coddington and N.~Levinson}, {\em Theory of Ordinary Differential
  Equations}, McGraw Hill, 1955.

\bibitem{duan2014effective}
{\sc J.~Duan and W.~Wang}, {\em Effective Dynamics of Stochastic Partial
  Differential equations}, Elsevier, 2014.

\bibitem{eichinger2020sample}
{\sc K.~Eichinger, C.~Kuehn, and A.~Neam{\c{t}}u}, {\em Sample paths estimates
  for stochastic fast-slow systems driven by fractional {B}rownian motion},
  Journal of Statistical Physics, 179 (2020), pp.~1222--1266.

\bibitem{engel2021homogenization}
{\sc M.~Engel, M.~A. Gkogkas, and C.~Kuehn}, {\em Homogenization of coupled
  fast-slow systems via intermediate stochastic regularization}, Journal of
  Statistical Physics, 183 (2021), p.~25.

\bibitem{freidlin2012random}
{\sc M.~Freidlin and A.~Wentzell}, {\em Random Perturbations of Dynamical
  Systems}, Springer, 2012.

\bibitem{garrido2010random}
{\sc M.~Garrido-Atienza, K.~Lu, and B.~Schmalfuss}, {\em Random dynamical
  systems for stochastic partial differentialequations driven by a fractional
  {B}rownian motion}, Discrete and Continuous Dynamical Systems-B, 14 (2010),
  pp.~473--493.

\bibitem{garrido2010unstable}
{\sc M.~Garrido-Atienza, K.~Lu, and B.~Schmalfu{\ss}}, {\em Unstable invariant
  manifolds for stochastic {PDEs} driven by a fractional {B}rownian motion},
  Journal of Differential Equations, 248 (2010), pp.~1637--1667.

\bibitem{garrido2022setvalued}
{\sc M.~Garrido-Atienza, B.~Schmalfuss, and J.~Valero}, {\em Setvalued
  dynamical systems for stochastic evolution equations driven by fractional
  noise}, Journal of Dynamics and Differential Equations, 34 (2022),
  pp.~79--105.

\bibitem{givon2007strong}
{\sc D.~Givon}, {\em Strong convergence rate for two-time-scale jump-diffusion
  stochastic differential systems}, Multiscale Modeling \& Simulation, 6
  (2007), pp.~577--594.

\bibitem{hairer2005ergodicity}
{\sc M.~Hairer}, {\em {Ergodicity of stochastic differential equations driven
  by fractional {B}rownian motion}}, The Annals of Probability, 33 (2005),
  pp.~703--758.

\bibitem{hairer2019averaging}
{\sc M.~Hairer and X.-M. Li}, {\em {Averaging dynamics driven by fractional
  {B}rownian motion}}, The Annals of Probability, 48 (2020), pp.~1826--1860.

\bibitem{hasselmann1976stochastic}
{\sc K.~Hasselmann}, {\em Stochastic climate models part {I}. theory}, Tellus,
  28 (1976), pp.~473--485.

\bibitem{khasminskii1968on}
{\sc R.~Khasminskii}, {\em {On an averaging principle for It\^{o} stochastic
  differential equations}}, Kibernetica, 4 (1968), pp.~260--279.

\bibitem{kifer2001averaging}
{\sc Y.~Kifer}, {\em Averaging and climate models}, Progress in Probability, 49
  (2001), pp.~171--188.

\bibitem{kuehn2015multiple}
{\sc C.~Kuehn}, {\em Multiple Time Scale Dynamics}, Springer Cham, 2015.

\bibitem{kunita1990stochastic}
{\sc H.~Kunita}, {\em Stochastic Flows and Stochastic Differential Equations},
  Cambridge University Press, 1990.

\bibitem{li2022mild}
{\sc X.-M. Li and J.~Sieber}, {\em Mild stochastic sewing lemma, {SPDE} in
  random environment, and fractional averaging}, Stochastics and Dynamics, 22
  (2022), p.~2240025.

\bibitem{li2022slow}
\leavevmode\vrule height 2pt depth -1.6pt width 23pt, {\em Slow-fast systems
  with fractional environment and dynamics}, The Annals of Applied Probability,
  32 (2022), pp.~3964--4003.

\bibitem{liu2020averaging}
{\sc W.~Liu, M.~R{\"o}ckner, X.~Sun, and Y.~Xie}, {\em Averaging principle for
  slow-fast stochastic differential equations with time dependent locally
  lipschitz coefficients}, Journal of Differential Equations, 268 (2020),
  pp.~2910--2948.

\bibitem{lunardi2012analytic}
{\sc A.~Lunardi}, {\em Analytic Semigroups and Optimal Regularity in Parabolic
  Problems}, Springer Science \& Business Media, 2012.

\bibitem{maslowski2003evolution}
{\sc B.~Maslowski and D.~Nualart}, {\em Evolution equations driven by a
  fractional {B}rownian motion}, Journal of Functional Analysis, 202 (2003),
  pp.~277--305.

\bibitem{alexandra2021rough}
{\sc A.~Neam\c{t}u and C.~Kuehn}, {\em Rough center manifolds}, SIAM Journal on
  Mathematical Analysis, 53 (2021), pp.~3912--3957.

\bibitem{pavliotis2008multiscale}
{\sc G.~Pavliotis and A.~Stuart}, {\em Multiscale Methods: Averaging and
  Homogenization}, Springer Science \& Business Media, 2008.

\bibitem{pazy2012semigroups}
{\sc A.~Pazy}, {\em Semigroups of Linear Operators and Applications to Partial
  Differential Equations}, Springer Science \& Business Media, 2012.

\bibitem{pei2020averaging}
{\sc B.~Pei, Y.~Inahama, and Y.~Xu}, {\em Averaging principles for mixed
  fast-slow systems driven by fractional {B}rownian motion}, arXiv preprint
  arXiv:2001.06945,  (2020).

\bibitem{pei2020pathwise}
\leavevmode\vrule height 2pt depth -1.6pt width 23pt, {\em Pathwise unique
  solutions and stochastic averaging for mixed stochastic partial differential
  equations driven by fractional {B}rownian motion and {B}rownian motion},
  arXiv preprint arXiv:2004.05305,  (2020).

\bibitem{pei2021averaging}
\leavevmode\vrule height 2pt depth -1.6pt width 23pt, {\em Averaging principle
  for fast-slow system driven by mixed fractional {B}rownian rough path},
  Journal of Differential Equations, 301 (2021), pp.~202--235.

\bibitem{pei2017stochastic}
{\sc B.~Pei, Y.~Xu, and G.~Yin}, {\em Stochastic averaging for a class of
  two-time-scale systems of stochastic partial differential equations},
  Nonlinear Analysis, 160 (2017), pp.~159--176.

\bibitem{samko1993fractional}
{\sc S.~Samko, A.~Kilbas, and O.~Marichev}, {\em Fractional Integrals and
  Derivatives: Theory and Applications}, Gordon and Breach, Switzerland, 1993.

\bibitem{volosov1962averaging}
{\sc V.~Volosov}, {\em Averaging in systems of ordinary differential
  equations}, Russian Mathematical Surveys, 17 (1962), pp.~1--126.

\bibitem{xu2011averaging}
{\sc Y.~Xu, J.~Duan, and W.~Xu}, {\em An averaging principle for stochastic
  dynamical systems with {L}{\'e}vy noise}, Physica D: Nonlinear Phenomena, 240
  (2011), pp.~1395--1401.

\bibitem{xu2015stochastic}
{\sc Y.~Xu, B.~Pei, and R.~Guo}, {\em Stochastic averaging for slow-fast
  dynamical systems with fractional {B}rownian motion}, Discrete and Continuous
  Dynamical Systems-B, 20 (2015), pp.~2257--2267.

\bibitem{xu2017stochastic}
{\sc Y.~Xu, B.~Pei, and J.-L. Wu}, {\em Stochastic averaging principle for
  differential equations with non-lipschitz coefficients driven by fractional
  {B}rownian motion}, Stochastics and Dynamics, 17 (2017), p.~1750013.

\bibitem{zahle1998integration}
{\sc M.~Z{\"a}hle}, {\em Integration with respect to fractal functions and
  stochastic calculus. {I}}, Probability Theory and Related Fields, 111 (1998),
  pp.~333--374.

\end{thebibliography}
             \end{document}